\providecommand{\U}[1]{\protect\rule{.1in}{.1in}}
\newtheorem{thm}{Theorem}[section]
\newtheorem{proposition}{Proposition}[section]
\newtheorem{corollary}{Corollary}[section]
\newtheorem{lemma}{Lemma}[section]
\newtheorem{definition}{Definition}[section]
\newtheorem{hypothesis}{Hypothesis}[section]
\def\mE{{\mathbb E}}
\def\eps{\varepsilon}
\def\]{{\Big]}}
\def\[{{\Big[}}
\def\dif{{\mathord{{\rm d}}}}
\def\bd{\begin{definition}}
\def\ed{\end{definition}}
\def\bp{\begin{proposition}}
\def\ep{\end{proposition}}
\def\bc{\begin{corollary}}
\def\ec{\end{corollary}}
\def\bx{\begin{Examples}}
\def\ex{\end{Examples}}
\def\cA{{\mathcal A}}
\def\cB{{\mathcal B}}
\def\cC{{\mathcal C}}
\def\cD{{\mathcal D}}
\def\cE{{\mathcal E}}
\def\cF{{\mathcal F}}
\def\cJ{{\mathcal J}}
\def\cK{{\mathcal K}}
\def\cL{{\mathcal L}}
\def\cM{{\mathcal M}}
\def\cN{{\mathcal N}}
\def\cQ{{\mathcal Q}}
\def\cR{{\mathcal R}}
\def\cS{{\mathcal S}}
\def\cX{{\mathcal X}}
\def\cZ{{\mathcal Z}}
\def\mE{{\mathbb E}}
\def\mI{{\mathbb I}}
\def\mN{{\mathbb N}}
\def\mP{{\mathbb P}}
\def\mR{{\mathbb R}}
\def\mT{{\mathbb T}}
\def\mZ{{\mathbb Z}}
\def\ba{{\begin{align}}
\def\ea{\end{align}}}
\def\geq{\geqslant}
\def\leq{\leqslant}
\def\^{\widehat}
\def\ba{\begin{aligned}}
\def\ea{\end{aligned}}
\def\be{\begin{equation}}
\def\ee{\end{equation}}
\def\ben{\begin{align*}}
\def\enn{\end{align*}}
\newcommand{\Rmnum}[1]{\expandafter\@slowromancap\romannumeral #1@}
\numberwithin{equation}{section}
\title{\bf{Ergodicity and exponential mixing of the real Ginzburg-Landau equation with a degenerate noise$^*$}}
\author{ Xuhui Peng$^\dag$, Jianhua Huang$^\spadesuit$, Rangrang Zhang$^\clubsuit$\\
{\em\small $^\dag$MOE-LCSM, School of mathematics and statistics,  Hunan Normal University}\\
{\em\small Changsha  410081, P.R.China} \\
{\em\small $^\spadesuit$College of Science, National  University of Defense Technology  }\\
{\em\small  Changsha, {\rm 410073}, P.R.China}\\
{\em\small $^\clubsuit$School of  Mathematics and Statistics,
Beijing Institute of Technology, Beijing, 100081, China. }\\
 }
\date{}
\begin{document}

\maketitle

\let\thefootnote\relax\footnotetext{
$^*$The first author was supported by  Hunan Provincial Natural Science Foundation of
China (No.2019JJ50377), NSFC (No.11871476) 
 and  the Construct Program of the
Key Discipline in Hunan Province,
the second author was supported by NSFC (No.11371367, 11771449),the third author was supported by NSFC (No. 11801032), Key Laboratory of Random Complex Structures and Data Science, Academy of Mathematics and Systems Science, Chinese Academy of Sciences (No. 2008DP173182), China Postdoctoral Science Foundation funded project (No. 2018M641204).
}
\let\thefootnote\relax\footnotetext{$^\clubsuit$Corresponding author.}
\let\thefootnote\relax\footnotetext{Email addresses:xhpeng@hunnu.edu.cn(X.Peng),jhhuang32@nudt.edu.cn(J.Huang),rrzhang@amss.ac.cn(R.Zhang)}
%


\begin{abstract}
In this paper, we establish the existence, uniqueness and attraction properties of an invariant measure for the real Ginzburg-Landau equation  in the presence of a degenerate stochastic forcing acting only in four directions. The main  challenge is to establish time asymptotic smoothing properties of the Markovian dynamics corresponding to this system. To achieve this,  we propose  a condition which only requires  four     noises.

\vskip0.5cm\noindent{\bf Keywords:} exponential mixing; Malliavin calculus; ergodic;
 real Ginzburg-Landau equation.
\vspace{1mm}\\
\noindent{{\bf MSC 2000:} 60H15; 60H07}
\end{abstract}
\section{Introduction and     Main Results }

\subsection{Introduction }
In this paper, we  are concerned with   the ergodicity of the stochastic real Ginzburg-Landau equation driven by Brownian motion on torus $\mT=\mR/{2\pi \mZ}$ as follows
\begin{eqnarray}\label{p-1}
\left\{
\begin{split}
  & \dif U-\frac{\partial^2U}{\partial z^2}\dif t-(U-U^3)\dif t=\sum_{k\in \cZ_0}\beta_ke_k\dif W_k(t),
  \\ &  U|_{t=0}=U_0,
  \end{split}
  \right.
\end{eqnarray}
where $U:[0,\infty)\times \mT\rightarrow \mR,$ $\cZ_0$ is a subset of $\mZ_*=\mZ\setminus \{0\},$
$\{\beta_k\}_{k\in \cZ_0}$ are non-zero constants,  $\{W_k(t)\}_{k\in\mZ}$ is one  dimensional real-valued i.i.d Brownian motion sequence defined on a filtered probability space $(\Omega,\cF,\{\cF_t\},\mP)$ and
  \begin{eqnarray*}
   e_k(z)=
   \left\{
   \begin{split}
     & \sin(kz), \quad k\in \mZ\cap [1,\infty), z\in\mT,
     \\
     & \cos(kz), \quad k\in \mZ\cap (-\infty,-1], z\in \mT.
   \end{split}
   \right.
 \end{eqnarray*}

Consider the following abstract equation on a Hilbert space $H,$
\begin{eqnarray*}
  \dif U=F(U)\dif t+G\dif W_t,\quad U|_{t=0}=U_0.
\end{eqnarray*}
There is a wide literature devoted to proving  uniqueness and associated mixing properties of invariant measures for nonlinear stochastic PDEs when $GG^*$ is non-degenerate or  mildly degenerate (see e.g. \cite{E-2000,KS,MY-2002,PZ-92,PZ-96} and  references therein).

The purpose of this paper is to  prove the exponential mixing for   stochastic real Ginzburg-Landau equation (\ref{p-1})   when  the random  forcing is extremely degenerate to be several noises.
 There are several  works  related to  this topic    when  the random  forcing is extremely degenerate.  
 We  mention some of them   which  are relevant	 to our work.

 \begin{itemize}
   \item Hairer and  Mattingly \cite{martin,Hairer02}  considered   stochastic 2D Navier-Stokes equations on a torus driven by degenerate additive noise.
They   established   an exponential mixing property  of the solution  of
the  vorticity formulation for the  2D stochastic  Navier-Stokes equations  by using Malliavin calculus, although the noise  is extremely degenerate ( the noise only acts in four directions).
   \item
  F\"oldes   et al. \cite{FGRT} was interested in  the  following  stochastic   Boussinesq equations
  \begin{eqnarray}\label{p-41}
  \left\{
  \begin{split}
   &  \dif u+(u \cdot \nabla u)\dif t=(-\nabla p+\nu_1 \Delta u+\textbf{g}\theta )\dif t, ~~~\nabla \cdot u=0
    \\  & \dif \theta+(u\cdot \nabla \theta)\dif t=\nu_2 \Delta \theta \dif t +\sigma_\theta \dif W,
    \end{split}
    \right.
  \end{eqnarray}
 where $u=(u_1,u_2)$ denotes the velocity field, $\theta$  is the   temperature, $\textbf{g}=(0,g)^T$ with $g\neq 0$ is a constant. The authors worked on
 the  vorticity equations of  (\ref{p-41}), which is given by
 \begin{eqnarray}\label{p-44}
\left\{
\begin{split}
 &  \dif \omega+(u\cdot \nabla \omega-\nu_1\Delta \omega)=g\partial_x\theta\dif t,
 \\
 &
 \dif \theta+(u\cdot \nabla \theta-\nu_2\Delta \theta)=\sigma_\theta \dif W.
  \end{split}
  \right.
\end{eqnarray}
Although the forcing is extremely degenerate(only four directions in $\theta$ have noise), the authors succeed to  establish  an exponential mixing property for the solution  of
 equation (\ref{p-44}) by utilizing Malliavin calculus.
 \end{itemize}

As stated above, all the authors in  \cite{FGRT,martin} established an exponential mixing property for the solution  of vorticity equation instead of velocity equation. For our model, we can directly deal with the velocity equation (\ref{p-1}) due to its special structure.
Let   $U_t$ be the solution to equations  (\ref{p-41}) or (\ref{p-44}) and  $\cJ_{0,t}\xi=DU_t(x)\xi$ be  the effect on
$U_t$ of an infinitesimal perturbation of the initial condition in
the direction $\xi$.   The authors of \cite{FGRT,martin} considered  the vorticity formulation in order to obtain $\mE \|\cJ_{0,t}\xi\|^p<\infty$.  For equation  (\ref{p-1}), we can directly achieve it .

For the stochastic real Ginzburg-Landau equation, we mention   the following results.
 \begin{itemize}
   \item For  the stochastic real Ginzburg-Landau equation driven by Brownian motion,
   Hairer \cite[Section 6]{Hairer-2001}
    established  an exponential mixing of the solution  to (\ref{p-1}) under the condition that the number of   noises can be finite but should be sufficiently many. Our results in this article are stronger than that. Meanwhile,  the random  forcing of our model can  be  extremely degenerate to be only several  noises.
   \item  Xu \cite{Xu-2012} proved that  the stochastic real Ginzburg-Landau equation driven by $\alpha$-stable  process   admits a  unique invariant measure under some conditions.  The noise in \cite{Xu-2012} is required to be non-degenerate.
   \item  Mourrat and Weber \cite{MW17} established   a priori estimates for  the dynamic $\Phi_3^4$ model on the torus which is independent of initial conditions. The $\Phi_3^4$ model
     is formally given by the stochastic partial differential equation
     \begin{eqnarray*}
     \left\{
       \begin{split}
         &\partial_t X=\Delta X-X^3+mX+\xi, \quad \text{on }\mR_+\times [-1,1]^3,
         \\ &X(0,\cdot)=X_0
       \end{split}
       \right.
     \end{eqnarray*}
     where $\xi$ denotes a white noise on $\mR_+\times [-1,1]^3$, and $m\in \mR$ is a parameter.
  \end{itemize}

 \subsection{Main results}

Let $\mT=\mR/{2\pi \mZ}$ be equipped with the usual Riemannian metric, and let $\dif z$ denote the Lebesgue measure on $\mT$. Then
\begin{eqnarray*}
  H:=\left\{\xi\in L^2(\mT,\mR); \int_{\mT}\xi(z)\dif z=0\right\}
\end{eqnarray*}
is a separable real Hilbert space with inner product
 \begin{eqnarray*}
 \langle \xi,\eta \rangle =\int_{\mT}\xi(z)\eta(z)\dif z,\quad \quad \forall \xi,\eta\in H
 \end{eqnarray*}
 and norm $\|\xi\| =\langle \xi,\xi \rangle^{1/2}. $

  It is well-known that
 \begin{eqnarray*}
   \{e_k:k\in\mZ_*\}
 \end{eqnarray*}
 is an orthonormal basis of $H$.
 For each $x\in H$, it can be represented by
 \begin{eqnarray*}
   x=\sum_{k\in \mZ_*}x_ke_k.
 \end{eqnarray*}

  Let $\Delta=\frac{\partial^2}{\partial z^2}$ be the Laplace operator on $H$, then
 \begin{eqnarray}
 \label{zhang-1}
   \Delta e_k=-\gamma_k e_k,  \text{ with }  k\in\mZ_*,\gamma_k=|k|^2.
 \end{eqnarray}
 For $\sigma>0,$ we define
 \begin{eqnarray*}
   A&:=&-\Delta,
   \\ H^\sigma= H^{\sigma,2}(\mT)&:=&\left\{x\in H:~x=\sum_{k\in\mZ_*}x_ke_k\text{ with } \|x\|_{H^\sigma}^2:=\sum_{k\in Z_*}(1+ \gamma_k )^{\sigma }|x_k|^2<\infty\right\},
   \\ D(A^{\sigma})&:=&\left\{x\in H:~x=\sum_{k\in\mZ_*}x_ke_k \text{ with } \|x\|^2_{D(A^\sigma)}:=\sum_{k\in Z_*}|\gamma_k|^{2\sigma}|x_k|^2<\infty \right\},
   \\ V^\sigma&:=&\left\{x\in D(A^{\sigma/2}),\text{ with } \|x\|_\sigma:=\|x\|_{D(A^{\sigma/2})}<\infty\right\}.
 \end{eqnarray*}
For $\sigma>0,$ we denote by  $H^{-\sigma}$ the dual space of $H^{\sigma}.$
For the sake of convenience,  we  denote by  $V=V^1$.

Set $
  N(U)=-U+U^3
$
and
\begin{eqnarray*}
  F(U)=-A U-N(U)=\Delta U+U-U^3.
\end{eqnarray*}
Let $\{\theta_k\}_{k\in \cZ_0}$ be the standard basis of $\mR^{|\cZ_0|}$,   where $|\cZ_0|$ denotes the number of the element belongs  to the set $\cZ_0.$
We define a linear map $G:\mR^{|\cZ_0|}\rightarrow H$ such that
\begin{eqnarray}\label{equu-1}
  G\theta_k=\beta_ke_k,
\end{eqnarray}
where  $\{\beta_k\}_{k\in\cZ_0}$ is  a sequence of non-zero numbers appeared in (\ref{p-1}).
We consider the stochastic forcing of the form
\begin{eqnarray*}
G \dif W_t=\sum_{k\in  \cZ_0 }\beta_{k}e_k \dif W_{k}(t),
\end{eqnarray*}
then  (\ref{p-1}) can be written as
\begin{eqnarray*}
  \dif U=F(U)\dif t+G
\dif W, \quad U|_{t=0}=U_0.
\end{eqnarray*}
For any $n\geq 1,$ we define $\cZ_n$ recursively as follows:
\begin{eqnarray}\label{pp-10}
  \cZ_n:=\{k+\ell+m:~ k\in \cZ_{n-1},\ell,m\in\cZ_0\}.
\end{eqnarray}

Our Hypothesis in this article is
\begin{hypothesis}
\label{pp-2}
\

\begin{itemize}
  \item[({\romannumeral1}) ]
  if $k\in\cZ_0$, then $-k\in \cZ_0$,
  \item[({\romannumeral2}) ]
  $\cup_{n=0}^\infty\cZ_n=\mZ_*.$
  \item[({\romannumeral3}) ]
 $ |\cZ_0|<\infty.$
\end{itemize}
\end{hypothesis}

To measure the convergence to equilibrium, we will use the following distance
 function on $H$
 \begin{eqnarray}
 \label{5-1}
   d(x,y)=1\wedge \delta^{-1}\|x-y\|.
 \end{eqnarray}
 where $\delta$ is a small parameter to be adjusted later on. The distance (\ref{5-1}) extends in a
natural way to a Wasserstein distance between probability measures by
\begin{eqnarray*}
  d(\mu_1,\mu_2)=\sup_{\|\Phi\|_{d}\leq 1}\left|\int_{H}\Phi(x)\mu(\dif x)-\int_{H}\Phi(x)\nu(\dif x)\right|
\end{eqnarray*}
 where $\|\Phi\|_d$ denotes the Lipschitz constant of $\Phi$ in the metric $d$.

The transition function associated to (\ref{p-1}) is given by
\begin{eqnarray}
\label{zhang-1}
  P_t(U_0,E)=\mP(U(t,U_0)\in E) \text{ for any } U_0 \in H, E\in \cB(H), t\geq 0,
\end{eqnarray}
where $\cB(H)$ is the collection of Borel sets on $H$, $U(t,U_0)$ is the solution to equations (\ref{p-1}) with initial value $U_0\in H$.
We also define the Markov semigroup $\{P_t\}_{t\geq 0}$ with $P_t:M_b(H)\rightarrow M_b(H)$ associated to (\ref{p-1}) by
\begin{eqnarray}\label{p-26}
  P_t \Phi (U_0) :=\mE \Phi(U(t,U_0))=\int_H \Phi(\bar{U})P_t(U_0,\dif \bar{U}) \text{ for any } \Phi \in M_b(H), t\geq 0,
\end{eqnarray}
where  $M_b(H)$ is  the space of bounded measurable  functions on $H$ equipped with supremum norm.
Denote by  $C_b(H)$  the space of   bounded continuous real-valued functions on $H$.
Let $Pr(H)$  be the collection of Borelian probability measures on $H.$ The dual operator $P_t^*$ of $P_t$, which maps $Pr(H)$ to itself, is given by
\begin{eqnarray}
\label{e-1}
  P_t^*\mu(A):=\int_H P_{t}(U_0,A)\dif \mu(U_0),
\end{eqnarray}
over $\mu\in Pr(H).$

Now we will give our main results in this paper.
\begin{thm}\label{p-27}
Assume Hypothesis  \ref{pp-2} holds, then there exists a unique invariant measure $\mu_*$ associated to (\ref{p-1}) and for each $t\geq 0$ the map $P_t$ is ergodic related to  $\mu_*$.
Concretely, the following results
hold.
\begin{itemize}
  \item[({\romannumeral1}) ] (Exponential Mixing) There are constants     $\delta>0$ and  $\gamma>0$  such that
  \begin{eqnarray}\label{p-31}
    \sup_{\|\Phi\|_d\leq 1}\left| \mE \Phi(U(t,U_0))-\int_H \Phi(\bar{U})\dif \mu_*(\bar{U}) \right|
    \leq  Ce^{-\gamma t},
  \end{eqnarray}
  where $C$ is a constant independent of $U_0$ and $t.$
   \item[({\romannumeral2}) ] (Weak law of large numbers)
   For the  $\delta>0$ in ({\romannumeral1}),  any $\Phi$ with $ \|\Phi\|_d\leq 1$ and any $U_0\in H$, we  have
   \begin{eqnarray}\label{p-35}
     \lim_{T\rightarrow \infty}\frac{1}{T}\int_0^T \Phi(U(t,U_0))\dif t=\int_H \Phi(\bar{U})\dif \mu_*(\bar{U})=:m_{\Phi} \quad \text{  in probability.}
   \end{eqnarray}
  \item[({\romannumeral3}) ] (Central limit theorem) For the $\delta>0$ in ({\romannumeral1}),  any $\Phi$ with $ \|\Phi\|_d\leq 1$,  every $U_0\in H$ and  $\xi\in \mR$, we have
      \begin{eqnarray}\label{p-36}
        \lim_{T\rightarrow \infty}\mP\left(\frac{1}{\sqrt{T}}\int_0^T (\Phi(U(t,U_0))-m_{\Phi})\dif t<\xi\right)=\cX(\xi),
      \end{eqnarray}
      where $\cX$ is the distribution function of a normal random variable whose mean is equal to zero and variance is equal to
      \begin{eqnarray*}
          \lim_{T\rightarrow \infty}\frac{1}{T}\mE \left(\int_0^T (\Phi(U(t,U_0))-m_{\Phi}) \dif t\right)^2.
      \end{eqnarray*}
\end{itemize}
\end{thm}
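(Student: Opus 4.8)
The plan is to follow the Hörmander-type programme for semilinear SPDEs with degenerate additive forcing developed by Hairer and Mattingly \cite{martin,FGRT}, adapting it to the cubic structure of \eqref{p-1}. First I would record well-posedness and uniform-in-time a priori bounds. Testing \eqref{p-1} against $U$ and using $\langle F(U),U\rangle = -\|U\|_V^2 + \|U\|^2 - \|U\|_{L^4}^4$, Young's inequality absorbs the quadratic term into the quartic one, so that $\langle F(U),U\rangle \le -\|U\|_V^2 + C \le -\|U\|^2 + C$ by the Poincar\'e inequality (recall $\gamma_k = |k|^2 \ge 1$). This yields a dissipative Lyapunov structure giving uniform bounds on $\mE\|U(t)\|^2$, on higher moments, and on exponential moments $\mE\exp(\eta\|U(t)\|^2)$. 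Tightness of the time averages $\frac1T\int_0^T P_t^*\delta_{U_0}\,dt$ then produces, via Krylov--Bogoliubov, at least one invariant measure $\mu_*$. A decisive simplification over \cite{FGRT,martin} is that the linearization $\cJ_{0,t}\xi$ solves $\partial_t v = \Delta v + v - 3U^2 v$, and since $\langle -3U^2 v, v\rangle \le 0$ the cubic term is dissipative for the Jacobian; hence $\mE\|\cJ_{0,t}\xi\|^p<\infty$ (and indeed exponential decay in $t$) follows directly at the velocity level, with no need for a vorticity formulation.

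The crux of the argument, and the step I expect to be the main obstacle, is the time-asymptotic smoothing of the semigroup, namely the gradient estimate
\begin{eqnarray*}
\|\nabla P_t\Phi(U_0)\| \leq C(U_0)\left(\sqrt{P_t|\Phi|^2(U_0)} + e^{-\gamma t}\sqrt{P_t\|\nabla\Phi\|^2(U_0)}\right),
\end{eqnarray*}
which is precisely the asymptotic strong Feller property of \cite{martin}. I would establish it by Malliavin calculus: introduce the Malliavin matrix $\cM_t=\int_0^t \cJ_{s,t}GG^*\cJ_{s,t}^*\,ds$ and build an approximate control that steers the Malliavin derivative so that the residual between the true derivative flow and the Malliavin-driven one is exponentially small in $t$. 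The algebraic input is a Hörmander-type spanning condition generated by the cubic nonlinearity: the relevant iterated brackets involve products $e_\ell e_m e_k$, whose Fourier support consists of frequencies of the form $k+\ell+m$, which is exactly the recursion \eqref{pp-10} defining $\cZ_n$. Hypothesis \ref{pp-2}(ii), $\cup_n \cZ_n=\mZ_*$, guarantees that iterated brackets eventually reach every mode, while (i) and (iii) keep the generating family symmetric and finite. The delicate analytic point is to turn this spanning into a quantitative lower bound on $\cM_t$ on low modes that is strong enough to dominate the polynomial-in-$U$ growth coming from the $U^2$ coefficients in the high-mode estimates; controlling these triple-product terms is where most of the technical work lies.

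Granting the asymptotic strong Feller property, uniqueness of $\mu_*$ --- and therefore ergodicity of each $P_t$ --- follows from the abstract criterion of \cite{martin} once an accessibility property is verified, namely that a single point (here the origin, reachable because the forced low modes span a neighbourhood and the dissipation drives the system toward $0$) lies in the support of every invariant measure. To upgrade uniqueness to the exponential mixing bound \eqref{p-31} in the Wasserstein distance induced by $d(x,y)=1\wedge\delta^{-1}\|x-y\|$, I would combine the exponential Lyapunov bound of the first step with the gradient estimate through a weak Harris / generalized-coupling argument: the Lyapunov function confines trajectories to large balls, on which the smoothing estimate provides, for a suitably small $\delta$ and a fixed time $t_*$, a strict $d$-contraction of $P_{t_*}^*$; iterating gives $d(P_t^*\mu_1,P_t^*\mu_2)\le Ce^{-\gamma t}\,d(\mu_1,\mu_2)$, and taking $\mu_2=\mu_*$ yields \eqref{p-31} uniformly in $U_0$.

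Finally, parts (ii) and (iii) are standard consequences of the exponential decay of correlations encoded in (i). Integrating the mixing bound $|\mathrm{Cov}(\Phi(U_s),\Phi(U_t))|\le Ce^{-\gamma|t-s|}$ over $[0,T]^2$ shows that the variance of $\frac1T\int_0^T\Phi(U_t)\,dt$ is $O(1/T)$, and Chebyshev's inequality then gives the weak law of large numbers \eqref{p-35}. For the central limit theorem \eqref{p-36} I would solve the Poisson equation $\cL g=\Phi-m_\Phi$ by setting $g=\int_0^\infty (P_t\Phi-m_\Phi)\,dt$, which converges thanks to the exponential mixing; the It\^o formula applied to $g(U_t)$ produces a martingale-plus-remainder decomposition of $\int_0^T(\Phi(U_t)-m_\Phi)\,dt$, and the martingale central limit theorem yields \eqref{p-36} with the asserted asymptotic variance $\lim_{T\to\infty}\frac1T\mE(\int_0^T(\Phi(U_t)-m_\Phi)\,dt)^2$.
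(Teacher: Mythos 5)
Your core analytic programme coincides with the paper's: pathwise contraction of the Jacobian at the velocity level (Lemma \ref{p-29}), Malliavin-matrix lower bounds generated by the bracket recursion (\ref{pp-10}) (Theorem \ref{p-9}), a gradient estimate for $P_t$ (Proposition \ref{p-28}), irreducibility, and abstract machinery to conclude. The paper packages the final step through \cite[Theorem 2.5]{Hairer02} (Theorem \ref{i-1}) and \cite[Theorem 2.1]{KW} (Theorem \ref{s-3}), where you instead propose the Hairer--Mattingly uniqueness criterion, a weak Harris argument, and direct covariance/martingale-approximation proofs of (ii) and (iii); those are interchangeable packagings. However, there is one genuine gap.

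The theorem asserts --- and the paper stresses as one of its main contributions --- that the constant $C$ in (\ref{p-31}) is independent of $U_0$, and your argument as written cannot deliver this. In your first step you apply Young's inequality to absorb the quadratic term into the quartic one, retaining only $\langle F(U),U\rangle\le -\|U\|_1^2+C$. That Lyapunov structure gives $\mE\|U_t\|^2\le e^{-2t}\|U_0\|^2+C$, so from a large initial condition the system needs time of order $\log\|U_0\|$ to reach a fixed ball; consequently the fixed-time overlap condition (\ref{r-2}) of Theorem \ref{i-1} --- which must hold for every pair $U_0,\widetilde U_0\in H$ with a single constant $a>0$ --- is not uniform over $H$, and the weak Harris argument you invoke yields contraction only in a Lyapunov-weighted metric, i.e.\ a mixing bound of the form $C(1+\|U_0\|^2)e^{-\gamma t}$ rather than the claimed uniform one. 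The missing idea is exactly what the discarded quartic term supplies: since $\|U\|^2\le (2\pi)^{1/2}\|U\|_{L^4}^2$, one gets $\frac{\dif}{\dif t}\,\mE\|U_t\|^2\le -c\big(\mE\|U_t\|^2\big)^2+C$, whence the initial-condition-free bound $\mE\|U_t\|^2\le C(t^{-\gamma}+1)$ of Lemma \ref{4-1} (proved in the paper via \cite[Lemma 7.3]{MW17}). This ``coming down from infinity'' property is what produces the uniform irreducibility estimate (\ref{u-1}), $\inf_{\overline U_0\in H}P_T(\overline U_0,B_{\delta/2})>0$, which in turn makes (\ref{r-2}), and hence the $d$-contraction (\ref{e-2}), hold uniformly over all of $H$, giving a $U_0$-independent $C$; note also that the paper's gradient estimate in Proposition \ref{p-28} carries a constant independent of $U_0$, whereas yours is stated with $C(U_0)$. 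A minor additional slip: your parenthetical claim that $\cJ_{0,t}$ decays exponentially is false in general --- on the modes $k=\pm 1$ the terms $-A\cJ_{0,t}\xi+\cJ_{0,t}\xi$ cancel and the damping $-3U_t^2$ can be arbitrarily weak --- only the pathwise bound $\|\cJ_{s,t}\xi\|\le\|\xi\|$ of Lemma \ref{p-29} holds, and fortunately that is all your argument actually uses.
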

We emphasis that  the constant   $C$ appeared   in (\ref{p-31}) is independent of  the initial value  $U_0.$
This is  one of the  challenges in our  paper.

Based on Theorem \ref{p-27}, the following result  holds.
\begin{corollary}
For any $n\geq 1,$  if  $ \cZ_0=\{ -(n+1),-n,n,n+1\}$, the  results of  Theorem  \ref{p-27} hold.
\end{corollary}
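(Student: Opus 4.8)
The plan is to verify that the specific choice $\cZ_0 = \{-(n+1), -n, n, n+1\}$ satisfies all three parts of Hypothesis \ref{pp-2}, after which Theorem \ref{p-27} applies verbatim. Conditions (i) and (iii) are immediate: the set is symmetric under $k \mapsto -k$ (it pairs $n$ with $-n$ and $n+1$ with $-(n+1)$), and it has exactly four elements, which are distinct and nonzero whenever $n \geq 1$, so $|\cZ_0| = 4 < \infty$ and $\cZ_0 \subseteq \mZ_*$. The only substantive point is condition (ii), that the recursively generated sets exhaust $\mZ_*$; to avoid a notational clash with the fixed parameter $n$, I write the recursion level as $\cZ_j$, so $\cZ_j = \{k + \ell + m : k \in \cZ_{j-1},\, \ell, m \in \cZ_0\}$.

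The key observation is that the set of admissible increments $D := \{\ell + m : \ell, m \in \cZ_0\}$ contains $-1$, $0$, and $+1$: indeed $1 = (n+1) + (-n)$, $-1 = (-(n+1)) + n$, and $0 = n + (-n)$. Since $0 \in D$, one immediately obtains the nesting $\cZ_{j-1} \subseteq \cZ_j$ for every $j \geq 1$, so $\bigcup_j \cZ_j$ is an increasing union and it suffices to show that every nonzero integer lies in some $\cZ_j$. Since $\pm 1 \in D$, whenever $p \in \cZ_{j-1}$ we also have $p-1,\, p,\, p+1 \in \cZ_j$; that is, membership propagates by unit steps.

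From here a straightforward induction finishes the argument. Starting from $n \in \cZ_0$ and marching upward by $+1$ one reaches every integer $p \geq n$ (with $p \in \cZ_{p-n}$), while marching downward by $-1$ one reaches $n-1, n-2, \dots, 1$ before ever arriving at $0$, so each positive integer $1 \le p < n$ lies in $\cZ_{n-p}$. Symmetrically, starting from $-n \in \cZ_0$ and marching down (resp. up) produces every integer $p \le -n$ (resp. every $-n < p \le -1$). Hence every element of $\mZ_* = \mZ \setminus \{0\}$ is reached, which is exactly condition (ii) (the index $0$ corresponds to no mode of the phase space $H$ of mean-zero functions and is therefore discarded). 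I expect the only real content to be this marching argument; the obstacle here is conceptual rather than technical, namely recognizing that the two adjacent frequencies $n$ and $n+1$, together with their reflections, already generate the unit increments $\pm 1$ that connect all Fourier modes — precisely the minimal structure that Hypothesis \ref{pp-2} requires.
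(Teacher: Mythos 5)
Your proposal is correct and follows exactly the route the paper intends: the corollary is stated as an immediate consequence of Theorem \ref{p-27}, with the verification of Hypothesis \ref{pp-2} left to the reader, and your check of (i), (iii), and the generation condition (ii) via the increments $0, \pm 1 \in \{\ell + m : \ell, m \in \cZ_0\}$ supplies precisely that omitted verification. Your parenthetical handling of the mode $0$ (reading (ii) as the inclusion $\mZ_* \subseteq \bigcup_j \cZ_j$, since $e_0$ is not a mode of the mean-zero space $H$) is also the right reading of the hypothesis.
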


\subsection{The organization of this paper}
This article is organized as follows:
Section 2 is devoted to establishing some moment estimates.
In Section 3,  we  present the  proof of  spectral properties  for the Malliavin matrix $\cM_{0,t}$ of $U_t$  in  Theorem \ref{p-9}   and   demonstrate   a
gradient estimate of   $P_t$  in Proposition \ref{p-28}.
Finally, we give a proof of Theorem  \ref{p-27} in Section 4.

\section{Some moment estimates}

In this section, we  establish  some  moment estimates which are useful in this paper. When $T>0$ is a constant,  we always denote by  $C_T$  a constant depending on $T$ and it may changes from line to line.

We say that $U_t=U(t,U_0)$ is a solution to (\ref{p-1}) if it is $\cF_t$-adapted,
\begin{eqnarray}\label{pp-16}
  U\in C([0,\infty),H)\cap L_{loc}^2([0,\infty),V)\quad a.s.,
\end{eqnarray}
and $U$ satisfies (\ref{p-1}) in the mild sense, that is
\begin{eqnarray*}
  U_t=e^{-At}U_0-\int_0^te^{-A(t-s)}N(U_s)\dif s+\int_0^t e^{-A(t-s)}G\dif W_s.
\end{eqnarray*}

The following proposition summarizes the basic well-posedness, regularity, and smoothness of equation (\ref{p-1}).
\begin{proposition}
\label{9-1}
Given any $U_0\in H$, there exists a unique solution $U:[0,\infty)\times \Omega\rightarrow H$ of (\ref{p-1}) which is an $\cF_t$-adapted process on $H$ satisfying (\ref{pp-16}).

For any $t\geq 0$ and any realization of the noise $W(,\omega),$ the map $U_0\mapsto U(t,U_0)$ is Fr\'echet differential on $H$. For every fixed $U_0\in H$ and $t\geq 0,$
$W\mapsto U(t,W)$ is Frechet differential from $C((0,t), \mR^{|\cZ_0|})$ to $H$. Moreover, $U$ is spatially smooth for all positive time, that is, for any $t_0>0$ and any $s>0,$
\begin{eqnarray*}
  U\in C([t_0,\infty),H^s)\quad a.s..
\end{eqnarray*}
\end{proposition}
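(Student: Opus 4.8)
The plan is to remove the noise by the standard Ornstein--Uhlenbeck reduction and then treat the resulting equation pathwise. Let me write $Z_t=\int_0^t e^{-A(t-s)}G\,\dif W_s$ for the stochastic convolution, which solves $\dif Z=-AZ\,\dif t+G\,\dif W$ with $Z_0=0$. Because the forcing acts only on the finitely many modes $\{e_k:k\in\cZ_0\}$ and $A$ is diagonal in this basis, one has $Z_t=\sum_{k\in\cZ_0}\beta_k\big(\int_0^t e^{-\gamma_k(t-s)}\,\dif W_k(s)\big)e_k$, so $Z_t$ stays in the finite-dimensional space $\mathrm{span}\{e_k:k\in\cZ_0\}$; hence $Z\in C([0,\infty),H^s)$ for every $s>0$ almost surely, with moments of all orders. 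Setting $V:=U-Z$, the unknown $V$ then satisfies the deterministic (pathwise) equation
\begin{equation*}
\partial_t V=\Delta V+(V+Z)-(V+Z)^3,\qquad V|_{t=0}=U_0,
\end{equation*}
with the smooth random coefficient $Z$.

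Next I would establish the a priori estimate yielding \eqref{pp-16}. Testing the $V$--equation against $V$ and writing $U=V+Z$ gives $\frac12\frac{\dif}{\dif t}\|V\|^2+\|\n V\|^2=\langle U,V\rangle-\langle U^3,V\rangle$. The crucial term is the cubic one: since $\langle U^3,U\rangle=\|U\|_{L^4}^4$ and $|\langle U^3,Z\rangle|\le \e\|U\|_{L^4}^4+C_\e\|Z\|_{L^4}^4$, the cubic contributes a coercive $-(1-\e)\|U\|_{L^4}^4$ plus terms absorbed by the smoothness of $Z$, while the linear contribution $\langle U,V\rangle$ is dominated by $\|\n V\|^2$ through the Poincar\'e inequality on $H$. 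Integrating in time produces the bound on $\sup_{[0,T]}\|V\|^2+\int_0^T\|\n V\|^2\,\dif s$, hence $U\in C([0,\infty),H)\cap L^2_{loc}([0,\infty),V)$. Existence is then obtained from a Galerkin scheme whose uniform bounds pass to the limit through Aubin--Lions compactness, and uniqueness follows by an energy estimate for the difference of two solutions, using the monotonicity $(a^3-b^3)(a-b)\ge0$ so that the cubic term has a favorable sign; adaptedness is inherited from the pathwise construction.

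For the two differentiability statements I would differentiate the variational formulation. The Jacobian $\cJ_{0,t}\xi$ solves the linearized equation $\partial_t(\cJ_{0,t}\xi)=\Delta(\cJ_{0,t}\xi)+(\cJ_{0,t}\xi)-3U^2(\cJ_{0,t}\xi)$ with $\cJ_{0,0}\xi=\xi$, while the derivative in a noise direction $\eta$ solves the same linear equation forced by $G\dot\eta$ with zero initial datum; for a merely continuous $\eta$ the forcing $\int_0^t e^{-A(t-s)}G\,\dif\eta_s$ is given meaning by integration by parts, using that $e^{-A(t-s)}G$ ranges in the finite-dimensional smooth space. Because $-3U^2\le0$, these linear equations are well posed and their solution operators are bounded; Fr\'echet differentiability then follows by showing the remainder $U(t,U_0+h)-U(t,U_0)-\cJ_{0,t}h$ is $o(\|h\|)$ via a Gr\"onwall argument, the coefficient $3U^2$ being controllable once $U$ is known to be spatially bounded for positive times.

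Finally, the spatial smoothing $U\in C([t_0,\infty),H^s)$ is a parabolic bootstrap. Since $Z$ is already smooth it suffices to regularize $V$; writing $V$ in mild form and using $\|e^{-At}\|_{H^{-\sigma}\to H^{\sigma'}}\lesssim t^{-(\sigma+\sigma')/2}$ together with the one-dimensional embedding $V=V^1\hookrightarrow L^\infty$ (so that $(V+Z)^3$ inherits the integrability of its factors), one upgrades the regularity of $V$ by a fixed amount at each step and iterates to reach every $H^s$, with continuity in time away from the origin. The principal technical effort is the a priori estimate of the second paragraph together with this bootstrap; the differentiability claims are then routine consequences of the linear theory, so I expect the handling of the cubic nonlinearity in the energy and smoothing estimates to be the main obstacle, albeit a benign one since its sign is dissipative.
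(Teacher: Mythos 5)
Your proposal is correct and is essentially the argument the paper relies on: the paper offers no written proof of this proposition, remarking only that for spatially smooth, additive noise the result is standard and follows the classical proof for the stochastic 2D Navier--Stokes equations (citing Kuksin--Shirikyan), and that classical route is precisely your Ornstein--Uhlenbeck reduction $U=V+Z$ with a finite-dimensional smooth convolution $Z$, pathwise energy estimates, Galerkin approximation with monotonicity-based uniqueness, linearization for the two differentiability claims, and a parabolic bootstrap for smoothing. The one sketch-level imprecision is the statement that $\langle U,V\rangle$ is ``dominated by $\|\nabla V\|^2$ through Poincar\'e'' --- taken literally this would consume the entire dissipation; the linear term is instead absorbed by the coercive quartic term $-(1-\varepsilon)\|U\|_{L^4}^4$ (plus constants depending on $Z$), which your own setup already provides.
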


Since we are considering the case of spatially smooth, additive noise, the proof of the well-posedness  of (\ref{p-1}) is standard and can be obtained following along the line of classical proof for the stochastic 2D Navier-Stokes equations (see e.g. \cite{KS2012}).

Let $U_t=U(t, U_0,W)$ be the solution of  (\ref{p-1}) with initial value $U_0$ and noise $W$. For any $\xi\in H$  and $s\geq 0,$ $\cJ_{s,t}\xi$ denotes the unique solution of
\begin{eqnarray}\label{p-37}
\left\{
\begin{split}
  & \partial_t  \cJ_{s,t}\xi+ A\cJ_{s,t}\xi -\cJ_{s,t}\xi+3U_t^2\cJ_{s,t}\xi=0,
  \\ & \cJ_{s,s}\xi=\xi.
  \end{split}
  \right.
\end{eqnarray}
The Malliavin derivative $\cD:L^2(\Omega;H)\rightarrow L^2(\Omega, L^2(0,T,\mR^{|\cZ_0|})\times H)$ satisfies that   for each $v\in L^2(0,T,\mR^{|\cZ_0|}) $
\begin{eqnarray*}
  \langle \cD U,v\rangle_{ L^2(0,T,\mR^{|\cZ_0|}) }=\lim_{\eps \rightarrow 0}\frac{1}{\eps}
  \Big(U(T,U_0,W+\eps \int_0^\cdot v\dif s)-U(T,U_0,W)\Big),
\end{eqnarray*}
 we may infer that for $v\in L^2(\Omega, L^2(0,T,\mR^{|\cZ_0|})) ,$
 \begin{eqnarray*}
  \langle \cD U,v\rangle_{ L^2(0,T;\mR^{|\cZ_0|}) }=\int_0^T\cJ_{s,T}G v(s)\dif s,
\end{eqnarray*}
and hence, by the Riesz representation theorem,
\begin{eqnarray*}
  \cD^j_sU_T=\cJ_{s,T}G\theta_j, \text{ for any } s\leq T,j=1,\cdots, |\cZ_0|.
\end{eqnarray*}
Here and below, we adopt the standard notation $\cD^j_sF :=(DF)^j(s)$, that is,
 $\cD^j_sF$ is the $j$-th component of $\cD F$ evaluated at time s.

We define the random operator $\cA_{s,t}:L^2(s,t,\mR^{|\cZ_0|})\rightarrow H$ by
\begin{eqnarray*}
\cA_{s,t}v:=\int_{s}^{t}\cJ_{r,t}Gv(r)\dif r.
\end{eqnarray*}
Notice that, for any $0\leq s<t$, the function $\varrho(t):=\cA_{s,t}v$ satisfies the following equation
\begin{eqnarray*}
  \left\{
\begin{split}
  & \partial_t  \varrho(t)  + A \varrho(t) -\varrho(t)+3U_t^2 \varrho(t)=Gv(t),
  \\ & \varrho(s)=0.
  \end{split}
  \right.
\end{eqnarray*}
For any $s<t$, let $\cA_{s,t}^*:H\rightarrow L^2(s,t,\mR^{|\cZ_0|})$  be the adjoint of $\cA_{s,t}$, then
\begin{eqnarray*}
(\cA_{s,t}^*\xi)(r)=G^{*}K_{r,t}\xi,  \text{ for any }\xi\in H, r\in [s,t]
\end{eqnarray*}
where $G^*:H\rightarrow \mR^{|\cZ_0|}$ is the adjoint of $G$, and for $s<t,\cK_{s,t}\xi=\cJ_{s,t}^*\xi$ is the solution of the following ``backward"  system
\begin{eqnarray}\label{p-3}
 \partial_s \varrho^* =A\varrho^*+(\nabla N(U_s))^{*}\varrho^*=-(\nabla F(U_s))^*\varrho^*,\quad \varrho^*(t)=\xi.
\end{eqnarray}

We then define the Malliavin matrix
\begin{eqnarray}
\label{10-1}
  \cM_{s,t}:=\cA_{s,t}\cA_{s,t}^*:H\rightarrow H.
\end{eqnarray}
Observe that $\rho_t:=\cJ_{0,t}\xi-\cA_{0,t}v$ satisfies
\begin{eqnarray}
\label{a-1}
\left\{
\begin{split}
  & \partial_t  \rho_t+ A \rho_t-\rho_t+ 3U_t^2 \rho_t=-Gv(t),
  \\ & \rho(0)=\xi.
  \end{split}
  \right.
\end{eqnarray}

For any $t\geq s\geq 0$ let $\cJ_{s,t}^{(2)}:H\rightarrow \cL(H,\cL(H))$
be the second derivative of $U$ with respect to an initial value  $U_0$.
In this paper,  $\cL(X)=\cL(X,X)$ and  $\cL(X,Y)$  is the space of linear operators from   $X$ to $Y.$
 Observe
that for fixed $U_0\in H$ and any $\xi,\xi'\in H$ the function $\varrho_t:=\cJ_{s,t}^{(2)}(\xi,\xi')$ is the solution of
\begin{eqnarray*}
  \partial_t \varrho_t +A \varrho_t  -\varrho_t+3U_t^2\varrho_t+6U_t\cJ_{s,t}\xi \cJ_{s,t}\xi'=0,~\quad \varrho(s)=0.
\end{eqnarray*}

For any $\alpha \in (0,1]$ and function $g:[T/2,T]\rightarrow \mR$, ~$\|g\|_{C^\alpha[T/2,T]}$ is defined by
\begin{eqnarray*}
\|g\|_{C^\alpha[T/2,T]}:=\sup_{\mbox{\tiny$\begin{array}{c}
t_1\neq t_2\\
t_1,t_2\in [T/2,T]
\end{array}$}}
\frac{|g(t_1)-g(t_2)|}{|t_1-t_2|^{\alpha}}.
\end{eqnarray*}
For any $\alpha \in (0,1]$ and function $f:[T/2,T]\rightarrow H$,  we  define the semi-norms
\begin{eqnarray*}
  \|f\|_{C^\alpha([T/2,T],H)}:=
  \sup_{\mbox{\tiny$\begin{array}{c}
t_1\neq t_2\\
t_1,t_2\in [T/2,T]
\end{array}$}}
\frac{\|f(t_1)-f(t_2)\|}{|t_1-t_2|^{\alpha}}.
\end{eqnarray*}

\begin{lemma}
\label{4-1}
  For any $m>0,T>0,$ there exists a positive constant $\gamma=\gamma_{m,T} $ such that
  \begin{eqnarray}
  \label{3-1}
    \mE \| U_t^{m}\|^2\leq C_{T,m}(t^{-\gamma}+1), \quad \forall t\in (0,T].
  \end{eqnarray}
  and
     \begin{eqnarray}
     \label{3-2}
    \mE \| U_t^{m}\|^2 \leq C_{T,m}( \|U_0^{m}\|^2+1), \quad \forall t\in (0,T],
  \end{eqnarray}
  where $C_{T,m}$ is a constant depending on $T$ and $m$.
\end{lemma}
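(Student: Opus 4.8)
The plan is to exploit the strong dissipativity of the cubic nonlinearity $-U^3$ through an It\^o estimate on the $L^{2m}$-norm, thereby reducing the whole statement to a scalar differential inequality of Riccati type. Since
$$
\|U_t^m\|^2=\int_{\mT}|U_t(z)|^{2m}\,\dif z=\|U_t\|_{L^{2m}(\mT)}^{2m},
$$
and on the finite-measure torus lower moments are dominated by higher ones, I would first reduce to the case where $p:=2m$ is an even integer: indeed $\|U_t\|_{L^{2m'}}\leq C\|U_t\|_{L^{2m}}$ for $m'\leq m$, so controlling $\mE\|U_t\|_{L^{2M}}^{2M}$ for a large even integer $2M$ yields the bound for every real $m\le M$ by H\"older (in space) and Jensen (in probability).

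First I would apply It\^o's formula to the functional $U\mapsto\int_{\mT}U^p\,\dif z$, justified by a Galerkin approximation of (\ref{p-1}) and a passage to the limit using the regularity (\ref{pp-16}). The drift produces three terms: integration by parts turns the Laplacian into $-p(p-1)\int_{\mT}U^{p-2}|\n U|^2\,\dif z\le 0$; the linear term gives $+p\int_{\mT}U^p\,\dif z$; and the cubic term gives the crucial dissipation $-p\int_{\mT}U^{p+2}\,\dif z$. The It\^o correction from the additive noise equals $\tfrac{p(p-1)}{2}\int_{\mT}U^{p-2}\big(\sum_{k\in\cZ_0}\beta_k^2e_k^2\big)\,\dif z$, which is bounded by $C\int_{\mT}U^{p-2}\,\dif z$ because $\cZ_0$ is finite by Hypothesis \ref{pp-2}(iii) and the $e_k$ are bounded. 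Taking expectations (the stochastic integral has zero mean) and using Young's inequality to absorb $+p\int U^p$ and $C\int U^{p-2}$ into the dissipation, I obtain
\begin{eqnarray*}
\frac{\dif}{\dif t}\,\mE\int_{\mT}U_t^p\,\dif z &\leq& -c\,\mE\int_{\mT}U_t^{p+2}\,\dif z+C.
\end{eqnarray*}
Setting $y(t):=\mE\int_{\mT}U_t^p\,\dif z$ and using Jensen on the normalized torus together with convexity of $x\mapsto x^{(p+2)/p}$, one has $\mE\int_{\mT}U_t^{p+2}\,\dif z\geq C\,y(t)^{1+2/p}$, so that $y$ satisfies the Riccati-type inequality $y'(t)\leq -c\,y(t)^{1+2/p}+C$.

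The two conclusions now follow from comparison with this scalar ODE. For (\ref{3-2}) the right-hand side is negative whenever $y$ is large, so $y(t)$ never exceeds the maximum of $y(0)$ and the equilibrium value $(C/c)^{p/(p+2)}$, which gives $y(t)\le C(y(0)+1)$ and hence (\ref{3-2}). For the smoothing estimate (\ref{3-1}) I would drop the constant and compare with $y'=-c\,y^{1+2/p}$, whose solutions satisfy $y(t)\le(c\tfrac{2}{p}\,t)^{-p/2}$ \emph{independently of} $y(0)$; combined with the equilibrium bound this yields $y(t)\le C_{T,m}(t^{-\gamma}+1)$ with $\gamma=p/2=m$.

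The main obstacle will be making the singular estimate (\ref{3-1}) rigorous for an arbitrary $U_0\in H$, where $y(0)$ may be infinite. The clean route is to run the whole argument first for smooth (or bounded) initial data, for which $y(0)<\infty$ and the It\^o formula is unambiguous, then to note that the resulting bound $(c\tfrac{2}{p}t)^{-p/2}$ is completely uniform in $y(0)$, and finally to pass to general $U_0$ by approximation using the continuity of the solution map. A secondary technical point is the justification of both the It\^o formula and the integration by parts for the Laplacian when $U$ is only known to lie in $C([0,\infty),H)\cap L^2_{loc}([0,\infty),V)$; this I would handle via the Galerkin truncation above and the spatial smoothing asserted in Proposition \ref{9-1}.
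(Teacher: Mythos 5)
Your core mechanism coincides with the paper's: It\^o's formula applied to the $L^{2m}$-norm, with the cubic nonlinearity supplying the dissipation $-2m\,\mE\|U_t^{m+1}\|^2$, Young's inequality absorbing the lower-order terms, the nesting of Lebesgue norms on the torus plus Jensen in probability upgrading this to a superlinear self-improving inequality, and a ``coming down from infinity'' comparison giving the singular bound with the same exponent $\gamma=m$. The only real difference is the final step: the paper stays in integral form, deriving
\begin{eqnarray*}
\int_s^t \big(\mE\|U_r^m\|^2+1\big)^{\lambda}\,\dif r\leq C_{m,T}\big(\mE\|U_s^m\|^2+1\big),\qquad \lambda=\tfrac{m+1}{m},
\end{eqnarray*}
and invokes \cite[Lemma 7.3]{MW17} to produce a partition $0=t_0<t_1<\cdots<t_N=T$ along which the decay holds, whereas you differentiate and run a direct Riccati comparison. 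Your variant is more self-contained, but it requires $t\mapsto \mE\int_{\mT}U_t^p\,\dif z$ to be locally absolutely continuous, which is exactly the issue the integral formulation is designed to dodge; your Galerkin/approximation scheme is intended to supply this, and that is a legitimate (if more laborious) route.

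The genuine gap is that the even-integer reduction proves (\ref{3-1}) but not (\ref{3-2}). Controlling a large even moment $2M\geq 2m$ and interpolating gives
\begin{eqnarray*}
\mE\|U_t^m\|^2\leq C\big(\mE\|U_t^M\|^2\big)^{m/M}\leq C\big(\|U_0^M\|^2+1\big)^{m/M},
\end{eqnarray*}
a bound in terms of $\|U_0\|_{L^{2M}}$; but higher moments of the \emph{initial datum} are not dominated by lower ones, so the right-hand side is not controlled by $\|U_0^m\|^2$, and it is even infinite when $U_0\in L^{2m}\setminus L^{2M}$. The domination of lower moments by higher ones is harmless at time $t$ for (\ref{3-1}), whose right-hand side does not involve $U_0$, but it goes the wrong way on the initial condition, which is precisely what (\ref{3-2}) is about. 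The fix is cheap and is what the paper implicitly does: run your It\^o computation directly on $\int_{\mT}|U_t|^{2m}\,\dif z$ for the given real $m\geq 1$ (the map $x\mapsto|x|^{2m}$ is $C^2$ and every sign survives, e.g. $|U|^{2m-2}U\cdot U^3=|U|^{2m+2}$); then (\ref{3-2}) follows at once from $y'(t)\leq -c\,y(t)^{1+2/p}+C\leq C$ integrated from $0$ to $t$, with no reduction needed.
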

\begin{proof}
Applying $It\^{o}$ formula to  $f(t)=\langle U_t,U_t^{2m-1}\rangle$, it gives
\begin{eqnarray*}
  \dif \|U_t^m\|^2&\leq&  \langle \dif U_t,U_t^{2m-1}\rangle+\langle  U_t,(2m-1) U_t^{2m-2}\dif U_t \rangle+C_m \|U_t^{m-1}\|^2\dif t
  \\ &\leq &  \langle \dif U_t,2m U_t^{2m-1}\rangle+C_m (1+\|U_t^{m}\|^2)\dif t
    \\ &=&  \langle (\Delta U_t+U_t-U_t^3)\dif t ,2m U_t^{2m-1}\rangle+C_m (1+ \|U_t^{m}\|^2)\dif t+\dif  M_t
    \\ &=& -(2m)(2m-1)\|\partial_zU \cdot U_t^{m-1}\|^2+2m \|U_t^m\|^2-2m \|U_t^{m+1}\|^2+C_m(1+ \|U_t^{m}\|^2)\dif t+\dif M_t,
\end{eqnarray*}
where $M_t$ is a martingale, $C_m$ is a constant depending on $m$ and $\beta_k$.
For any $s\leq t\leq T,$
by Young's inequality $\|U_t^m\|^2\leq C_{\eps,m} +\eps \|U_{t}^{m+1}\|^2, \forall \eps>0$, one arrives at
\begin{eqnarray}
\label{1-2}
  \mE \|U_t^m\|^2+m \mE \int_s^t\|U_r^{m+1}\|^2\dif r \leq C_{m,T}(1+\mE \|U_s^m\|^2).
\end{eqnarray}
Note that  $$(|a|+|b|)^{p}\leq 2^{p-1}(|a|^p+|b|^p), \ \forall p>1,$$
 we deduce that
\begin{eqnarray}
\label{1-1}
\int_s^t \big(\mE \| U_{r}^m\|^2+1 \big)^\lambda \dif r \leq C_{m,T} \big(\mE \| U_{s}^m\|^2+1 \big),
\end{eqnarray}
where $\lambda=\lambda(m)=\frac{m+1}{m}>1.$
By  \cite[Lemma 7.3]{MW17}, there exist an integer $N\geq 1$
and a sequence $0=t_0<t_1<t_2<\cdots<t_N=T$ such that for every $k\in\{0,1,\cdots,N-1\} $
\begin{eqnarray*}
\mE  \| U_{t_k}^m\|^2  \leq C_{T,m}\big(t_{k+1}^{-\frac{1}{\lambda-1}}+1).
\end{eqnarray*}
For any $t\in [t_k,t_{k+1}),$ by  (\ref{1-2}), we obtain
\begin{eqnarray*}
  \mE  \| U_{t}^m \|^{2} \leq C_{T,m}\big(\mE \| U_{t_k}^m\|^{2}+1 \big) \leq C_{T,m} \big(t_{k+1}^{-\frac{1}{\lambda-1}}+1) \leq C_{T,m}\big(t^{-\frac{1}{\lambda-1}}+1)
\end{eqnarray*}
which implies  (\ref{3-1}).

Let $s=0$ in (\ref{1-2}),  we obtain the desired result (\ref{3-2}).
\end{proof}

Define
$
  \cE(t)=\|U_t\|^2+ \int_0^t \|U_s\|_1^2\dif s
$
and
\begin{eqnarray}
\label{s-1}
 \mathfrak{B}_n=\sum_{k\in\cZ_0}\gamma_k^n \beta_k^2
 \end{eqnarray}
for $n\in \mN\cup\{0\},$ where  $\gamma_k$ is defined by (\ref{zhang-1}) and $\beta_k$ is in (\ref{equu-1}).
\begin{lemma}
\label{2-1}
  For any $m\geq 0,T\geq 0$, there exist some $C=C_{T,m}$ such that
  \begin{eqnarray*}
  \mE \sup_{t\in [0,T]}\cE (t)^m \leq C ( \|U_0\|^{2m}+1).
  \end{eqnarray*}
\end{lemma}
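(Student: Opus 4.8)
The plan is to run the standard $L^2$ energy balance for \eqref{p-1}, exploiting the favourable sign of the cubic nonlinearity to close the estimate with no exponential-in-$T$ growth, and then to upgrade to the $m$-th moment of the supremum via the Burkholder--Davis--Gundy inequality together with a Young-type absorption. First I would apply It\^o's formula to $\|U_t\|^2$. Writing $F(U)=\Delta U+U-U^3$ and using $\langle U,\Delta U\rangle=-\|U\|_1^2$, $\langle U,U^3\rangle=\|U\|_{L^4}^4$, together with $\mathrm{Tr}(GG^*)=\mathfrak{B}_0$, this produces the identity
$$\|U_t\|^2+2\int_0^t\|U_s\|_1^2\,ds=\|U_0\|^2+\int_0^t\big(2\|U_s\|^2-2\|U_s\|_{L^4}^4+\mathfrak{B}_0\big)\,ds+2M_t,$$
where $M_t:=\int_0^t\langle U_s,G\,dW_s\rangle$ is a continuous local martingale.

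The key algebraic step is that the dissipative cubic term dominates the linear one. By the Cauchy--Schwarz inequality on the torus, $\|U\|^2\le (2\pi)^{1/2}\|U\|_{L^4}^2$, and then Young's inequality gives $2\|U\|^2-2\|U\|_{L^4}^4\le C_0$ for a constant $C_0$ depending only on $|\mT|=2\pi$ and $\mathfrak{B}_0$. Since $\int_0^t\|U_s\|_1^2\,ds\le 2\int_0^t\|U_s\|_1^2\,ds$, this yields the pathwise bound $\cE(t)\le\|U_0\|^2+C_0t+2M_t$ for all $t\in[0,T]$.

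Raising to the power $m\ge 1$ (the cases $0<m<1$ and $m=0$ are handled identically, BDG and $(a+b)^m\le C_m(a^m+b^m)$ being valid for every $m>0$), taking the supremum over $[0,T]$ and then expectations, it remains only to control $\mE\sup_{[0,T]}|M_t|^m$. Burkholder--Davis--Gundy and the bound $\langle M\rangle_T\le\|G\|^2\int_0^T\|U_s\|^2\,ds\le \|G\|^2 T\sup_{[0,T]}\cE(t)$ give
$$\mE\sup_{[0,T]}|M_t|^m\le C_m(\|G\|^2T)^{m/2}\,\mE\Big(\sup_{[0,T]}\cE(t)\Big)^{m/2}\le C_m(\|G\|^2T)^{m/2}\Big(\mE\sup_{[0,T]}\cE(t)^m\Big)^{1/2},$$
the last step by Cauchy--Schwarz. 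Setting $\Phi:=\mE\sup_{[0,T]}\cE(t)^m$, one arrives at an inequality of the form $\Phi\le C_{T,m}(\|U_0\|^{2m}+1)+C_{T,m}'\,\Phi^{1/2}$, and a further application of Young's inequality absorbs the $\Phi^{1/2}$ term into the left-hand side, delivering the claimed bound.

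The main obstacle is that this absorption is only legitimate once $\Phi<\infty$ is known \emph{a priori}. I would handle this by localization: introduce the stopping times $\tau_R:=\inf\{t\ge 0:\cE(t)\ge R\}$, carry out the whole argument with $t\wedge\tau_R$ in place of $t$ (so that the stopped supremum is bounded by $R$ and the stopped martingale obeys BDG), obtain the bound $C_{T,m}(\|U_0\|^{2m}+1)$ uniformly in $R$, and finally let $R\to\infty$, using the a.s. continuity of $t\mapsto\cE(t)$ from Proposition \ref{9-1} (so that $\tau_R\to\infty$) together with the monotone convergence theorem. The only other point requiring care is the sign bookkeeping in the energy balance: it is precisely the cubic dissipation beating the linear instability $+U$ that keeps $C_0$, and hence the final constant, free of any exponential factor in $T$.
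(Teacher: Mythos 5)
Your proof is correct, and its deterministic half coincides with the paper's: both start from the same It\^o identity for $\|U_t\|^2$ and both exploit the fact that the cubic dissipation $-\|U\|_{L^4}^4$ dominates the linear term $+\|U\|^2$ on the torus (Cauchy--Schwarz plus Young), so that the drift contributes only $Ct$. Where you genuinely diverge is in the treatment of the martingale term. You bound $\mE\sup_{[0,T]}|M_t|^m$ by Burkholder--Davis--Gundy, estimate $\langle M\rangle_T\leq \|G\|^2 T\sup\cE$, and close via Cauchy--Schwarz, Young absorption of $\Phi^{1/2}$ into $\Phi$, and a stopping-time localization $\tau_R$ to justify the a priori finiteness needed for that absorption. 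The paper instead never estimates $\sup_t|M_t|$ at all: it keeps the compensator, writes $\cE(t)-(\mathfrak{B}_0+C_\gamma)t\leq \|U_0\|^2+M_t-\frac{\gamma}{2}\langle M\rangle_t$, and applies the exponential supermartingale inequality to $\exp\{\gamma M_t-\frac{\gamma^2}{2}\langle M\rangle_t\}$, obtaining the Gaussian-type tail bound $\mP\big(\sup_{t\leq T}(\cE(t)-(\mathfrak{B}_0+C_\gamma)t)\geq \rho+\|U_0\|^2\big)\leq e^{-\gamma\rho}$ and then integrating the tail to recover the $m$-th moment. The trade-off: the paper's route needs no localization (the exponential supermartingale inequality holds for any continuous local martingale, with no integrability hypotheses), treats all $m$ simultaneously with one tail estimate, and actually yields exponential moments of $\sup_t(\cE(t)-Ct)$, which is the stronger information of the kind the paper relies on later (e.g.\ in part (b) of the proof of Theorem \ref{p-27}); your route uses only textbook tools (BDG plus absorption) and is perfectly adequate for the stated polynomial bound, at the cost of the extra bookkeeping with $\tau_R$ and of constants that degrade with $m$ through the BDG constant.
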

\begin{proof}
Let us set
$$
M_t=2\sum_{k\in\cZ_0} \beta_k\int_0^t \langle U_r,  e_k\rangle \dif W_k(r)
$$
By $It\^{o}$ formula, we have
\begin{eqnarray}
 \nonumber \|U_t\|^2&=& \|U_0\|^2 +\int_0^t 2\langle U_r, \dif U_r\rangle+\mathfrak{B}_0t
  \\ \nonumber &=& \|U_0\|^2+\int_0^t 2\langle U_r, -AU_r+U_r-U_r^3 \rangle\dif r + \mathfrak{B}_0t+M_t
    \\ \label{1-6} &=& \|U_0\|^2+\int_0^t \big(-2 \|U_r\|_1^2+2\|U_r\|^2-\|U_r\|_{L^4}^4 \big)\dif r + \mathfrak{B}_0t+M_t.
\end{eqnarray}
Note that the quadratic variation of $M_t$ is equal to
\begin{eqnarray*}
  \langle M\rangle_t=4\sum_{k\in \cZ_0} \beta_k^2\int_0^t \langle U_s,e_k\rangle^2\dif s
  \leq \gamma \int_0^t \| U_s\|^2 \dif s,
\end{eqnarray*}
where $\gamma=4\sum\limits_{k\in \cZ_0} \beta_k^2.$
We rewrite (\ref{1-6}) as follows
\begin{eqnarray*}
  \cE(t)- \mathfrak{B}_0t=\|U_0\|^2+M_t-\frac{1}{2}\gamma \langle M \rangle_t +K_t
\end{eqnarray*}
where
\begin{eqnarray*}
  K_t=\int_0^t \big(-\|U_r\|_1^2+2\|U_r\|^2-\|U_r\|_{L^4}^4 \big)\dif r+\frac{1}{2}\gamma \cdot \langle M \rangle_t\leq C_\gamma t.
\end{eqnarray*}
In the above, $C_\gamma$ is some constant depending on  $\gamma$ and in the last inequality,  we have used  $$\|U_r\|^2 \leq C_\eps +\eps  \|U_r\|_{L^4}^4, \quad \forall \eps>0.$$
Therefore,
\begin{eqnarray*}
 \cE(t)- (\mathfrak{B}_0+C_\gamma )t\leq  \|U_0\|^2+M_t-\frac{\gamma}{2}\langle M\rangle_t.
\end{eqnarray*}
By the supermartingale  inequality (cf. \cite[(7.57)]{KS2012}), we have
\begin{eqnarray*}
 &&  \mP\left(\sup_{t\in[0,T]}\big(\cE(t)- (\mathfrak{B}_0+C_\gamma t)\big)\geq \rho+\|U_0\|^2\right)
  \\ &&\leq \mP\left(\exp\left\{\gamma M_t-\frac{\gamma^2}{2}\langle M\rangle_t\right\}\geq e^{\gamma \rho}\right)
  \\&&\leq e^{-\gamma \rho}.
\end{eqnarray*}
Note that if $\xi$ and $\eta$ are non-negative random variables, then
\begin{eqnarray*}
  \mE \xi^m &\leq&    2^{m-1}\big(\mE (\xi-\eta)^m\mI_{\{\xi>\eta\}}+\mE \eta^m\big)
  \\ &=& 2^{m-1}\int_0^\infty \mP(\xi-\eta>\lambda^{1/m}) \dif \lambda+2^{m-1} \mE \eta^m.
\end{eqnarray*}
Apply this inequality  to $\xi=\sup_t\cE(t)$ and $\eta=\mathfrak{B}_0+C_\gamma t+\|U_0\|^2,$ we derive
\begin{eqnarray*}
  \mE \sup_{t\in[0,T]}\cE(t)^m\leq 2^{m-1}\int_0^\infty \exp{(-\gamma \lambda^{1/m})}\dif \lambda+2^{m-1}\mE (\mathfrak{B}_0+C_\gamma t+\|U_0\|^2)^m
\end{eqnarray*}
which yields the desired result.
\end{proof}

For any integer  $n\geq 0$, we set
\begin{eqnarray*}
  \cE(n,t)=t^n \|U_t\|_n+ \int_0^t s^n \|U_s\|_{n+1}^2 \dif s.
\end{eqnarray*}

\begin{lemma}
\label{1-8}
  For any $n, m\geq 0$, there exists  a constant $\kappa=\kappa_{n,m}$ such that
  \begin{eqnarray}
  \label{2-2}
  \mE \sup_{t\in [0,T]}\cE (n,t)^m \leq C_{n,m,T} ( \|U_0^\kappa\|^2+1).
  \end{eqnarray}
\end{lemma}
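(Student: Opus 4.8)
The plan is to induct on the order $n$, exploiting the parabolic smoothing encoded in the time weight $t^n$ together with the dissipation $-\|U\|_{n+1}^2$ to absorb the cubic nonlinearity. For $n=0$ one has $\cE(0,t)=\|U_t\|^2+\int_0^t\|U_s\|_1^2\dif s=\cE(t)$, so the base case is Lemma \ref{2-1} combined with the elementary inequality $\|U_0\|^{2m}\le C\|U_0^m\|^2$ (Jensen on $\mT$), which gives \eqref{2-2} with $\kappa_{0,m}=m$. Suppose now that \eqref{2-2} holds for all orders $\le n-1$ and all $m\ge0$.

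For the inductive step I would apply It\^o's formula to $t^n\|U_t\|_n^2=t^n\langle A^nU_t,U_t\rangle$. Using $\langle A^nU,\Delta U\rangle=-\|U\|_{n+1}^2$, $\langle A^nU,U\rangle=\|U\|_n^2$ and $\mathrm{tr}(A^nGG^*)=\mathfrak{B}_n$, this yields
\begin{eqnarray*}
\dif\big(t^n\|U_t\|_n^2\big)
&=&\Big(nt^{n-1}\|U_t\|_n^2-2t^n\|U_t\|_{n+1}^2+2t^n\|U_t\|_n^2
\\ && \quad -2t^n\langle A^nU_t,U_t^3\rangle+t^n\mathfrak{B}_n\Big)\dif t+\dif M_t^{(n)},
\end{eqnarray*}
where $M_t^{(n)}=2\sum_{k\in\cZ_0}\beta_k\gamma_k^n\int_0^ts^n\langle U_s,e_k\rangle\dif W_k(s)$ is a martingale whose quadratic variation is bounded by $C\int_0^ts^{2n}\|U_s\|^2\dif s$, since $|\cZ_0|<\infty$.

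The delicate term is the cubic one. Since $A=-\partial_z^2$ on $\mT$, integration by parts gives $\langle A^nU,U^3\rangle=\int_{\mT}\partial_z^nU\cdot\partial_z^n(U^3)\dif z$, and the Leibniz rule produces
\begin{eqnarray*}
\partial_z^n(U^3)=3U^2\,\partial_z^nU+\sum_{\substack{a+b+c=n\\ \max\{a,b,c\}\le n-1}}c_{a,b,c}\,\partial_z^aU\,\partial_z^bU\,\partial_z^cU .
\end{eqnarray*}
The top-order contribution $3\int_{\mT}U^2(\partial_z^nU)^2\dif z\ge0$ carries the favorable sign and may be dropped. After suitable further integration by parts each of the remaining cubic terms is a space integral of a product of four derivatives of $U$ of total order $2n$; by the one--dimensional Gagliardo--Nirenberg inequality and the embedding $H^1(\mT)\hookrightarrow L^\infty(\mT)$ every such term is dominated by $\eps\|U\|_{n+1}^2+C_\eps\,P\big(\|U\|_n,\dots,\|U\|_0\big)$, where $P$ is a polynomial; whenever a factor $\|U\|_n$ survives in $P$ it is split by interpolation $\|U\|_n^2\le\eps\|U\|_{n+1}^2+C_\eps\|U\|_{n-1}^2$, so that the top Sobolev norm enters only through the absorbable term.

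Choosing $\eps$ small, the dissipation $-2t^n\|U_t\|_{n+1}^2$ absorbs all the $\eps\|U\|_{n+1}^2$ contributions, leaving on the drift side only $nt^{n-1}\|U_t\|_n^2$ and $t^n$ times a polynomial in $\|U_t\|_k$, $k\le n-1$. Integrating in time, the term $\int_0^tns^{n-1}\|U_s\|_n^2\dif s$ is precisely (a multiple of) the integral part of $\cE(n-1,t)$, while each weighted integral $\int_0^ts^n(\text{lower-order polynomial})\dif s$ is, after H\"older in time, a product of quantities of the form $\sup_{s\le t}\cE(k,s)$ with $k\le n-1$. Hence, taking the supremum over $t\in[0,T]$, raising to the $m$-th power, and using the Burkholder--Davis--Gundy inequality for $M^{(n)}$ (again dominated by a lower-order energy) together with the supermartingale argument of Lemma \ref{2-1}, all terms on the right are controlled in $m$-th moment by the induction hypothesis and Lemma \ref{4-1}, at the expense of enlarging the exponent $\kappa$. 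This closes the induction. The main obstacle is exactly the cubic estimate: one must arrange the Leibniz and integration-by-parts bookkeeping so that the genuinely top-order part keeps its sign while every cross term is split, by interpolation in one space dimension, into an absorbable multiple of $\|U\|_{n+1}^2$ plus weighted lower-order energies $\cE(k,\cdot)$, $k\le n-1$, with the time weights $s^n$ matching exactly.
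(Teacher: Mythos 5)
Your proposal follows the same skeleton as the paper's proof: induction on $n$, It\^o's formula applied to $t^n\langle A^nU_t,U_t\rangle$, identification of $\int_0^t ns^{n-1}\|U_s\|_n^2\,\dif s$ with the integral part of $\cE(n-1,t)$, and the exponential supermartingale argument of Lemma \ref{2-1} to pass to moments. Where you genuinely diverge is the treatment of the cubic term. The paper never expands $\partial_z^n(U^3)$ by Leibniz for $n\geq 2$: it exploits the sign of the nonlinearity only at level $n=1$ (inequality \eqref{3-7}), and for general $n$ it invokes the crude Moser-type bound \eqref{3-6}, $|\langle A^nU_s,U_s^3\rangle|\leq C(1+\|U_s\|_1^2)\|U_s\|_n^2$, then factorizes $s^n(1+\|U_s\|_1^2)\|U_s\|_n^2\leq C(1+\cE(1,s))\,s^{n-1}\|U_s\|_n^2$, so the whole cubic contribution is dominated by $(1+\cE(1,t))$ times the integral part of $\cE(n-1,t)$; no absorption into the dissipation is needed, and moments of the product are handled by Cauchy--Schwarz plus the induction hypothesis at levels $1$ and $n-1$. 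Your route --- keep the signed top-order piece $3\int_{\mT}U^2(\partial_z^nU)^2\,\dif z$, interpolate the cross terms, and absorb $\eps\|U\|_{n+1}^2$ into the dissipation --- is also viable, but it is longer, and the paper's version shows that the heavy Leibniz bookkeeping is unnecessary here.

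One step of yours is glossed in a way that would fail if taken literally: the claim that each remaining term $\int_0^t s^n P\big(\|U_s\|_{n-1},\dots,\|U_s\|_0\big)\dif s$ becomes, after H\"older in time, a product of quantities $\sup_{s\leq t}\cE(k,s)$, $k\leq n-1$, with the time weights ``matching exactly.'' They do not match in general: a factor $\|U_s\|_{n-1}^4$ is controlled by $\cE(n-1,s)^2$ only at the price of $s^{-2(n-1)}$, and $s^{n}\cdot s^{-2(n-1)}=s^{2-n}$ is not even integrable near $0$ once $n\geq 3$. The repair stays inside your own framework: in one space dimension, Gagliardo--Nirenberg lets you interpolate every cross term all the way between $\|U\|_{n+1}$ and $\|U\|_{L^2}$, and since the total exponent landing on $\|U\|_{n+1}$ is strictly less than $2$, Young's inequality yields a bound $\eps\|U\|_{n+1}^2+C_\eps\|U\|^{N}$ involving no intermediate norms at all; the remainder $\int_0^t s^n C_\eps\|U_s\|^{N}\dif s$ is then controlled directly by Lemma \ref{2-1} together with Lemma \ref{4-1}. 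With that correction your induction closes, and in fact you then need the induction hypothesis only at level $n-1$ rather than at all lower levels.
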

\begin{proof}
The proof is based on the method of induction in $n$.
Let us set $f_n(t)=t^n\langle A^n U_t,U_t\rangle$.
By the  $It\^{o}$ formula in \cite[Theorem 7.7.5]{KS2012} and  following similar arguments in the proof of \cite[Proposition 2.4.12]{KS2012}, we have
\begin{eqnarray}\label{3-3}
  f_n(t)=\int_0^t ns^{n-1}\|U_s\|_n^2+2s^n\langle A^nU_s,-AU_s+U_s-U_s^3\rangle+\mathfrak{B}_ns^n\dif s+M_t,
\end{eqnarray}
where
\begin{eqnarray*}
  M_t=\sum_{k\in \cZ_0} 2 \beta_k \int_0^t \langle A^nU_s,e_k\rangle \dif W_k(s)
  =\sum_{k\in \cZ_0} 2 \beta_k \gamma_k^n \int_0^t \langle U_s,e_k\rangle \dif W_k(s)
\end{eqnarray*}
The quadratic variation of $M_t$ is equal to
\begin{eqnarray}
\label{11-1}
  \langle M\rangle_t=4\sum_{k\in \cZ_0} \beta_k^2\gamma_k^{2n} \int_0^t \langle U_s,e_k\rangle^2\dif s
  \leq \gamma  \int_0^t \| U_s\|^2 \dif s,
\end{eqnarray}
where $\gamma=4\sum_{k\in \cZ_0} \beta_k^2\gamma_k^{2n}.$

Obviously, we have the following  identities
\begin{eqnarray}
  \label{3-4}\langle A^n U_s,AU_s\rangle= \|U_s\|_{n+1}^2,\quad
  \langle A^n U_s,U_s\rangle= \|U_s\|_n^2.
\end{eqnarray}
Firstly, we consider the case $n=1.$
In view of
\begin{eqnarray}
  \label{3-7}
  \begin{split}
  &\langle A U_s,U_s-U_s^3\rangle=\int_{\mT} -\frac{\partial^2  U_s(z)}{\partial z^2}(U_s(z)-U_s(z)^3)\dif z
  \\ & =\int_{\mT} \frac{\partial  U_s(z)}{\partial z}(\frac{\partial  U_s(z)}{\partial z}-3U_s(z)^2\frac{\partial  U_s(z)}{\partial z})\dif z\leq  \|U_s\|_1^2
  \end{split}
\end{eqnarray}
and by (\ref{3-3})(\ref{3-4}), we obtain
\begin{eqnarray*}
  t\|U_t\|_1^2+\int_0^ts\|U_s\|_2^2\dif s\leq C_T \int_0^t\|U_s\|_1^2\dif s+Ct^2+M_t.
\end{eqnarray*}
By (\ref{11-1}), we rewrite the above equality  in the form
\begin{eqnarray*}
  \cE(1,t)
    & \leq &   C_T \int_0^t \|U_s\|_1^2 \dif s +\frac{\gamma}{2}\langle M\rangle_t+C_T  t
  +M_t-\frac{\gamma}{2}\langle M\rangle_t
  \\ &\leq &     C_T \int_0^t \|U_s\|_1^2 \dif s +\frac{\gamma^2}{2}  \int_0^t \| U_s\|^2 \dif s+C_T  t
  +M_t-\frac{\gamma}{2}\langle M\rangle_t
    \\ &\leq &     C_T\cE(t)+C_T  t
  +M_t-\frac{\gamma}{2}\langle M\rangle_t.
\end{eqnarray*}
Combining the above  inequality with Lemma \ref{2-1} and following a similar argument as in the proof of   Lemma \ref{2-1}, we finish the proof of the inequality
(\ref{2-2}) with $n=1.$

Now, assume that for $k\leq n-1$, the inequality (\ref{2-2})  holds.
By Sobolev embedding theorem, we have
\begin{eqnarray}\notag
 \langle A^n U_s,U_s^3\rangle&=& \sum_{|\alpha|=n}C_\alpha \langle D^\alpha U^3, D^\alpha U\rangle
 \\   \label{3-6}
   &\leq& C(1+\|U_s\|_\infty^2)\|U_s\|_n^2
 \leq C(1+\|U_s\|_1^2)\|U_s\|_n^2,
\end{eqnarray}
where  $\|U_s\|_\infty=\sup_{z\in \mT}|U_s(z)|$.

By utilizng (\ref{3-6}) and (\ref{11-1})(\ref{3-4}),
we rewrite (\ref{3-3}) in the form
\begin{eqnarray*}
  \cE(n,t)&=& \int_0^t\Big[ ns^{n-1}\|U_s\|_n^2-s^n\|U_s\|_{n+1}^2+ 2s^n\|U_s\|_n^2+2s^n\langle A^nU_s,-U_s^3\rangle+\mathfrak{B}_ns^n\Big] \dif s+M_t
  \\ &\leq & \int_0^tC_{T,n}(1+s\|U_s\|_1^2)s^{n-1}\|U_s\|_n^2\dif s +\frac{\mathfrak{B}_n t^{n+1}}{n+1}
  +M_t-\frac{\gamma}{2}\langle M\rangle_t+\frac{\gamma}{2}\langle M\rangle_t
    \\ &\leq & C_{T,n}(\cE(1,t)+1) \int_0^t s^{n-1}\|U_s\|_n^2\dif s +C_n  t^{n+1}
  +M_t-\frac{\gamma}{2}\langle M\rangle_t+\gamma \cdot T\cdot \cE(t).
\end{eqnarray*}
Therefore,
\begin{eqnarray*}
\cE(n,t)-C_{T,n}(1+\cE(1,t))\cE(n-1,t)-C_n  t^{n+1}-\gamma\cdot  T \cdot \cE(t)  \leq M_t-\frac{\gamma}{2}\langle M\rangle_t.
\end{eqnarray*}
By the supermartingale  inequality, we have
\begin{eqnarray*}
 &&  \mP\left(\sup_{t\in[0,T]} \big(\cE(n,t)-C_{T,n}\cE(1,t)\cE(n-1,t)-C_n  t^{n+1}-\gamma\cdot  T \cdot \cE(t)  \big)\geq \rho\right)
  \\&&\leq e^{-\gamma \rho}.
\end{eqnarray*}
Since the inequality (\ref{2-2}) holds for $k\leq n-1$, using   similar arguments as that in Lemma \ref{2-1},
the inequality (\ref{2-2}) holds for $k=n.$
\end{proof}

\begin{lemma}
\label{8-1}
  For any $n, m,T>0$ and $0< s\leq T,$ there exists a positive constant $\lambda=\lambda_{n, m,T} $ such that
  \begin{eqnarray*}
    \mE \sup_{t\in [s,T]}\| U_t\|_n^m\leq C_{n,m,T}(s^{-\lambda}+1).
  \end{eqnarray*}
\end{lemma}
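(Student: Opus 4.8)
The plan is to obtain the desired $U_0$-\emph{independent} bound by combining the smoothing estimate of Lemma \ref{1-8} with the $U_0$-uniform moment bound (\ref{3-1}) of Lemma \ref{4-1}, using the Markov property to restart the equation at the positive time $s/2$. Lemma \ref{1-8} cannot be applied directly at time $0$ for two reasons: its right-hand side $\|U_0^\kappa\|^2$ is the $L^{2\kappa}$-norm of the initial datum, which need not be finite for a generic $U_0\in H=L^2(\mT)$, and even when finite it depends on $U_0$, whereas we want a bound depending only on $s,n,m,T$. Both difficulties are dissolved by the instantaneous smoothing of the flow recorded in Proposition \ref{9-1}: at any positive time the solution lies in every $L^p$ space, and its moments are controlled uniformly in $U_0$ by (\ref{3-1}).

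Concretely, I would fix $s\in(0,T]$, set $s_0=s/2$, and let $\kappa=\kappa_{n,m/2}$ be the exponent supplied by Lemma \ref{1-8} for moment order $m/2$. By Proposition \ref{9-1}, $U_{s_0}\in L^{2\kappa}(\mT)$ almost surely. By the Markov property and time-homogeneity, the shifted process $\tilde U_r:=U_{s_0+r}$ solves the same equation on $[0,T-s_0]$ with the $\cF_{s_0}$-measurable initial datum $U_{s_0}$ and a Brownian motion independent of $\cF_{s_0}$. Applying Lemma \ref{1-8} conditionally on $\cF_{s_0}$ (with $m$ replaced by $m/2$) would give
\begin{eqnarray*}
\mE\Big[\sup_{r\in[0,T-s_0]}\tilde\cE(n,r)^{m/2}\,\Big|\,\cF_{s_0}\Big]\leq C_{n,m,T}\big(\|U_{s_0}^\kappa\|^2+1\big),
\end{eqnarray*}
where $\tilde\cE(n,r)=r^n\|\tilde U_r\|_n^2+\int_0^r u^n\|\tilde U_u\|_{n+1}^2\,\dif u$.

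Next I would transfer this to the target quantity. For $t\in[s,T]$ the local time $r=t-s_0$ satisfies $r\geq s_0=s/2$, so $r^n\geq(s/2)^n$ and, from the definition of $\tilde\cE$,
\begin{eqnarray*}
\|U_t\|_n^2=\|\tilde U_r\|_n^2\leq r^{-n}\tilde\cE(n,r)\leq (s/2)^{-n}\sup_{r\in[0,T-s_0]}\tilde\cE(n,r).
\end{eqnarray*}
Raising to the power $m/2$, taking the supremum over $t\in[s,T]$, and using the tower property together with the conditional bound above would yield
\begin{eqnarray*}
\mE\sup_{t\in[s,T]}\|U_t\|_n^m\leq (s/2)^{-nm/2}\,\mE\Big[\mE\big[\sup_{r}\tilde\cE(n,r)^{m/2}\,\big|\,\cF_{s_0}\big]\Big]\leq (s/2)^{-nm/2}C_{n,m,T}\big(\mE\|U_{s_0}^\kappa\|^2+1\big).
\end{eqnarray*}
Finally, since $s_0=s/2>0$, the $U_0$-independent estimate (\ref{3-1}) of Lemma \ref{4-1} applied with exponent $\kappa$ bounds $\mE\|U_{s_0}^\kappa\|^2\leq C_{T,\kappa}((s/2)^{-\gamma}+1)$ for a suitable $\gamma=\gamma_{\kappa,T}$; substituting and collecting the negative powers of $s$ produces a bound of the form $C_{n,m,T}(s^{-\lambda}+1)$ with, say, $\lambda=nm/2+\gamma$, which is the claim.

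The hard part will be the conceptual point of the first paragraph: a generic $H$-valued initial condition cannot be fed into Lemma \ref{1-8}, so the argument genuinely hinges on restarting at a positive time and invoking the $U_0$-uniform smoothing bound (\ref{3-1}). Some care is also needed to justify that the deterministic-initial-data estimate of Lemma \ref{1-8} passes to the conditional expectation given $\cF_{s_0}$ with the random initial value $U_{s_0}$; this is standard given the independence of the post-$s_0$ noise from $\cF_{s_0}$ and the measurability of the solution map in its initial datum, but it should be stated explicitly rather than taken for granted.
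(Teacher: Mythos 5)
Your proof is correct and follows essentially the same route as the paper's own (very compressed) argument, which likewise obtains the bound by combining Lemma \ref{1-8} with the $U_0$-uniform estimate (\ref{3-1}) of Lemma \ref{4-1} through a Markov restart at a positive time, yielding $\mE\sup_{t\in[s,T]}\|U_t\|_n^m\leq C s^{-nm}\,\mE(\|U_s^\kappa\|^2+1)\leq C s^{-nm}(s^{-\gamma}+1)$. If anything, your version is the more careful one: restarting at $s/2$ (rather than at $s$, as the paper's formula literally suggests) avoids the degeneration of the weight $r^n$ at local time $r=0$, and you rightly flag both the possible non-finiteness of $\|U_0^\kappa\|$ for generic $U_0\in H$ and the need to justify the conditional application of Lemma \ref{1-8} to the random initial datum $U_{s_0}$.
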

\begin{proof}
  By Lemma  \ref{1-8} and Lemma  \ref{4-1}, one sees that for some $\kappa,\gamma>0$
  \begin{eqnarray*}
   \mE \sup_{t\in [s,T]}\| U_t\|_n^m  \leq  C_{n,m,T}s^{-nm}\mE (\|U_s^\kappa\|^{2}+1)
  \leq  C_{n,m,T}s^{-nm}(s^{-\gamma}+1).
  \end{eqnarray*}
By setting   $\lambda=nm+\gamma$, we complete  the proof.
\end{proof}

\begin{lemma}\label{p-29}
For each  $\xi\in H$ and $0<s<t\leq T,$ we have the following pathwise estimates
\begin{eqnarray}
\label{p-38}
  && \|\cJ_{s,t}\xi\| \leq  \|\xi\|,
\quad
  \|\cK_{s,t}\xi\| \leq  \|\xi\|.
\end{eqnarray}
 Moreover, for each $\tau\leq T$ and $p\geq 1,$  there exists $C=C_{T,p}$ such that
\begin{eqnarray}
  \label{p-13} \mE \sup_{s<t \in [\tau,T]}\|\cJ_{s,t}^{(2)}(\xi,\xi')\|^p &\leq &  C\|\xi\|^p\|\xi'\|^p
\end{eqnarray}
\end{lemma}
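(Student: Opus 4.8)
The plan is to dispatch the two pathwise bounds in (\ref{p-38}) by elementary energy estimates resting on the Poincar\'e inequality for $H$, and then to obtain the second-order bound (\ref{p-13}) by writing $\cJ_{s,t}^{(2)}$ through a variation-of-constants formula that reduces everything to the contraction property (\ref{p-38}) together with the moment estimates of Section 2. First I would set $\phi_t=\cJ_{s,t}\xi$ and test (\ref{p-37}) against $\phi_t$. Using $\langle A\phi_t,\phi_t\rangle=\|\phi_t\|_1^2$ and $\langle 3U_t^2\phi_t,\phi_t\rangle=3\int_{\mT}U_t^2\phi_t^2\geq0$, this gives
$$\frac{1}{2}\frac{\dif}{\dif t}\|\phi_t\|^2=-\|\phi_t\|_1^2+\|\phi_t\|^2-3\int_{\mT}U_t^2\phi_t^2\leq \|\phi_t\|^2-\|\phi_t\|_1^2.$$
Since every nonzero Fourier mode has $\gamma_k=|k|^2\geq1$, the Poincar\'e inequality $\|\phi_t\|^2\leq\|\phi_t\|_1^2$ holds, so the right-hand side is nonpositive and $t\mapsto\|\phi_t\|$ is nonincreasing, yielding $\|\cJ_{s,t}\xi\|\leq\|\xi\|$. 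As this means $\|\cJ_{s,t}\|_{\cL(H)}\leq1$ and $\cK_{s,t}=\cJ_{s,t}^*$, the adjoint has the same operator norm, so $\|\cK_{s,t}\xi\|\leq\|\cJ_{s,t}\|_{\cL(H)}\|\xi\|\leq\|\xi\|$ (equivalently, the same computation applied to the backward system (\ref{p-3}) shows $s\mapsto\|\cK_{s,t}\xi\|^2$ is nondecreasing).

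Along the way I would record the integrated smoothing estimate for the Jacobian: integrating the energy identity above from $s$ to $t$ and discarding the nonpositive terms gives, pathwise and with $C_T$ independent of $U_0$,
$$\int_s^t\|\cJ_{s,r}\xi\|_1^2\,\dif r\leq \tfrac12\|\xi\|^2+\int_s^t\|\cJ_{s,r}\xi\|^2\,\dif r\leq C_T\|\xi\|^2,$$
where I again used $\|\cJ_{s,r}\xi\|\leq\|\xi\|$. This is the one piece of extra regularity on the Jacobian I will need.

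For (\ref{p-13}), write $\phi_t=\cJ_{s,t}\xi$, $\psi_t=\cJ_{s,t}\xi'$. Then $\varrho_t:=\cJ_{s,t}^{(2)}(\xi,\xi')$ solves the linearized equation with source $-6U_t\phi_t\psi_t$ and zero initial datum, so variation of constants (the homogeneous solution operator from time $r$ being $\cJ_{r,t}$) gives $\varrho_t=-6\int_s^t\cJ_{r,t}(U_r\phi_r\psi_r)\,\dif r$, whence by (\ref{p-38})
$$\|\varrho_t\|\leq 6\int_s^t\|U_r\phi_r\psi_r\|\,\dif r.$$
To estimate the integrand I distribute the spatial regularity: $\|U_r\phi_r\psi_r\|\leq\|U_r\|_{L^\infty}\|\psi_r\|_{L^\infty}\|\phi_r\|$, and by Agmon's inequality $\|\psi_r\|_{L^\infty}^2\leq C\|\psi_r\|\,\|\psi_r\|_1$ together with $\|\phi_r\|\leq\|\xi\|$, $\|\psi_r\|\leq\|\xi'\|$ this is bounded by $C\|\xi\|\,\|\xi'\|^{1/2}\|U_r\|_{L^\infty}\|\psi_r\|_1^{1/2}$. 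A single Cauchy--Schwarz in $r$, the integrated bound $\int_s^t\|\psi_r\|_1^2\,\dif r\leq C_T\|\xi'\|^2$ above, and the one-dimensional embedding $\|U_r\|_{L^\infty}\leq C\|U_r\|_1$ then give, uniformly for $\tau\leq s<t\leq T$,
$$\|\varrho_t\|\leq C_T\,\|\xi\|\,\|\xi'\|\Big((T-\tau)\sup_{r\in[\tau,T]}\|U_r\|_1^2\Big)^{1/2}.$$
Raising to the power $p$, taking the supremum and then expectation, and invoking Lemma \ref{8-1} to bound $\mE\sup_{r\in[\tau,T]}\|U_r\|_1^p$ by a finite constant independent of $U_0$ (the restriction $\tau>0$ being exactly what removes the $U_0$-dependence) yields (\ref{p-13}).

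The hard part is the source $U_t\phi_t\psi_t$: the contraction estimates only control $\phi,\psi$ in $L^2$, whereas the product must be measured in $L^2$, which genuinely demands control beyond $L^2$ on one factor. The resolution, which is the technical heart of the argument, is to trade the time-integrated $H^1$ smoothing of the Jacobian against the spatial smoothness of $U$ for positive times through Agmon's inequality, so that after one application of Cauchy--Schwarz the whole expression is dominated by the finite, $U_0$-independent moments supplied by Lemmas \ref{2-1} and \ref{8-1}.
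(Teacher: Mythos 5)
Your treatment of (\ref{p-38}) and of the integrated smoothing bound $\int_s^t\|\cJ_{s,r}\xi\|_1^2\,\dif r\leq C_T\|\xi\|^2$ coincides with the paper's proof (energy identity, Poincar\'e on mean-zero functions, duality for $\cK_{s,t}$). The gap is in your proof of (\ref{p-13}). The lemma asserts a constant $C=C_{T,p}$, uniform in $\tau$ and in $U_0$, and this uniformity is exactly what is consumed later: in Lemma \ref{a-2} the quantities $\cD_\tau^j\cJ_{s,t}\xi=\cJ_{\tau,t}^{(2)}(G\theta_j,\cJ_{s,\tau}\xi)$ must be bounded with $C=C_{T,p}$ for \emph{all} $0\leq \tau\leq t\leq T$, and Proposition \ref{p-28} needs these bounds free of $U_0$. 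Your argument estimates the Duhamel source by $\|U_r\|_\infty$ and then invokes Lemma \ref{8-1}, whose bound is $\mE\sup_{r\in[\tau,T]}\|U_r\|_1^p\leq C_{T,p}(\tau^{-\lambda}+1)$; so your final constant blows up as $\tau\downarrow 0$ and gives nothing at $\tau=0$. The alternative you allude to (trading the sup for $\int_0^T\|U_r\|_1^2\,\dif r$ via Lemma \ref{2-1}) removes the $\tau$-singularity but reintroduces dependence on $\|U_0\|$, which is equally disallowed. Either way, the statement as written (and as used) is not proved.

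The missing idea is that no regularity of $U$ is needed at all: the damping term $-3\langle U_t^2\varrho_t,\varrho_t\rangle$ in the equation for $\varrho_t=\cJ_{s,t}^{(2)}(\xi,\xi')$, which your variation-of-constants formula discards, can absorb the entire source by Young's inequality,
\begin{equation*}
-3\langle U_t^2\varrho_t,\varrho_t\rangle-6\langle U_t\,\cJ_{s,t}\xi\,\cJ_{s,t}\xi',\varrho_t\rangle
\leq 3\int_{\mT}\big((\cJ_{s,t}\xi)(z)\big)^2\big((\cJ_{s,t}\xi')(z)\big)^2\,\dif z ,
\end{equation*}
so that $U_t$ cancels from the energy inequality entirely. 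The remaining source is bounded, using the one-dimensional embedding $H^1\hookrightarrow L^\infty$ applied to the \emph{Jacobian} factor and the contraction $\|\cJ_{s,t}\xi'\|\leq\|\xi'\|$, by $C\|\cJ_{s,t}\xi\|_1^2\|\xi'\|^2$, whose time integral is controlled \emph{pathwise} by your own smoothing estimate. Gronwall then yields the deterministic bound $\|\cJ_{s,t}^{(2)}(\xi,\xi')\|^2\leq C_T\|\xi\|^2\|\xi'\|^2$ for all $0\leq s<t\leq T$, and the expectation in (\ref{p-13}) becomes trivial, with $C_{T,p}$ independent of $\tau$ and $U_0$. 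This direct energy argument is the paper's proof; your Duhamel route cannot recover it because applying $\|\cJ_{r,t}\|_{\cL(H,H)}\leq 1$ to the source forfeits the sign of the cubic-linearization term before it can be used.
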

\begin{proof}
  By (\ref{p-37}), for any $\xi\in H,$ we deduce that
\begin{eqnarray*}
 \dif    \|\cJ_{s,t}\xi\|^2& =& - 2 \langle A \cJ_{s,t}\xi, \cJ_{s,t}\xi\rangle \dif t
 +2 \langle \cJ_{s,t}\xi, \cJ_{s,t}\xi \rangle \dif t
 -\langle 6U_t^2\cJ_{s,t}\xi,\cJ_{s,t}\xi \rangle\dif t
 \\ &\leq &  -2\|\cJ_{s,t}\xi\|_1^2\dif t+2 \|\cJ_{s,t}\xi\|^2\dif t
\end{eqnarray*}
which implies
\begin{eqnarray}
\label{zp-1}
  \frac{\dif }{\dif t}\|\cJ_{s,t}\xi\|^2\leq 0
\end{eqnarray}
and
\begin{eqnarray}\label{pp-5}
 \|\cJ_{s,t}\xi\|^2+ 2 \int_s^t \|\cJ_{s,r}\xi\|_1^2 \dif \leq  \|\xi\|^2 e^{2(t-s)}.
\end{eqnarray}
By (\ref{zp-1}), one arrives at the first part of    (\ref{p-38}).
Moreover,   the second  part of    (\ref{p-38}) follows   by duality. It remains to prove  (\ref{p-13}).

For  any $U_0,\xi,\xi'\in H$,  the function $\varrho_t:=\cJ_{s,t}^{(2)}(\xi,\xi')\in H$ is the solution of
\begin{eqnarray*}
  \partial_t \varrho_t +A \varrho_t  -\varrho_t+3U_t^2\varrho_t+6U_t\cJ_{s,t}\xi \cJ_{s,t}\xi'=0,~~~~\quad \varrho(s)=0.
\end{eqnarray*}
Then
\begin{eqnarray*}
  \partial_t\|\varrho_t\|^2_H +2\langle A \varrho_t,\varrho_t \rangle -2 \|\varrho_t\|^2
  +\langle 3U_t^2\varrho_t+6U_t\cJ_{s,t}\xi \cJ_{s,t}\xi',\varrho_t\rangle =0.
\end{eqnarray*}
By Young's inequality and Sobolev embedding theorem, it yields
\begin{eqnarray*}
 \partial_t\|\varrho_t\|^2 & \leq &  -3 \langle U_t^2\varrho_t,\varrho_t \rangle - \langle 6U_t\cJ_{s,t}\xi \cJ_{s,t}\xi',\varrho_t\rangle+2\|\varrho_t\|^2
 \\ &\leq & 12 \int_{\mT}\big((\cJ_{s,t}\xi) (z)\big)^2\big((\cJ_{s,t}\xi') (z)\big)^2\dif z
 +2\|\varrho_t\|^2
 \\ &\leq & 12  \|\cJ_{s,t}\xi\|_{\infty}^2 \|\cJ_{s,t}\xi'\|^2+2\|\varrho_t\|^2
 \\ &\leq & 12  \|\cJ_{s,t}\xi\|_1^2\|\xi'\|^2+2\|\varrho_t\|^2.
\end{eqnarray*}
Thus,  by (\ref{pp-5}), we have
\begin{eqnarray*}
&& \|\cJ_{s,t}^{(2)}(\xi,\xi')\|^2
 \leq C_T  \int_s^t \|\cJ_{s,r}\xi\|_1^2  \dif r \|\xi'\|^2
\leq  C_T\|\xi\|^2\|\xi'\|^2
\end{eqnarray*}
which completes the proof of   (\ref{p-13}).
\end{proof}
\begin{lemma}\label{p-10}
 For any $p\geq 2,T\geq 0,$  there exists $C=C_{p,T}$ such that
 \begin{eqnarray*}
   \mE \sup_{t\in [T/2,T]} \|\partial_t K_{t,T}\xi\|^p_{H^{-2}} \leq C  \|\xi\|^p
 \end{eqnarray*}
\end{lemma}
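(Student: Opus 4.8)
The plan is to exploit the explicit backward equation for $K_{t,T}\xi=\cK_{t,T}\xi$ and to estimate $\partial_t K_{t,T}\xi$ term by term in the weak norm $H^{-2}$, where the smoothing of $A=-\Delta$ is most favourable. Recall from (\ref{p-3}) that, with $t$ in the role of the first (backward) variable and $T$ fixed, and using that $N(U)=-U+U^3$ gives $(\nabla N(U_t))^*=-1+3U_t^2$, the function $\varrho^*(t):=K_{t,T}\xi$ solves
\begin{eqnarray*}
\partial_t K_{t,T}\xi &=& A K_{t,T}\xi-K_{t,T}\xi+3U_t^2 K_{t,T}\xi,\qquad K_{T,T}\xi=\xi.
\end{eqnarray*}
Hence it suffices to bound each of the three terms on the right-hand side in $H^{-2}$ and then take $p$-th moments of the supremum over $[T/2,T]$.

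For the two linear terms I would use that $A$ maps $H$ boundedly into $H^{-2}$: since $Ae_k=\gamma_k e_k$ and $(1+\gamma_k)^{-2}\gamma_k^2\leq 1$, one has $\|AK_{t,T}\xi\|_{H^{-2}}\leq \|K_{t,T}\xi\|$, while trivially $\|K_{t,T}\xi\|_{H^{-2}}\leq \|K_{t,T}\xi\|$; both are controlled by the pathwise contraction estimate $\|K_{t,T}\xi\|=\|\cK_{t,T}\xi\|\leq\|\xi\|$ of Lemma \ref{p-29}. For the cubic term I would pass to the dual formulation $\|f\|_{H^{-2}}=\sup_{\|\phi\|_{H^2}\leq1}|\langle f,\phi\rangle|$: using the one-dimensional embedding $H^2(\mT)\hookrightarrow L^\infty(\mT)$ together with Cauchy--Schwarz,
\begin{eqnarray*}
\Big|\int_{\mT}U_t^2\,(K_{t,T}\xi)\,\phi\,\dif z\Big| &\leq& \|\phi\|_\infty\,\|U_t\|_{L^4}^2\,\|K_{t,T}\xi\|\leq C\|U_t\|_{L^4}^2\|\xi\|,
\end{eqnarray*}
so that $\|3U_t^2K_{t,T}\xi\|_{H^{-2}}\leq C\|U_t\|_{L^4}^2\|\xi\|$. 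Collecting the three contributions gives the pathwise bound $\|\partial_tK_{t,T}\xi\|_{H^{-2}}\leq C(1+\|U_t\|_{L^4}^2)\|\xi\|$.

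It then remains to raise to the power $p$, take the supremum over $t\in[T/2,T]$, and take expectations:
\begin{eqnarray*}
\mE\sup_{t\in[T/2,T]}\|\partial_tK_{t,T}\xi\|_{H^{-2}}^p &\leq& C\|\xi\|^p\Big(1+\mE\sup_{t\in[T/2,T]}\|U_t\|_{1}^{2p}\Big),
\end{eqnarray*}
where I have used the one-dimensional embedding $\|U_t\|_{L^4}\leq C\|U_t\|_1$ (valid on the mean-zero space by Poincar\'e). Since the interval is bounded away from $0$, Lemma \ref{8-1} with $n=1$ and exponent $2p$ bounds the last expectation by a finite constant $C_{p,T}$ (the factor $(T/2)^{-\lambda}+1$ being harmless), which yields the claim. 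The only mildly delicate step is the cubic term: one must place all the derivative loss on the test function $\phi$ via duality and use that on the torus $H^2$ controls the sup norm, so that the rough factor $U_t^2$ is only ever integrated against $L^2$ functions; everything else is either the bounded-multiplier estimate for $A$ or the already-established contraction of $\cK$ and the spatial-regularity moment bounds for $U$.
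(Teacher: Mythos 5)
Your proof is correct and takes essentially the same approach as the paper: write out the backward equation (\ref{p-3}) for $K_{t,T}\xi$, bound $\|AK_{t,T}\xi\|_{H^{-2}}$ and $\|K_{t,T}\xi\|_{H^{-2}}$ by $\|\xi\|$ via the pathwise contraction of Lemma \ref{p-29}, and handle the cubic term by $H^{-2}$-duality, placing $U_t^2$ in $L^2$ and the test function in $L^\infty$ through the one-dimensional Sobolev embedding. The only noteworthy difference is at the final step: the paper cites Lemma \ref{4-1} for the moments of $\|U_t^2\|$, while you invoke Lemma \ref{8-1} with $n=1$, which is in fact the more careful choice, since the supremum over $t\in[T/2,T]$ inside the expectation is exactly what Lemma \ref{8-1} controls.
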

\begin{proof}
Noting  that   $\rho_t^*=K_{t,T}\xi$ satisfies the following equation
 \begin{eqnarray*}
 && \partial_t \rho^* =  A\rho^*+(\nabla N(U_t))^{*}\rho^*=-(\nabla F(U_t))^*\rho^*,\quad \rho^*(T)=\xi,
\end{eqnarray*}
and
 \begin{eqnarray*}
  \|A\rho^*\|_{H^{-2}} & \leq  & \|\rho^*\|,
\\  \|(\nabla N(U(t)))^{*}\rho^*\|_{H^{-2}}
& \leq &   \sup_{\|\psi\|_{H^{2}}\leq 1}|\langle (\nabla N(U(t)))^{*}\rho^* , \psi\rangle|
\\ &\leq & \sup_{\|\psi\|_{H^{2}}\leq 1}|\langle\rho^* ,  (\nabla N(U(t)))\psi\rangle|
 \\ &  \leq  &  C  \sup_{\|\psi\|_{H^{2}}\leq 1} \|\rho^*\| \cdot \big[ \|U^2(t)\|+1\big]\cdot \|\psi\|_{\infty}
\\   &\leq &  C  \sup_{\|\psi\|_{H^{2}}\leq 1} \|\rho^*\| \cdot \big[ \|U^2(t)\|+1\big] \cdot  \|\psi\|_{1},
\end{eqnarray*}
by Lemmas \ref{4-1},  \ref{p-29}, we finish the proof of this lemma.
\end{proof}

For any $N\geq 1,$ define
\begin{eqnarray*}
  H_N:=span\{ e_k~:~ 0<|k|\leq N \},
\end{eqnarray*}
along with the associated projection operators
\begin{eqnarray*}
  P_N:H\rightarrow H_N \text{ the orthogonal projection onto $H_N$ }, ~~Q_N:=I-P_N.
\end{eqnarray*}

%
\begin{lemma}
  For every $p\geq 1,T>0,\delta>0$,  there exists    $N_*=N_*(p,T,\delta)$ such that  for any $N\geq N_*$ one has
  \begin{align}
  \label{6-2}
    \mE \|Q_N\cJ_{0,T}\|_{\cL\left( H,H\right)}^p \leq  \delta,\quad
 \mE \|\cJ_{0,T}Q_N \|_{\cL\left( H,H\right)}^p \leq  \delta
(\|U_0^p\|^2+1).
  \end{align}
  Here $\|\cdot \|_{\cL(X,Y)}$ denotes the operator norm of linear map between
  the given Hilbert spaces $X$ and $Y$.
\end{lemma}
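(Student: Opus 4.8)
The plan is to deduce both inequalities from a single smoothing estimate, namely an $L^p(\Omega)$ bound on the operator norm of $\cJ_{0,T}$ (resp. of its adjoint) viewed as a map into $V=V^1$. The mechanism is that $Q_N$ annihilates low frequencies: since $\gamma_k=|k|^2>N^2$ on the range of $Q_N$, one has $\|Q_Nv\|\leq N^{-1}\|v\|_1$ for every $v\in V$, whence $\|Q_N\cJ_{0,T}\|_{\cL(H,H)}\leq N^{-1}\|\cJ_{0,T}\|_{\cL(H,V)}$. For the second norm I would first use that $Q_N$ is self-adjoint and that $\cK_{0,T}=\cJ_{0,T}^*$ (the backward flow of (\ref{p-3})), so that $\|\cJ_{0,T}Q_N\|_{\cL(H,H)}=\|(\cJ_{0,T}Q_N)^*\|_{\cL(H,H)}=\|Q_N\cK_{0,T}\|_{\cL(H,H)}\leq N^{-1}\|\cK_{0,T}\|_{\cL(H,V)}$. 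It then suffices to bound the $p$-th moments of $\|\cJ_{0,T}\|_{\cL(H,V)}$ (uniformly in $U_0$) and of $\|\cK_{0,T}\|_{\cL(H,V)}$ (with a $U_0$-dependent prefactor), and to choose $N_*$ so large that $N_*^{-p}$ times the resulting constant is at most $\delta$.

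For the first inequality I would split $\cJ_{0,T}=\cJ_{T/2,T}\,\cJ_{0,T/2}$ and use the contraction $\|\cJ_{0,T/2}\|_{\cL(H,H)}\leq1$ from Lemma \ref{p-29}, giving $\|Q_N\cJ_{0,T}\|_{\cL(H,H)}\leq N^{-1}\|\cJ_{T/2,T}\|_{\cL(H,V)}$; the split is precisely what lets me avoid the singularity at $t=0$ and keep the bound independent of $U_0$. Writing $J_t:=\cJ_{T/2,t}\xi$ and differentiating $\|J_t\|_1^2$ via (\ref{p-37}), I would absorb the cubic term through $3|\langle U_t^2J_t,AJ_t\rangle|\leq\tfrac12\|J_t\|_2^2+C\|U_t\|_\infty^4\|J_t\|^2$, multiply by $(t-T/2)$ and integrate, using $\|J_t\|\leq\|\xi\|$ and $\int_{T/2}^T\|J_r\|_1^2\,dr\leq C_T\|\xi\|^2$ (from (\ref{pp-5})). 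This yields $\|\cJ_{T/2,T}\|_{\cL(H,V)}^2\leq C_T\big(1+\sup_{t\in[T/2,T]}\|U_t\|_\infty^4\big)$. Since $\|U_t\|_\infty\leq C\|U_t\|_1$ in one dimension, Lemma \ref{8-1} with $s=T/2$ bounds $\mE\sup_{t\in[T/2,T]}\|U_t\|_1^{m}$ by a constant depending only on $m$ and $T$ and \emph{not} on $U_0$; taking the $p$-th moment gives $\mE\|Q_N\cJ_{0,T}\|_{\cL(H,H)}^p\leq N^{-p}C_{p,T}$, uniform in $U_0$.

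For the second inequality I would run the same energy computation for the backward flow. Setting $w(\tau):=\cK_{T-\tau,T}\xi$ transforms (\ref{p-3}) into the forward, dissipative equation $\partial_\tau w=-Aw+w-3U_{T-\tau}^2w$, which is structurally identical to (\ref{p-37}); hence $\|w(\tau)\|\leq\|\xi\|$ and, multiplying $\|w\|_1^2$ by $\tau$ and integrating, $\|\cK_{0,T}\xi\|_1^2=\|w(T)\|_1^2\leq C_T\|\xi\|^2\big(1+\int_0^T\|U_s\|_\infty^4\,ds\big)$. To put this in the advertised form I would use the Agmon interpolation $\|U_s\|_\infty^4\leq C\|U_s\|^2\|U_s\|_1^2$, so that $\int_0^T\|U_s\|_\infty^4\,ds\leq C\big(\sup_{s\leq T}\|U_s\|^2\big)\int_0^T\|U_s\|_1^2\,ds\leq C\big(\sup_{t\leq T}\cE(t)\big)^2$. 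Lemma \ref{2-1} then bounds $\mE\big(\sup_t\cE(t)\big)^p$ by $C(\|U_0\|^{2p}+1)$, and since $\|U_0\|^{2p}=\|U_0\|_{L^2}^{2p}\leq C\|U_0\|_{L^{2p}}^{2p}=C\|U_0^p\|^2$ on the torus, I obtain $\mE\|\cJ_{0,T}Q_N\|_{\cL(H,H)}^p\leq N^{-p}C_{p,T}(\|U_0^p\|^2+1)$.

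The main obstacle is the $U_0$-uniformity demanded by the first inequality. A direct smoothing bound for $\cJ_{0,T}$ would involve $\int_0^T\|U_s\|_\infty^4\,ds$, whose moments near $s=0$ inevitably depend on $\|U_0\|$; it is only the factorization $\cJ_{0,T}=\cJ_{T/2,T}\cJ_{0,T/2}$, combined with the \emph{dissipative} moment estimates of Lemma \ref{8-1} on $[T/2,T]$ (which are uniform in the data thanks to the $-U^3$ term), that removes this dependence. The remaining technical points to verify carefully are that the cubic term in the $H^1$ energy identity is genuinely absorbable by the $-\|\cdot\|_2^2$ dissipation, and that the Agmon interpolation in the second estimate indeed delivers a quantity controlled by $\cE$ and hence, through Lemma \ref{2-1}, by $\|U_0^p\|^2+1$.
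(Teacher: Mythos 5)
Your proposal is correct, and for the second inequality it takes a genuinely different route from the paper. For the first inequality the two arguments are morally the same: both exploit parabolic smoothing of the linearized flow together with the $U_0$-uniform moment bounds of Lemma \ref{8-1} away from $t=0$; you implement this via the factorization $\cJ_{0,T}=\cJ_{T/2,T}\,\cJ_{0,T/2}$ and the pathwise contraction of Lemma \ref{p-29}, while the paper runs a weighted energy estimate for $t^m\|\cJ_{0,t}\xi\|_1^2$ on all of $[0,T]$, the weight $t^m$ playing exactly the role of your restriction to $[T/2,T]$ (it makes the singular factor $s^{-\lambda}$ from Lemma \ref{8-1} integrable). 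For the second inequality, however, the paper never passes to the adjoint: it studies $\xi_t=\cJ_{0,t}Q_N\xi$ directly, splits it as $\xi_t=P_N\xi_t+Q_N\xi_t$, shows the high-frequency part decays like $e^{-cN^2t}$ up to a forcing controlled by $\|U_t\|_1^2\|\xi\|$, and controls the low-frequency part through the cross term $\langle \xi_t^l,3U_t^2\xi_t^h\rangle$, splitting the time integral into a short initial interval (handled by Lemma \ref{2-1}, which is where $\|U_0^p\|^2$ enters) and the remainder (handled by Lemma \ref{8-1} and the decay in $N$). Your argument instead uses $\|\cJ_{0,T}Q_N\|_{\cL(H,H)}=\|Q_N\cK_{0,T}\|_{\cL(H,H)}\leq N^{-1}\|\cK_{0,T}\|_{\cL(H,V)}$, which is legitimate since the paper defines $\cK_{s,t}=\cJ_{s,t}^*$ as the solution of the backward system (\ref{p-3}) and $Q_N$ is an orthogonal projection; after time reversal the adjoint flow satisfies the same dissipative equation as (\ref{p-37}), so a single weighted smoothing estimate finishes the job, the $U_0$-dependence entering through $\int_0^T\|U_s\|_\infty^4\,\dif s\leq C\big(\sup_t\cE(t)\big)^2$, Lemma \ref{2-1}, and H\"older's inequality $\|U_0\|^{2p}\leq C_p\|U_0^p\|^2$ on the torus. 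Your route is shorter and more symmetric (both halves reduce to the same $\cL(H,V)$ bound), and it makes transparent why the first bound is uniform in $U_0$ while the second is not: for $\cJ$ the smoothing interval can be taken away from $t=0$, whereas for $\cK_{0,T}$ it necessarily involves times near $s=0$. The paper's route avoids any estimate on the adjoint flow and yields the quantitative $e^{-cN^2t}$ decay of the high modes, at the price of the more delicate two-parameter argument (first a small time $t$, then $N$ large). The technical points you flagged do check out: the cubic term is absorbed via $6\|U\|_\infty^2\|J\|\,\|J\|_2\leq\|J\|_2^2+9\|U\|_\infty^4\|J\|^2$, and Agmon's inequality indeed bounds $\int_0^T\|U_s\|_\infty^4\,\dif s$ by $C\big(\sup_t\cE(t)\big)^2$.
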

\begin{proof}
For any $m\geq 1,$ by the  $It\^{o}$ formula in \cite[Theorem 7.7.5]{KS2012} and following similar arguments in the proof of \cite[Proposition 2.4.12]{KS2012}, it holds that
\begin{eqnarray*}
  && t^m\langle A\cJ_{0,t}\xi,  \cJ_{0,t}\xi\rangle =\int_0^t \Big[ ms^{m-1}\langle A\cJ_{0,s}\xi,  \cJ_{0,s}\xi\rangle+  2s^m \langle  A\cJ_{0,s}, \partial_s
   \cJ_{0,s}\xi\rangle \Big] \dif s
   \\ && = \int_0^t \Big[ ms^{m-1}\langle A\cJ_{0,s}\xi,  \cJ_{0,s}\xi\rangle+2s^m \langle  A\cJ_{0,s},  -A\cJ_{0,s}\xi +\cJ_{0,s}\xi-3U_s^2\cJ_{0,s}\xi\rangle \Big] \dif s
   \\ && = \int_0^t\Big[  (2s^m+ms^{m-1})\|\cJ_{0,s}\xi\|_1^2-2s^m\|\cJ_{0,s}\xi\|_2^2-6s^m\langle A\cJ_{0,s}\xi,U_s^2\cJ_{0,s}\xi \rangle \Big]\dif s
   \\ && \leq \int_0^t \Big[ (2s^m+ms^{m-1})\|\cJ_{0,s}\xi\|_1^2-2s^m\|\cJ_{0,s}\xi\|_2^2+6s^m\|\cJ_{0,s}\xi\|_2\| U_s \|_\infty^2 \| \cJ_{0,s}\xi\|\Big] \dif s
      \\ && \leq  \int_0^t \Big[ (2s^m+ms^{m-1})\|\cJ_{0,s}\xi\|_1^2-2s^m\|\cJ_{0,s}\xi\|_2^2+6s^m\big[\frac{1}{6}\|\cJ_{0,s}\xi\|_2^2+6\|U_s\|_1^4 \|\cJ_{0,s}\xi\|^2\big]\Big]\dif s
      \\ && \leq   \int_0^t \Big[(2s^m+ms^{m-1})\|\cJ_{0,s}\xi\|_1^2 +36s^m \|U_s\|_1^4\|\cJ_{0,s}\xi\|^2\Big] \dif s
      \\ &&\leq C_{T,m}\|\xi\|^2+ C_{T} \int_0^t  s^m \|U_s\|_1^4\|\xi\|^2  \dif s,
\end{eqnarray*}
where in the last inequality, we have used (\ref{pp-5}).
By the above inequality and  Lemma \ref{8-1}, there exists a $m>1$ such that
\begin{eqnarray}
\label{6-3}
  \mE \big( t^m\|\cJ_{0,t}\xi\|_1^2\big)^p \leq C_{T,m}\|\xi\|^{2p} ,\quad \forall t\in [0,T].
\end{eqnarray}
Fix this $m.$
Noting (\ref{6-3}) and  $\|Q_Nv\|\leq \frac{1}{N}\|v\|_1,\forall v\in V,$ we get
\begin{eqnarray*}
 \mE \|Q_N\cJ_{0,T}\xi\|^p\leq \frac{1}{N^p }\mE \|\cJ_{0,T}\xi\|_1^p \leq
 \frac{1}{N^p }\big(\mE \|\cJ_{0,T}\xi\|_1^{2p}\big)^{1/2}
 \leq  \frac{C_{T,m,p}\|\xi\|^p }{N^p \cdot T^{m/2}} ,
\end{eqnarray*}
which implies   the first part of (\ref{6-2}).

Now, we consider the second part of (\ref{6-2}).
For any $\xi\in H,$ let $\tilde{\xi}=Q_N\xi,\xi_t=\cJ_{0,t}\tilde{\xi}.$
Then  $\xi_t$ satisfies the following equation.
\begin{eqnarray}
\label{12-1}
\Bigg\{
\begin{split}
  & \partial_t  \xi_t=- A\xi_t +\xi_t-3U_t^2\xi_t
  \\ & \xi_0=\tilde{\xi}.
  \end{split}
\end{eqnarray}
Denote   $\xi_t^h=Q_N\xi_t,\xi_t^l=P_N\xi_t.$
One easily sees that
\begin{eqnarray*}
\Bigg\{
\begin{split}
  & \partial_t  \xi_t^h =- A\xi_t^h +\xi_t^h-Q_N(3U_t^2\xi_t)
  \\ & \xi_0^h=\tilde{\xi}.
  \end{split}
\end{eqnarray*}
and
\begin{eqnarray*}
  \partial_t\|\xi_t^h\|^2 &=&2  \langle \xi_t^h,  \partial_t\xi_t\rangle
   = 2\langle \xi_t^h,  - A\xi_t +\xi_t-3U_t^2\xi_t \rangle
   \\ &\leq & \big(2-2N^2)\|\xi_t^h\|^2+\|\xi_t^h\|^2+C\|\cJ_{0,t}\tilde{\xi}\|^2\|U_t\|_\infty^2
   \   \\ &\leq & (3-2N^2)\|\xi_t^h\|^2+C_T\|U_t\|_1^2\|\tilde{\xi}\|^2.
\end{eqnarray*}
By Gronwall  inequality, for any $s\leq t$, we have
\begin{eqnarray*}
\|\xi_t^h\|^2&\leq & \|\xi_s^h\|^2e^{-(2N^2-3)(t-s)} +e^{-(2N^2-3)t} \int_s^t \big(C_Te^{(2N^2-3)r}\|U_r\|_1^2\|\tilde{\xi}\|^2\big)\dif r
\\ &\leq & \|\xi_s^h\|^2e^{-(2N^2-3)(t-s)} +C_T \frac{1}{2N^2-3} \sup_{r\in [s,t]} \|U_r\|_1^2 \|\tilde{\xi}\|^2 .
\end{eqnarray*}
Thus,  for any $p\geq 2$, it holds that
\begin{eqnarray*}
\label{100-1}
\mE \|\xi_t^h\|^p&\leq & C_{T,p} \mE \|\xi_s^h\|^p e^{-(2N^2-3)(t-s)\cdot \frac{p}{2}} +C_{T,p} \frac{1}{2N^2-3} \mE \sup_{r\in [s,t]} \|U_r\|_1^p \|\tilde{\xi}\|^p.
\end{eqnarray*}
In the above inequality, let $s=\frac{t}{2}$.  By Lemma \ref{8-1}, for some $\gamma>0,$ we have
 \begin{eqnarray}
\label{100-1}
\mE \|\xi_t^h\|^p\leq  C_{T,p}\|\tilde{\xi}\|^p\big[e^{-\frac{N^2t}{4}}+\frac{1}{2N^2-3}t^{-\gamma}\big]
\end{eqnarray}
which yields
\begin{eqnarray}
\label{7-1}
 \mE \|\xi_T^h\|^p\leq \frac{\delta}{2} \|\xi\|^p
\end{eqnarray}
for $N$ big enough.

Now, we consider the estimate of   $\xi_T^l$.  For any  $0\leq t\leq T,$
one sees that $\xi_t^l$ satisfies the following equation
\begin{eqnarray*}
\Bigg\{
\begin{split}
  & \partial_t  \xi_t^l =- A\xi_t^l+\xi_t^l-P_N(3U_t^2\xi_t)
  \\ & \xi_t^l|_{t=0}=0.
  \end{split}
\end{eqnarray*}
We claim that  for any $\delta>0,$ there exists    $N_*=N_*(p,T,\delta)$ such that  for any $N\geq N_*$ one has
  \begin{align}
  \label{z-1}
 \mE \|\xi_T^\ell \|^p \leq  \frac{\delta}{2}
(\|U_0^p\|^2+1)\|\xi\|^p.
  \end{align}
 Once we have proved this,  combining (\ref{z-1})  with (\ref{7-1}), we
  obtain  the second part of (\ref{6-2}).

  Now we give a proof of  (\ref{z-1}).
Obviously, we have
\begin{eqnarray}
\label{6-4}
\begin{split}
  & \partial_t\|\xi_t^l\|^2 =  \langle \xi_t^l,  \partial_t\xi_t\rangle
\leq  -\langle\xi_t^l, 3U_t^2\xi_t \rangle=-\langle\xi_t^l, 3U_t^2(\xi_t^l+\xi_t^h) \rangle
  \\ & \leq | \langle \xi_t^l,  3U_t^2\xi_t^h\rangle|
    \leq  3\|U_t\|_\infty^2   \| \xi_t^l\| \|\xi_t^h\|.
    \end{split}
\end{eqnarray}
On the set   $\{ \|\xi_T^\ell \|\neq 0 \}$, we   define
\begin{eqnarray*}
  \tau=\sup \left\{t\in [0,T], \|\xi_t^\ell \|=0 \right\}.
\end{eqnarray*}
Hence, for $t\in (\tau,T]$, by (\ref{6-4}),  it holds that
\begin{eqnarray*}
 \partial_t \|\xi_t^l\| =\partial_t\sqrt{\|\xi_t^l\|^2}=\frac{ \partial_t\|\xi_t^l\|^2}{2 \sqrt{\|\xi_t^l\|^2}}
 \leq C\|U_t\|_\infty^2  \|\xi_t^h\|.
\end{eqnarray*}
Therefore,
\begin{eqnarray*}
  \|\xi_T^l\|\leq  \|\xi_\tau^l\|+C \int_\tau^T \|U_s\|_\infty^2  \|\xi_s^h\|\dif s\leq  C \int_0^T \|U_s\|_{\frac{1}{2}}^2  \|\xi_s^h\|\dif s.
\end{eqnarray*}
which implies
\begin{eqnarray}
\label{2000-3}
\begin{split}
 &  \mE \|\xi_T^l\|^{p} \leq C_{p}\mE \left(\int_0^t\|U_r\|_{\frac{1}{2}}^2 \|\xi_r^h\| \dif r\right)^{p}
+C_{p}\mE \left(\int_t^T\|U_r\|_{\frac{1}{2}}^2 \|\xi_r^h\| \dif r\right)^{p}
\\ & \leq C_{p}\mE \left(\int_0^t\|U_r\| \cdot \|U_r\|_1\cdot  \|\xi_r^h\| \dif r\right)^{p}
+C_{p}\mE \left(\int_t^T\|U_r\|_{\frac{1}{2}}^2 \|\xi_r^h\| \dif r\right)^{p}
\\ & :=I_1+I_2,
\end{split}
\end{eqnarray}
where  $t>0$ is a small parameter to be adjusted later.
As for $I_1,$ by (\ref{p-38}), Lemmas  \ref{2-1},\ref{p-29} and H\"older inequality, we have
\begin{align*}
  I_1 &\leq   C_{T,p}  \mE\Big[ \sup_{s\in [0,t]}\|U_s\|^p  \cdot  \big(  \int_0^t\|U_r\|_1  \dif r\big)^{p}\Big]\cdot  \|\xi\|^p
  \\ &\leq    C_{T,p}  \Big[ \mE \sup_{s\in [0,t]}\|U_s\|^{2p} \Big]^{1/2} \cdot  \Big[  \mE  \big(  \int_0^t\|U_r\|_1  \dif r\big)^{2p}\Big]^{1/2} \cdot \|\xi\|^p
 \\ & \leq  C_{T,p}   \Big[ \mE \sup_{s\in [0,t]}\|U_s\|^{2p} \Big]^{1/2} \cdot  \Big[ t^p \cdot   \mE  \big(\int_0^t\|U_r\|_1^2   \dif r\big)^{p}\Big]^{1/2}\cdot  \|\xi\|^p
 \\ &\leq  C_{T,p}t^{p/2}[\|U_0\|^{2p}+1]\|\xi\|^p.
\end{align*}
Setting   $t$ small enough, one arrives at that
\begin{eqnarray}
\label{2000-2}
\begin{split}
&I_1 \leq \frac{\delta}{4} (1+\|U_0^p\|^2)\|\xi\|^p.
  \end{split}
\end{eqnarray}
Fix this $t.$
Since
\begin{eqnarray*}
  && I_2\leq C_{T,p} \mE \left( \sup_{r\in [t,T]}\|U_r\|_{\frac{1}{2}}^{2p}\cdot  \big( \int_t^T\|\xi_r^h\| \dif r\big)^{p}\right)
  \\ &&\leq C_{T,p} \left( \mE  \sup_{r\in [t,T]}\|U_r\|_{\frac{1}{2}}^{4p} \right)^{1/2} \left(\mE \big( \int_t^T\|\xi_r^h\| \dif r\big)^{2p}\right)^{1/2}
    \\ &&\leq C_{T,p} \left( \mE  \sup_{r\in [t,T]}\|U_r\|_{\frac{1}{2}}^{4p} \right)^{1/2} \left(\mE \int_t^T\|\xi_r^h\|^{2p} \dif r\right)^{1/2},
\end{eqnarray*}
by (\ref{100-1}) and Lemma \ref{8-1},  we can   choose    $N$ big enough such that
\begin{eqnarray}
\label{20-1}
 I_2 \leq \frac{\delta}{4}\|\xi\|^p.
\end{eqnarray}
Combining (\ref{20-1}), (\ref{2000-2}) and (\ref{2000-3}), we obtain  the second part of (\ref{z-1}).
\end{proof}

 Using  the same method as \cite[Lemmas A.6,A.7]{FGRT} and \cite{martin}, by  Lemma \ref{p-29},  the following two    lemmas hold.

\begin{lemma}
\label{rr-1}
  For $0<s<t$, we have
  \begin{eqnarray*}
    \|\cA_{s,t}\|_{\cL\left(L^2([s,t],\mR^m),H\right)}\leq C\left(\int_s^t \| \cJ_{r,t}\|_{\cL\left( H,H\right)}^2\dif r\right)^{1/2}
  \end{eqnarray*}
   where  $C$ is a constant  independent of $s,t.$ Moreover, for any $\beta>0$, the following hold
  \begin{eqnarray*}
    \|\cA_{s,t}^* (\cM_{s,t}+I\beta)^{-1/2}\|_{\cL\left(H, L^2([s,t],\mR^m)\right)}& \leq & 1,
    \\
    \| (\cM_{s,t}+I\beta)^{-1/2}\cA_{s,t}\|_{\cL\left( L^2([s,t],\mR^m),H\right)}& \leq & 1,
        \\
    \| (\cM_{s,t}+I\beta)^{-1/2}\|_{\cL\left( H,H\right)}& \leq & \beta^{-1/2},
    \\
    \| (\cM_{s,t}+I\beta)^{-1}\|_{\cL\left( H,H\right)}& \leq & \beta^{-1}.
  \end{eqnarray*}

\end{lemma}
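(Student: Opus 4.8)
The plan is to dispatch the first inequality by a direct estimate and to obtain the remaining four bounds from elementary spectral calculus for the self-adjoint operator $\cM_{s,t}$.

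First I would establish the bound on $\|\cA_{s,t}\|$. Unwinding the definition $\cA_{s,t}v=\int_s^t\cJ_{r,t}Gv(r)\,\dif r$, for any $v\in L^2([s,t],\mR^m)$ the triangle inequality for the Bochner integral gives $\|\cA_{s,t}v\|\leq\int_s^t\|\cJ_{r,t}\|_{\cL(H,H)}\,\|G\|\,\|v(r)\|\,\dif r$, and the Cauchy--Schwarz inequality in the variable $r$ then yields $\|\cA_{s,t}v\|\leq\|G\|\big(\int_s^t\|\cJ_{r,t}\|_{\cL(H,H)}^2\,\dif r\big)^{1/2}\|v\|_{L^2}$. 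Since $G$ has finite rank by Hypothesis \ref{pp-2}(iii) its operator norm $\|G\|$ is finite, and Lemma \ref{p-29} guarantees that each $\cJ_{r,t}$ is a bounded operator, so the integral makes sense; this is exactly the claimed estimate with $C=\|G\|$.

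Next I would record the two structural facts that drive the remaining bounds. Because $\cM_{s,t}=\cA_{s,t}\cA_{s,t}^*$, one has $\langle\cM_{s,t}\xi,\xi\rangle=\|\cA_{s,t}^*\xi\|^2\geq0$ for every $\xi\in H$, so $\cM_{s,t}$ is a bounded, self-adjoint, nonnegative operator; consequently $\cM_{s,t}+\beta I$ has spectrum contained in $[\beta,\infty)$ and is boundedly invertible. The two pure resolvent bounds then follow immediately from the spectral theorem via the scalar estimates $(\lambda+\beta)^{-1/2}\leq\beta^{-1/2}$ and $(\lambda+\beta)^{-1}\leq\beta^{-1}$, valid for all $\lambda\geq0$. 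For the two mixed bounds I would compute directly: for $\xi\in H$, setting $u=(\cM_{s,t}+\beta I)^{-1/2}\xi$ and using $\cA_{s,t}\cA_{s,t}^*=\cM_{s,t}$ together with the fact that $\cM_{s,t}$ commutes with the self-adjoint operator $(\cM_{s,t}+\beta I)^{-1/2}$, one obtains $\|\cA_{s,t}^*(\cM_{s,t}+\beta I)^{-1/2}\xi\|_{L^2}^2=\langle\cM_{s,t}u,u\rangle=\langle\cM_{s,t}(\cM_{s,t}+\beta I)^{-1}\xi,\xi\rangle$. By functional calculus $\cM_{s,t}(\cM_{s,t}+\beta I)^{-1}=f(\cM_{s,t})$ with $f(\lambda)=\lambda/(\lambda+\beta)\in[0,1)$, so this quantity is at most $\|\xi\|^2$, which gives the first mixed bound. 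The second mixed bound is its adjoint: since $(\cM_{s,t}+\beta I)^{-1/2}$ is self-adjoint, $\big((\cM_{s,t}+\beta I)^{-1/2}\cA_{s,t}\big)^*=\cA_{s,t}^*(\cM_{s,t}+\beta I)^{-1/2}$, and an operator and its adjoint share the same norm.

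The computations are entirely routine. The only points requiring care are verifying that $\cA_{s,t}$ and $\cA_{s,t}^*$ are genuinely bounded operators — which relies on $\|G\|<\infty$ and the pathwise bounds of Lemma \ref{p-29} — so that $\cM_{s,t}$ is a bounded self-adjoint operator to which the functional calculus legitimately applies, and correctly bookkeeping the adjoint relation between the two mixed operators. I do not anticipate any serious obstacle.
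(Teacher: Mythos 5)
Your proposal is correct and is essentially the argument the paper relies on: the paper proves this lemma simply by citing \cite[Lemmas A.6, A.7]{FGRT}, whose proofs consist of exactly the Cauchy--Schwarz estimate for $\|\cA_{s,t}\|$ (with $C=\|G\|$, finite since $|\cZ_0|<\infty$) and the functional-calculus bounds for the nonnegative self-adjoint operator $\cM_{s,t}=\cA_{s,t}\cA_{s,t}^*$, including the identity $\|\cA_{s,t}^*(\cM_{s,t}+\beta I)^{-1/2}\xi\|^2=\langle \cM_{s,t}(\cM_{s,t}+\beta I)^{-1}\xi,\xi\rangle\leq\|\xi\|^2$ and the adjoint relation you use. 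No gaps.
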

Observe that for $\tau\leq t$
\begin{eqnarray*}
  \cD_\tau^j\cJ_{s,t}\xi=
  \left\{\begin{split}
    & \cJ_{\tau,t}^{(2)}(G\theta_j,\cJ_{s,\tau}\xi)\quad \text{  if } s\leq \tau,
    \\
    & \cJ_{s,t}^{(2)}(\cJ_{\tau,s}G\theta_j,\xi) \quad \text{  if } s>\tau.
  \end{split}
  \right.
\end{eqnarray*}
\begin{lemma}
\label{a-2}
  For any $\xi\in H,0\leq s\leq t\leq T$ and $p\geq 1$ we have the bounds
  \begin{eqnarray*}
    \mE\|\cD_\tau^j\cJ_{s,t}\xi\|^p &\leq & C\|\xi\|^p,
    \\    \mE\|\cD_\tau^j\cA_{s,t}\|^p_{\cL\left( L^2([s,t],\mR^m),H\right)} &\leq & C,
      \\    \mE\|\cD_\tau^j\cA_{s,t}^*\|^p_{\cL\left( H, L^2([s,t],\mR^m)\right)} &\leq & C,
  \end{eqnarray*}
  where $C=C_{T, p}$.
\end{lemma}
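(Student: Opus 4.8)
The plan is to reduce all three estimates to the \emph{pathwise} bounds already contained in Lemma \ref{p-29}: the contraction properties (\ref{p-38}), the energy bound (\ref{pp-5}), and the bilinear estimate proved along the way to (\ref{p-13}), namely $\|\cJ_{s,t}^{(2)}(\xi,\xi')\|\le C_T^{1/2}\|\xi\|\,\|\xi'\|$ with a \emph{deterministic} constant $C_T$ (every step in that proof, including the use of (\ref{pp-5}), is pathwise and uniform for $t\le T$). Combined with the chain-rule formula for $\cD_\tau^j\cJ_{s,t}\xi$ displayed just above the statement and the bound $\|G\theta_j\|\le\|G\|$, these inputs give all three inequalities without invoking any further moment estimate on $U$. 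This is exactly the scheme of \cite[Lemmas A.6, A.7]{FGRT}.

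For the first bound I substitute the two-case formula for $\cD_\tau^j\cJ_{s,t}\xi$. When $s\le\tau$ one has $\cD_\tau^j\cJ_{s,t}\xi=\cJ_{\tau,t}^{(2)}(G\theta_j,\cJ_{s,\tau}\xi)$; because the bilinear bound behind (\ref{p-13}) holds pathwise, I may insert the $\cF_\tau$-measurable random direction $\cJ_{s,\tau}\xi$ and use $\|\cJ_{s,\tau}\xi\|\le\|\xi\|$ from (\ref{p-38}) to obtain the pointwise estimate $\|\cD_\tau^j\cJ_{s,t}\xi\|\le C_T^{1/2}\|G\|\,\|\xi\|$. When $s>\tau$ one has $\cD_\tau^j\cJ_{s,t}\xi=\cJ_{s,t}^{(2)}(\cJ_{\tau,s}G\theta_j,\xi)$, and $\|\cJ_{\tau,s}G\theta_j\|\le\|G\theta_j\|\le\|G\|$, again by (\ref{p-38}), yields the same pointwise bound. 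Raising to the $p$-th power and taking expectations finishes the first inequality; in particular this gives the pathwise operator-norm bound $\|\cD_\tau^j\cJ_{r,t}\|_{\cL(H,H)}\le C_T^{1/2}\|G\|$.

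For the second bound I differentiate $\cA_{s,t}v=\int_s^t\cJ_{r,t}Gv(r)\dif r$ under the integral sign, so that $\cD_\tau^j\cA_{s,t}v=\int_s^t\cD_\tau^j\big(\cJ_{r,t}Gv(r)\big)\dif r$. Applying the pointwise bound from the first part with $\xi=Gv(r)$ and then Cauchy--Schwarz in $r$ gives $\|\cD_\tau^j\cA_{s,t}v\|\le C_T^{1/2}\|G\|^2(t-s)^{1/2}\|v\|_{L^2([s,t],\mR^m)}$, a pathwise operator-norm bound uniform for $s,t\le T$, whence the moment estimate. For the adjoint I use $(\cA_{s,t}^*\xi)(r)=G^*\cK_{r,t}\xi$ together with the identity $\cD_\tau^j(\cK_{r,t}\xi)=(\cD_\tau^j\cJ_{r,t})^*\xi$, which gives $\|\cD_\tau^j\cK_{r,t}\xi\|\le\|\cD_\tau^j\cJ_{r,t}\|_{\cL(H,H)}\|\xi\|\le C_T^{1/2}\|G\|\,\|\xi\|$ pathwise; integrating $\|G^*\cD_\tau^j\cK_{r,t}\xi\|_{\mR^m}^2$ over $r\in[s,t]$ then yields $\|\cD_\tau^j\cA_{s,t}^*\|_{\cL(H,L^2)}\le C_T^{1/2}$ pathwise, and the last inequality follows.

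The only genuinely delicate points are the two chain-rule manipulations: that one may pass $\cD_\tau^j$ through the time integral defining $\cA_{s,t}$, and that the Malliavin derivative of the adjoint $\cK_{r,t}=\cJ_{r,t}^{*}$ is the adjoint of $\cD_\tau^j\cJ_{r,t}$. Both are standard once the a priori integrability of $\cJ$, $\cJ^{(2)}$ provided by Lemma \ref{p-29} is in hand. The conceptual point worth emphasizing is that every constant here is pathwise and depends only on $T$, $p$, $\|G\|$ and the factor $e^{2T}$ from (\ref{pp-5}), so no estimate on $U$ beyond the contraction properties of $\cJ$ and $\cK$ is required.
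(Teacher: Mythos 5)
Your proposal is correct and follows essentially the same route as the paper, which does not write out a proof but instead invokes the scheme of \cite[Lemmas A.6, A.7]{FGRT} together with Lemma \ref{p-29}: represent $\cD_\tau^j\cJ_{s,t}\xi$ through the two-case $\cJ^{(2)}$ formula, and control $\cA_{s,t}$, $\cA_{s,t}^*$ via $\cJ_{r,t}G$ and $G^*\cK_{r,t}$. Your observation that in this model the second-derivative bound behind (\ref{p-13}) is pathwise with a deterministic constant (so no moment estimates on $U$ are needed, unlike in the Boussinesq setting of \cite{FGRT}) is exactly the simplification the paper's citation relies on.
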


\section{Spectral properties of Malliavin matrix  $\cM$}
For any $\alpha>0, N\in \mN,$ we define
\begin{eqnarray*}
  \cS_{\alpha,N}:=\{\phi\in H:\|P_N\phi\|^2\geq \alpha \|\phi\|^2\}.
\end{eqnarray*}

The aim of this section is to prove the following result:

\begin{thm}\label{p-9}
For any $N\geq 1,\alpha\in (0,1]$ and $T>0$, there exists a positive constant $\eps^*=\eps^*(\alpha,N,T)>0,$ such that, for any $n\geq 0,$ and $\eps\in (0,\eps^*]$, there exists a measurable set $\Omega_\eps=\Omega_\eps(\alpha,N,T)\subseteq \Omega$ satisfying
  \begin{eqnarray}\label{p-54}
    \mP(\Omega_\eps^c )\leq r(\eps ),
  \end{eqnarray}
    where $r=r(\alpha,N,T):(0,\eps^*]\rightarrow (0,\infty)$ is a non-negative, decreasing function with $\lim_{\eps \rightarrow 0}r(\eps)=0,$ and on the set $\Omega_\eps,$
  \begin{eqnarray}\label{p-55}
\inf_{\phi\in \cS_{\alpha,N}}\frac{\langle \cM_{0,T}\phi,\phi\rangle}{\|\phi\|^2}\geq \eps.
  \end{eqnarray}
\end{thm}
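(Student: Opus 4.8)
The plan is to follow the Hörmander-type strategy of Hairer--Mattingly \cite{martin} and F\"oldes et al. \cite{FGRT}, reformulating the lower bound (\ref{p-55}) as a quantitative \emph{propagation of smallness} from the forced modes $\cZ_0$ to all low modes. First I would rewrite the quadratic form through the backward flow: since $\cM_{0,T}=\cA_{0,T}\cA_{0,T}^*$ and $(\cA_{0,T}^*\xi)(r)=G^*\cK_{r,T}\xi$, one has
\begin{eqnarray*}
  \langle\cM_{0,T}\phi,\phi\rangle=\|\cA_{0,T}^*\phi\|_{L^2([0,T],\mR^{|\cZ_0|})}^2=\int_0^T\sum_{k\in\cZ_0}\beta_k^2\,\langle\cK_{r,T}\phi,e_k\rangle^2\,\dif r .
\end{eqnarray*}
Thus $\langle\cM_{0,T}\phi,\phi\rangle<\eps$ (with $\|\phi\|=1$) forces the scalar functions $g_k(r):=\langle\cK_{r,T}\phi,e_k\rangle$ to be small in $L^2([0,T])$ for every $k\in\cZ_0$. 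I would then define $\Omega_\eps$ as the event on which the moment and regularity bounds of Section~2 (in particular Lemmas \ref{4-1}, \ref{p-29}, \ref{p-10} and \ref{a-2}) hold with constants not exceeding a threshold that grows slowly as $\eps\to0$; by Chebyshev's inequality the complement has probability bounded by a decreasing function $r(\eps)\to0$, which supplies (\ref{p-54}).

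The core is an induction on $n$ showing that, on $\Omega_\eps$, the quantity $\sup_{r\in[T/2,T]}|g_k(r)|$ is bounded by a positive power of $\eps$ (times the energy constants) for every $k\in\cZ_n$. For the base case $k\in\cZ_0$ I would upgrade the $L^2$ smallness of $g_k$ to a sup bound on $[T/2,T]$ by interpolating against the uniform H\"older bound on $r\mapsto\cK_{r,T}\phi$ in $H^{-2}$ coming from Lemma \ref{p-10} (here the $C^\alpha$ semi-norms and the embedding $e_k\in H^2$ are used). For the inductive step I would exploit that the cubic nonlinearity couples frequencies exactly as in the recursion (\ref{pp-10}): differentiating the defining integral of $\cM_{0,T}$ in the Malliavin directions $\theta_\ell,\theta_m$ with $\ell,m\in\cZ_0$ and using the identity $\cD^j_s\cJ_{s,t}\xi=\cJ^{(2)}$ (together with the bounds of Lemmas \ref{p-29} and \ref{a-2}) produces, modulo controllable errors, the inner products $\langle\cK_{r,T}\phi,e_{k+\ell+m}\rangle$; Hypothesis \ref{pp-2}(i) (symmetry of $\cZ_0$) guarantees that all sign combinations arising in the triple products $e_ke_\ell e_m$ are captured, so smallness is transferred from $e_k$ to $e_{k+\ell+m}$.

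Finally, Hypothesis \ref{pp-2}(ii) gives $\cup_{n\geq0}\cZ_n=\mZ_*$, so there is a finite $n_0=n_0(N)$ with $\{k:0<|k|\leq N\}\subseteq\cup_{n=0}^{n_0}\cZ_n$; after $n_0$ iterations $\sup_{r\in[T/2,T]}|g_k(r)|$ is $O(\eps^{c})$ for all $|k|\leq N$ and some $c>0$. Evaluating at $r=T$ and using $\cK_{T,T}\phi=\phi$ yields $\|P_N\phi\|^2=O(\eps^{2c})\|\phi\|^2$, which is incompatible with $\|P_N\phi\|^2\geq\alpha\|\phi\|^2$ once $\eps\leq\eps^*(\alpha,N,T)$ is small enough. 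This contradiction shows that no $\phi\in\cS_{\alpha,N}$ can make the Rayleigh quotient smaller than $\eps$ on $\Omega_\eps$, which is exactly (\ref{p-55}).

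The step I expect to be the main obstacle is making the inductive propagation quantitative. The genuine difficulty is that a naive time-differentiation of $g_k$ brings in the full multiplication operator $U_r^2$, whose Fourier content is spread over all modes rather than confined to $\cZ_0$; isolating the contribution of the forced modes $\ell,m\in\cZ_0$ is precisely what forces the use of the Malliavin-derivative (iterated bracket) mechanism, and it requires splitting the remainder into a high-frequency piece, absorbed by the tail decay of $Q_N\cJ_{0,T}$ established earlier in this section, and a low-frequency piece absorbed by the induction hypothesis. Equally delicate is the bookkeeping of powers of $\eps$: each interpolation and each bracket loses a fixed power, so one must check that after the finitely many ($n_0(N)$) steps the exponent $c$ stays strictly positive, and that every constant depends on $(\alpha,N,T)$ only through the high-probability set $\Omega_\eps$.
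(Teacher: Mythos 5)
Your overall skeleton matches the paper's: represent $\langle \cM_{0,T}\phi,\phi\rangle=\sum_{k\in\cZ_0}\beta_k^2\int_0^T\langle \cK_{r,T}\phi,e_k\rangle^2\,\dif r$, upgrade $L^2$-smallness to sup-smallness of $\langle\cK_{t,T}\phi,e_k\rangle$ on $[T/2,T]$ for $k\in\cZ_0$ (this is Lemma \ref{h-10}, proved exactly by the interpolation-against-H\"older-bounds device you describe), propagate smallness inductively along the sets $\cZ_n$, and conclude by contradiction with $\langle\cQ_N\phi,\phi\rangle\geq\frac{\alpha}{2}\|\phi\|^2$ for $\phi\in\cS_{\alpha,N}$ (Proposition \ref{p-53}) once $C_2\eps^{q_2}<\frac{\alpha}{2}$. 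However, your inductive step contains a genuine gap. You propose to pass from $\cZ_n$ to $\cZ_{n+1}$ by ``differentiating the defining integral of $\cM_{0,T}$ in the Malliavin directions $\theta_\ell,\theta_m$'' and asserting that this produces $\langle\cK_{r,T}\phi,e_{k+\ell+m}\rangle$ ``modulo controllable errors.'' But smallness of a random quantity does \emph{not} transfer to its Malliavin derivative: a functional such as $\eps\bigl(1+\sin(W_1(T)\eps^{-10})\bigr)$ is uniformly of size $\eps$ while its Malliavin derivative is of size $\eps^{-9}$. The moment bounds of Lemmas \ref{p-29} and \ref{a-2} only control the \emph{size} of $\cJ^{(2)}$ and $\cD^j\cA_{s,t}$; they supply no implication of the form ``original small $\Rightarrow$ derivative small,'' which is the entire content of the step.

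What the paper actually does (following \cite{FGRT,martin}) is a two-stage quantitative argument that your proposal is missing. First (Lemma \ref{p-16}), time-differentiation of $g_\phi(t)=\langle\cK_{t,T}\phi,\cos(kz+\frac{\pi}{2}m)\rangle$ gives $g_\phi'(t)=\langle\cK_{t,T}\phi,I_k^m(U_t)\rangle$, and smallness of $g_\phi$ forces smallness of $g_\phi'$ via the quantitative interpolation lemma \cite[Lemma 6.2]{FGRT}, using moment bounds on $\|g_\phi'\|_{C^\alpha[T/2,T]}$ — this much is parallel to your base case. Second, and crucially (Lemma \ref{p-17}), one writes $U_t=L_t+\sum_{\ell\in\cZ_0}\beta_\ell e_\ell W_\ell(t)$ with $L_t=U_0+\int_0^t F(U_s)\dif s$ time-regular, so that $\langle\cK_{t,T}\phi,I_k^m(U_t)\rangle=A_0(t)+\sum_\ell A_\ell(t) W_\ell(t)+\sum_{\ell,j}A_{\ell,j}(t)W_\ell(t)W_j(t)$ is a degree-two \emph{Wiener polynomial}; the quantitative result \cite[Theorem 6.4]{FGRT} then says that smallness of such a polynomial on $[T/2,T]$ forces (on a high-probability set) smallness of every coefficient, in particular of $A_{\ell,j}(t)=3\langle\cK_{t,T}\phi,\beta_\ell\beta_j e_\ell e_j\cos(kz+\frac{\pi}{2}m)\rangle$, after which the trigonometric identities (\ref{pp-7})--(\ref{pp-8}) yield $e_{k+\ell+j}$. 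This Wiener-polynomial lemma (or a Norris-type substitute) is the missing ingredient that makes ``isolating the forced modes'' rigorous; no amount of moment estimates on Malliavin derivatives replaces it. A secondary confusion: the high-frequency tail decay of $Q_N\cJ_{0,T}$ in (\ref{6-2}), which you invoke to absorb remainders in the induction, plays no role in the spectral analysis of $\cM_{0,T}$; in the paper it is used only later, in the control construction of Proposition \ref{p-28}.
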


In order to prove this theorem, we  show   the   details of  Lie bracket computations in subsection \ref{pp-13},
demonstrate   Proposition \ref{p-53} in subsection \ref{p-7}
and  Proposition \ref{h-5} in subsection \ref{p-8}. Finally, give a proof the  of Theorem  \ref{p-9}
 in subsection \ref{p-59}.

\subsection{Details of  Lie bracket computations}
\label{pp-13}

For any Fr\'echet differentiable $E_1,E_2:H\rightarrow H,$
\begin{eqnarray*}
  [E_1,E_2](u):=\nabla E_2(u)E_1(u)-\nabla E_1(u) E_2(u).
\end{eqnarray*}
This operator $[E_1,E_2]$ is referred   as the  Lie bracket of two ``vector fileds" $E_1,E_2.$
For any $k,\ell,j\in \mZ_*, m,m',m''\in \{0,1\}$,  by calculating, for any $u=u(z)\in H$
\begin{eqnarray}
\nonumber  I_k^m(u)&:=& [ F(u), \cos(kz+\frac{\pi}{2}m)]
 \\ \nonumber &=&A\cos(kz+\frac{\pi}{2}m)+3u^2 \cos(kz+\frac{\pi}{2}m)- \cos(kz+\frac{\pi}{2}m),
  \\ \nonumber \cJ_{k,\ell}^{m,m'}(u)&:=&  -[[ F(u), \cos(kz+\frac{\pi}{2}m) ],\cos(\ell z+\frac{\pi}{2}m')]
  \\ \nonumber &=& 6u  \cos(kz+\frac{\pi}{2}m)\cos(\ell z+\frac{\pi}{2}m').
\\ \nonumber  K_{k,\ell,j}^{m,m',m''}(u)&:=&-[\cJ_{k,\ell}^{m,m'},\cos(jz+\frac{\pi}{2}m'') ]
  \\  \label{pp-8} &=& 6\cos(kz+\frac{\pi}{2}m)\cos(\ell z+\frac{\pi}{2}m')\cos(jz+\frac{\pi}{2}m'').
\end{eqnarray}
Therefore, for any $k,\ell,j \in \mZ,$ we have
\begin{eqnarray}
\label{pp-7}
\begin{split}
& \cos((k+\ell+j)z)=\sum_{m,m',m''\in \{0,1\}}C_1^{m,m',m''} K_{k,\ell,j}^{m,m',m''}(u)
\\
 & \sin((k+\ell+j)z)=\sum_{m,m',m''\in \{0,1\}}C_2^{m,m',m''} K_{k,\ell,j}^{m,m',m''}(u)
\end{split}
\end{eqnarray}
where   $C_i^{m,m',m''}, i=1,2$  are some constants depending  on $k,\ell,j, m,m',m''.$

\subsection{Quadratic forms: lower bounds}\label{p-7}
Denote
\begin{eqnarray*}
  \langle \cQ_N\phi,\phi\rangle:= \sum_{0\leq |k|\leq N}|\langle \phi, e_k\rangle|^2
\end{eqnarray*}
One easily sees that  the following Proposition holds.
\begin{proposition}\label{p-53}
  Fix  any integer $N\in \mN,$ then for any $U\in H$ and $\alpha\in (0,1]$,
  \begin{eqnarray*}
  \langle \cQ_N \phi,\phi\rangle \geq \frac{\alpha}{2}\|\phi\|^2
  \end{eqnarray*}
   holds  for every $\phi \in\cS_{\alpha,N}.$
\end{proposition}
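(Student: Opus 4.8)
The plan is to recognize that the bilinear form $\cQ_N$ is nothing but the squared norm of the low-mode projection $P_N$, so that the claimed inequality is an immediate consequence of Parseval's identity together with the very definition of the cone $\cS_{\alpha,N}$. There is no real analytic content to extract here; the whole statement unwinds from the definitions.

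First I would recall that $\{e_k:k\in\mZ_*\}$ is an orthonormal basis of $H$ and that $H_N=\mathrm{span}\{e_k:0<|k|\le N\}$, so that by Parseval's identity
\begin{eqnarray*}
  \|P_N\phi\|^2=\sum_{0<|k|\le N}|\langle\phi,e_k\rangle|^2 .
\end{eqnarray*}
Since $0\notin\mZ_*$, the index $k=0$ in the sum defining $\cQ_N$ is vacuous, and hence $\langle\cQ_N\phi,\phi\rangle=\|P_N\phi\|^2$ for every $\phi\in H$. In other words, $\cQ_N$ agrees, in quadratic-form sense, with the orthogonal projection $P_N$.

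Next I would invoke the definition of $\cS_{\alpha,N}$: a vector $\phi$ lies in $\cS_{\alpha,N}$ precisely when $\|P_N\phi\|^2\ge\alpha\|\phi\|^2$. Combining this with the previous identity gives, for every $\phi\in\cS_{\alpha,N}$,
\begin{eqnarray*}
  \langle\cQ_N\phi,\phi\rangle=\|P_N\phi\|^2\ge\alpha\|\phi\|^2\ge\frac{\alpha}{2}\|\phi\|^2 ,
\end{eqnarray*}
which is exactly the assertion.

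I expect no genuine obstacle in this argument: the bound is in fact sharper than stated, since one may keep the factor $\alpha$ rather than $\alpha/2$. The weaker constant $\alpha/2$ is presumably recorded only because that is the slack form in which the estimate is later consumed, for instance when it is paired with the quantitative spectral estimate of Theorem \ref{p-9}.
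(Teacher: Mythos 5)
Your proof is correct and matches the paper's intent exactly: the paper offers no argument beyond ``One easily sees that the following Proposition holds,'' and your unwinding of the definitions — Parseval's identity giving $\langle \cQ_N\phi,\phi\rangle = \|P_N\phi\|^2$ (the $k=0$ term being vacuous or zero since elements of $H$ have zero mean), followed by the defining inequality of $\cS_{\alpha,N}$ — is precisely that easy argument. Your observation that the bound actually holds with constant $\alpha$ rather than $\alpha/2$ is also accurate.
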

\subsection{Quadratic forms: upper bounds}\label{p-8}

The aim of this subsection is  to  prove  the following proposition.
\begin{proposition}\label{h-5}
Fix $T>0,$ for  any $ N\geq 1,\alpha\in (0,1]$,   there are positive constant $q_1=q_1(\alpha,N,T),q_2=q_2(\alpha,N,T)$ such that the following holds.
There exists a positive constant $\eps^*=\eps^*(\alpha,N,T)>0,$ such that, for any   $\eps\in (0,\eps^*]$, there exist a measurable set $\Omega_\eps=\Omega_\eps(\alpha,N,T)\subseteq \Omega$  and positive  constants  $C_1=C_1(\alpha,N,T),C_2=C_2(\alpha,N,T)$ such that
  \begin{eqnarray*}
    \mP((\Omega_\eps)^c )\leq C_1\eps ^{q_1}
  \end{eqnarray*}
  and on the set  $\Omega_\eps$ one has
  \begin{eqnarray*}
\langle \cM_{0,T}\phi,\phi\rangle \leq \eps \|\phi\|^2 ~\Rightarrow~ \langle \cQ_N \phi,\phi\rangle \leq   C_2 \eps^{q_2}\|\phi\|^2
  \end{eqnarray*}
  which is valid for any  $\phi \in\cS_{\alpha,N}.$
\end{proposition}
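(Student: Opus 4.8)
The plan is to run the standard iterated–bracket (H\"ormander-type) argument for the Malliavin matrix: propagate the smallness of the \emph{directly forced} directions to all low modes, then transfer the resulting time-integrated smallness to the endpoint $r=T$. Since the claim is $1$-homogeneous in $\phi$, I would assume $\|\phi\|=1$. Writing $\psi_r:=\cK_{r,T}\phi=\cJ_{r,T}^*\phi$ and using $\cM_{0,T}=\cA_{0,T}\cA_{0,T}^*$ together with $(\cA_{0,T}^*\xi)(r)=G^*\cK_{r,T}\xi$, the hypothesis reads
\begin{align*}
\langle\cM_{0,T}\phi,\phi\rangle=\|\cA_{0,T}^*\phi\|_{L^2}^2=\int_0^T\sum_{k\in\cZ_0}\beta_k^2\,\langle\psi_r,e_k\rangle^2\,\dif r\leq\eps .
\end{align*}
So the forced coordinates $r\mapsto\langle\psi_r,e_k\rangle$, $k\in\cZ_0$, are small in $L^2([0,T])$. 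The goal is to show this forces $|\langle\phi,e_j\rangle|=|\langle\psi_T,e_j\rangle|$ to be polynomially small in $\eps$ for every $|j|\le N$, since then $\langle\cQ_N\phi,\phi\rangle=\sum_{|j|\le N}\langle\phi,e_j\rangle^2\le C_2\eps^{q_2}$.

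The propagation mechanism is the following. For a Fr\'echet-differentiable field $V:H\to H$, apply It\^o's formula to $\langle\psi_r,V(U_r)\rangle$, using $\partial_r\psi_r=-(\nabla F(U_r))^*\psi_r$ (equation (\ref{p-3})), $\dif U_r=F(U_r)\dif r+\sum_{k\in\cZ_0}\beta_k e_k\dif W_k(r)$, and the fact that $\psi_r$ has bounded variation (it solves an ODE, so there is no It\^o cross term). This gives
\begin{align*}
\dif\langle\psi_r,V(U_r)\rangle &= \langle\psi_r,[F,V](U_r)\rangle\,\dif r+\frac12\sum_{k\in\cZ_0}\beta_k^2\,\langle\psi_r,\nabla^2V(U_r)(e_k,e_k)\rangle\,\dif r\\
&\quad + \sum_{k\in\cZ_0}\beta_k\,\langle\psi_r,[e_k,V](U_r)\rangle\,\dif W_k(r),
\end{align*}
where $[e_k,V]=\nabla V\cdot e_k$ since $e_k$ is a constant field. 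Thus the drift of this scalar semimartingale reproduces the $F$-bracket, while its diffusion coefficients reproduce the constant-field brackets computed in subsection \ref{pp-13}.

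The analytic engine is a \emph{Norris-type lemma}: if $Z_r=\langle\psi_r,V(U_r)\rangle$ is small in $L^2$ on an interval, while its drift and diffusion coefficients have high moments of their H\"older norms — precisely the bounds supplied by Section 2 (Lemmas \ref{4-1}, \ref{p-29}, \ref{a-2}, \ref{p-10}) — then, off an exceptional set of probability $\le C\eps^{q'}$, the drift coefficient and all diffusion coefficients $\langle\psi_r,[e_k,V](U_r)\rangle$ are again $L^2$-small on a slightly shorter interval, at the cost of a reduced power of $\eps$. Starting from $V=e_k$, $k\in\cZ_0$, the bracket chain $e_k\to I_k^m\to\cJ_{k,\ell}^{m,m'}\to K_{k,\ell,j}^{m,m',m''}$ of (\ref{pp-8}) (one bracket with $F$, then two with constant noise fields) together with the identities (\ref{pp-7}) shows that smallness at level $n-1$ propagates to $\langle\psi_r,\cos((k+\ell+j)z)\rangle$ and $\langle\psi_r,\sin((k+\ell+j)z)\rangle$ for $k\in\cZ_{n-1}$, $\ell,j\in\cZ_0$, i.e.\ to the modes $\pm(k+\ell+j)$, hence to every frequency in $\cZ_n$ (symmetry of $\cZ_0$ from Hypothesis \ref{pp-2}(i) keeps this consistent). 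By Hypothesis \ref{pp-2}(ii), $\cup_n\cZ_n=\mZ_*$, so after finitely many levels $n_N$ (enough to exhaust $\{|j|\le N\}$) I obtain $\|\langle\psi_\cdot,e_j\rangle\|_{L^2([T/2,T])}\le C\eps^{q_2}$ for all $|j|\le N$; taking $\Omega_\eps$ to be the intersection of the finitely many good sets gives $\mP(\Omega_\eps^c)\le C_1\eps^{q_1}$.

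It remains to transfer this $L^2$-in-time smallness to $r=T$: using the time-H\"older continuity of $r\mapsto\langle\psi_r,e_j\rangle$ (controlled via $\partial_r K_{r,T}$ as in Lemma \ref{p-10}) and interpolating between the $L^2$ bound and the H\"older seminorm yields $|\langle\phi,e_j\rangle|=|\langle\psi_T,e_j\rangle|\le C\eps^{q_2}$ after possibly shrinking $q_2$, and summing over $|j|\le N$ gives the conclusion. The hard part will be the Norris-type lemma and its bookkeeping: making ``$Z$ small $\Rightarrow$ its drift and diffusion coefficients small'' quantitative with an explicit polynomial loss of exponent at each bracket step, uniformly dominating the polynomially growing moments of the H\"older norms of $U_r$, $\psi_r$ and of the increasingly nonlinear bracket fields through Section 2, and verifying that the exponents $q_2$ stay positive while the exceptional probabilities remain $\le C_1\eps^{q_1}$ across all $n_N$ iterations.
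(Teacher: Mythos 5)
Your overall architecture --- propagate smallness from the forced directions through the iterated bracket chain $e_k \to I_k^m \to \cJ_{k,\ell}^{m,m'} \to K_{k,\ell,j}^{m,m',m''}$, use the identities (\ref{pp-7}) and Hypothesis \ref{pp-2} to exhaust the sets $\cZ_n$, intersect finitely many good sets with polynomially small exceptional probability, and evaluate at $t=T$ --- matches the paper's proof (Lemmas \ref{h-10}, \ref{p-16}, \ref{p-17}, \ref{h-11}). However, your analytic engine has a genuine gap: the process $\psi_r=\cK_{r,T}\phi$ is \emph{not} $\cF_r$-adapted (it depends on the noise on $[r,T]$ through $U_s$, $s\in[r,T]$), so the ``martingale part'' $\sum_{k}\beta_k\int\langle\psi_r,[e_k,V](U_r)\rangle\,\dif W_k(r)$ in your It\^o formula is an anticipating integral rather than an It\^o integral, and the classical Norris lemma --- whose proof rests on martingale exponential/Doob inequalities applied to exactly that term --- does not apply. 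This is not deferred ``bookkeeping''; it is the central obstruction of the infinite-dimensional setting. (In finite dimensions one sidesteps it by passing to the reduced Malliavin matrix via $\cJ_{0,r}^{-1}$, whose integrands are adapted, but here the Jacobian is not invertible.) Circumventing precisely this issue is why the Wiener-polynomial machinery of Hairer--Mattingly and F\"oldes et al.\ exists, and it is what the paper uses.

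Concretely, the paper never writes a stochastic integral. For the $F$-bracket it uses the pathwise identity $\frac{\dif}{\dif t}\langle\cK_{t,T}\phi,f\rangle=\langle\cK_{t,T}\phi,[F(U_t),f]\rangle$ for constant $f$ (valid realization by realization, no adaptedness required), combined with the interpolation result \cite[Lemma 6.2]{FGRT}: a function small in sup norm whose derivative has controlled H\"older norm has small derivative, the exceptional probability being bounded by the moment estimates of Section 2 (Lemmas \ref{8-1}, \ref{p-29}, \ref{p-10}). For the noise brackets (Lemma \ref{p-17}) it exploits additivity of the forcing: writing $U_t=L_t+\sum_{\ell\in\cZ_0}\beta_\ell e_\ell W_\ell(t)$ with $L_t$ Lipschitz in time, the quantity $\langle\cK_{t,T}\phi,I_k^m(U_t)\rangle$ is a degree-two Wiener polynomial $A_0(t)+\sum_{\ell}A_\ell(t)W_\ell(t)+\sum_{\ell,j}A_{\ell,j}(t)W_\ell(t)W_j(t)$, and \cite[Theorem 6.4]{FGRT} (designed exactly to replace Norris for non-adapted coefficients) forces all coefficients to be small; the $A_{\ell,j}$ are precisely the double brackets $\langle\cK_{t,T}\phi,e_\ell e_j\cos(kz+\frac{\pi}{2}m)\rangle$, so both noise brackets are obtained in one step rather than your two Norris applications per level. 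To salvage your route you would need a non-adapted Norris-type lemma (e.g.\ the roughness-based one of Hairer--Mattingly \cite{Hairer}), itself a substantial theorem. A minor further point: the paper's lemmas give sup-norm smallness on $[T/2,T]$, so the endpoint evaluation via $\cK_{T,T}\phi=\phi$ is immediate, and your extra $L^2$-to-endpoint interpolation step is unnecessary.
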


\def\@captype{figure}
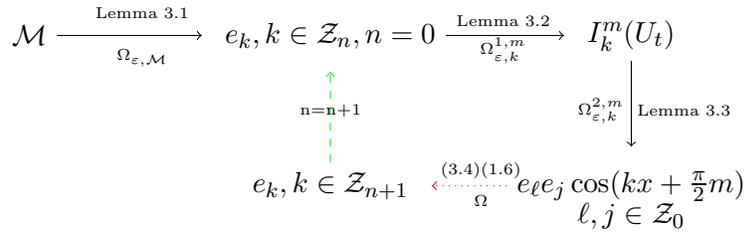
\begin{figure*}[h]
  \centering
\begin{tikzpicture}[shorten >=5pt,node distance=8cm]
\node(fs) at (0,4)   {$\cM$};
\node(ff) at (4,4)   {$e_k, k\in \cZ_n, n=0$};
\node(sf) at (8,4) {$I_k^m(U_t)$};
\node(st) at (8,2) {$e_\ell e_j\cos(kx+\frac{\pi}{2}m)$};
\node(sts) at (8,1.6) {$\ell,j\in\cZ_0$};
\node(ft) at (4,2) {$e_k, k\in \cZ_{n+1}$};
 \draw[->](fs) -- (ff);
  \draw[->](ff) -- (sf);

  \node at (6.3,3.8){\tiny  $\Omega_{\eps,k}^{1,m}$};
 \node at (6.3,4.2){\tiny  Lemma \ref{p-16} };
  \node at (7.6,3){\tiny  $\Omega_{\eps,k}^{2,m}$};
 \node at (8.7,3){\tiny  Lemma \ref{p-17} };
   \draw[->](sf) -- (st);
      \draw[dotted, red, ->](st) -- (ft);
        \node at (6,1.8){\tiny  $\Omega$};
 \node at (6,2.2){\tiny  (\ref{pp-7})(\ref{pp-10}) };
       \draw[dashed,green, ->](ft) -- (ff);
        \node at (4,3 ){\tiny  n=n+1 };

        \node at (1.5,4.3){\tiny  Lemma \ref{h-10} };
        \node at (1.5,3.7){\tiny  $\Omega_{\eps,\cM}$};
\end{tikzpicture}
\caption{\footnotesize  An illustration of the structure of the lemmas that leads to the proof of Proposition \ref{h-5}. In this figure, $m \in \{0,1\}, \ell\in \cZ_0.$   The  solid arrows  indicate that if one term is ``small" then the other one ``small" on a set of large measure(displayed below  or left of the arrow), where the meaning of ``smallness" is made precisely in each lemma. The  dashed  arrow  with color green  shows that the process is iterative.
 The  dotted arrow with  color  red   signify that  the new element is generated as    a linear combination of elements from the previous  actually.}
\label{t1}
\end{figure*}

In the  Figure \ref{t1}, we   give an illustration of the structure of  lemmas in this subsection   that lead to a  proof of Proposition \ref{h-5}.

\begin{lemma}\label{h-10}
For any $0<\eps<\eps_0(T,\cE_0)$,  there exist a set $\Omega_{\eps,\cM} $
and a constant $C=C_T $ with
\begin{eqnarray*}
  \mP(\Omega_{\eps,\cM} ^c)\leq C \eps
\end{eqnarray*}
such that on the set $\Omega_{\eps,\cM}  $
  \begin{eqnarray}\label{p-52}
  \begin{split}
 &\langle \cM_{0,T}\phi,\phi\rangle \leq \eps \|\phi\|^2
\\ & \Rightarrow~ \sup_{t\in [T/2,T]}| \langle \cK_{t,T}\phi,e_\ell \rangle| \leq   \eps^{1/8}\|\phi\|
\end{split}
  \end{eqnarray}
for each $\ell  \in \cZ_0 $ and $\phi\in H.$
\end{lemma}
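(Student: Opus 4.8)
The plan is to extract an $L^2$-in-time bound on the scalar function $g_\ell(r):=\langle \cK_{r,T}\phi,e_\ell\rangle$ directly from the hypothesis on $\cM_{0,T}$, and then to upgrade it to a supremum bound by interpolation against the time-regularity estimate of Lemma \ref{p-10}. Since $\cM_{0,T}=\cA_{0,T}\cA_{0,T}^*$ and $(\cA_{0,T}^*\phi)(r)=G^*\cK_{r,T}\phi$, one has
$$\langle \cM_{0,T}\phi,\phi\rangle=\int_0^T\|G^*\cK_{r,T}\phi\|_{\mR^{|\cZ_0|}}^2\,\dif r=\int_0^T\sum_{k\in\cZ_0}\beta_k^2\,\langle \cK_{r,T}\phi,e_k\rangle^2\,\dif r.$$
Hence on the event $\{\langle\cM_{0,T}\phi,\phi\rangle\le\eps\|\phi\|^2\}$ every summand is small, and for each fixed $\ell\in\cZ_0$ we get $\int_0^T g_\ell(r)^2\,\dif r\le \beta_\ell^{-2}\,\eps\,\|\phi\|^2$.

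Next I would control the time-regularity of $g_\ell$. As $e_\ell\in H^2$,
$$|\partial_r g_\ell(r)|=|\langle\partial_r\cK_{r,T}\phi,e_\ell\rangle|\le\|\partial_r\cK_{r,T}\phi\|_{H^{-2}}\,\|e_\ell\|_{H^2},$$
so the Lipschitz seminorm of $g_\ell$ on $[T/2,T]$ is dominated by $\|e_\ell\|_{H^2}\sup_{r\in[T/2,T]}\|\partial_r\cK_{r,T}\phi\|_{H^{-2}}$. Re-reading the proof of Lemma \ref{p-10} together with $\|\cK_{r,T}\phi\|\le\|\phi\|$ (Lemma \ref{p-29}) shows this bound is in fact linear in $\phi$, namely $|\partial_r g_\ell(r)|\le \tilde M\,\|\phi\|$ with the $\phi$-independent random constant
$$\tilde M:=C\,\|e_\ell\|_{H^2}\Big(1+\sup_{r\in[T/2,T]}\|U_r^2\|\Big),$$
which possesses finite moments of every order by Lemmas \ref{4-1} and \ref{8-1} (with bounds $C_T$ not depending on $U_0$, since these act on $[T/2,T]$, and $\|U_r^2\|\le C\|U_r\|_1^2$).

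The third step is the interpolation. For a Lipschitz function, localizing near a near-maximizer $r_0\in[T/2,T]$ of $|g_\ell|$ on an interval of length $\sim|g_\ell(r_0)|/(\tilde M\|\phi\|)$ on which $|g_\ell|\ge|g_\ell(r_0)|/2$ yields the elementary bound $\sup_{r\in[T/2,T]}|g_\ell(r)|^3\le C\,\tilde M\,\|\phi\|\int_0^T g_\ell(r)^2\,\dif r$. Combining this with the two estimates above gives, on the event $\{\langle\cM_{0,T}\phi,\phi\rangle\le\eps\|\phi\|^2\}$,
$$\sup_{r\in[T/2,T]}|g_\ell(r)|^3\le C\,\tilde M\,\eps\,\|\phi\|^3.$$

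Finally I would form the good set by truncating the random constant. Put $\Omega_{\eps,\cM}:=\{\tilde M\le\eps^{-1/2}\}$; Chebyshev's inequality and $\mE\tilde M^2\le C_T$ give $\mP(\Omega_{\eps,\cM}^c)\le\eps\,\mE\tilde M^2\le C_T\eps$. On $\Omega_{\eps,\cM}$ the last display reads $\sup_{r\in[T/2,T]}|g_\ell(r)|\le C\eps^{1/6}\|\phi\|$, and for $\eps$ below a threshold $\eps_0(T)$ absorbing $C$ this is at most $\eps^{1/8}\|\phi\|$ since $\tfrac16>\tfrac18$, which is exactly (\ref{p-52}). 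I expect the main obstacle to be the uniformity in $\phi$: the set $\Omega_{\eps,\cM}$ must be chosen once and serve all $\phi\in H$ simultaneously. This forces the use of the \emph{operator-norm} form of the regularity bound, i.e. the fact that $\tilde M$ can be taken independent of $\phi$; this in turn rests on the linearity of $\cK_{r,T}$ in $\phi$ and on the precise pathwise estimate behind Lemma \ref{p-10} rather than merely its stated $p$-th moment version.
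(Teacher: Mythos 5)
Your proof is correct, and it rests on the same two pillars as the paper's own argument: the identity $\langle\cM_{0,T}\phi,\phi\rangle=\sum_{\ell\in\cZ_0}\beta_\ell^2\int_0^T\langle e_\ell,\cK_{r,T}\phi\rangle^2\,\dif r$, and the time-regularity of $r\mapsto\cK_{r,T}\phi$ supplied by Lemmas \ref{p-29} and \ref{p-10}, turned into a single $\phi$-uniform event by Chebyshev truncation of a $\phi$-independent random constant. Where you differ is in how the interpolation is executed. The paper applies the abstract result \cite[Lemma 6.2]{FGRT} (with $\alpha=1$) to the integrated function $g_\phi(t)=\sum_{\ell\in\cZ_0}\beta_\ell^2\int_0^t\langle e_\ell,\cK_{r,T}\phi\rangle^2\,\dif r$, so that smallness of $g_\phi$ together with a moment bound on $g_\phi''$ (which involves $\partial_t\cK_{t,T}\phi$, hence Lemma \ref{p-10}) forces smallness of $g_\phi'$, and its good set is the uncountable intersection over $\|\phi\|=1$ of the events ``$|g_\phi|$ large or $|g_\phi'|$ small''. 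You interpolate one level down, directly on $g_\ell(r)=\langle\cK_{r,T}\phi,e_\ell\rangle$: $L^2$-in-time smallness against the pathwise Lipschitz bound, via an explicit localization near a maximizer. This buys self-containedness (no black-box lemma), a manifestly measurable good set $\{\tilde M\le\eps^{-1/2}\}$, and a slightly better exponent ($1/6$ relaxed to $1/8$); what the paper's route buys is a template that carries over unchanged to the iterative Lie-bracket steps (Lemmas \ref{p-16} and \ref{p-17}), where no explicit $L^2$ identity is available and only the derivative structure survives. Two small repairs are needed in your write-up: the localization inequality $\sup|g_\ell|^3\le C\tilde M\|\phi\|\int_0^T g_\ell^2\,\dif r$ requires the case distinction in which the interval of length $\sim|g_\ell(r_0)|/(\tilde M\|\phi\|)$ is not contained in $[T/2,T]$ --- but in that case one gets the even stronger bound $\sup|g_\ell|^2\le CT^{-1}\int_0^T g_\ell^2\,\dif r\le C\eps\|\phi\|^2$, so the conclusion is unaffected; and your $\tilde M$ depends on $\ell$, so it should be replaced by its maximum over the finitely many $\ell\in\cZ_0$ (finiteness is Hypothesis \ref{pp-2}) before truncating.
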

\begin{proof}
Note  that
\begin{eqnarray*}
\langle \cM_{0,T}\phi,\phi\rangle=\sum_{\ell \in \cZ_0}(\beta_\ell)^2\int_0^T \langle e_\ell,\cK_{r,T}\phi\rangle^2\dif r
\end{eqnarray*}
Define the function $  g_{\phi}(\cdot):[T/2,T]\rightarrow \mR^{+},$
\begin{eqnarray*}
  g_{\phi}(t):=\sum_{\ell \in \cZ_0}(\beta_\ell)^2\int_0^t \langle e_\ell,\cK_{r,T}\phi\rangle^2\dif r
\end{eqnarray*}
then
\begin{eqnarray*}
  g_{\phi}'(t)& =& \sum_{\ell \in  \cZ_0 }(\beta_\ell)^2\langle e_\ell,\cK_{t,T}\phi\rangle^2
\\   g_{\phi}^{''}(t)&=&2\sum_{\ell \in  \cZ_0 }
 \beta_\ell^2 \langle e_\ell,\cK_{t,T}\phi\rangle \langle e_\ell,\partial_t \cK_{t,T}\phi\rangle
\end{eqnarray*}
Let
\begin{eqnarray*}
\Omega_{\eps,\cM}&=& \bigcap_{\phi\in H,\|\phi\|=1} \Big\{\sup_{t\in [T/2,T]} |g_{\phi}(t)|\geq \eps \text{  or } \sup_{t\in [T/2,T]} |g_{\phi}'(t)|\leq \eps^{1/4} \Big\}.
\end{eqnarray*}

It is obvious that (\ref{p-52}) holds  on $\Omega_{\eps,\cM}$.
Setting  $\alpha=1$ in \cite[Lemma 6.2]{FGRT}, by  Lemmas \ref{p-29}, \ref{p-10},
one arrives at that
\begin{eqnarray*}
 \mP\left(\Omega_{\eps,\cM}^c\right)&\leq & \mP\left(\bigcup_{\phi\in H,\|\phi\|=1} \Big\{\sup_{t\in [T/2,T]} |g_{\phi}(t)|\leq \eps \text{  and } \sup_{t\in [T/2,T]} |g_{\phi}'(t)|\geq \eps^{1/4} \Big\}\right)
 \\ &\leq & C\eps \sum_{\ell \in \cZ_0 }(\beta_\ell)^4  \mE\[\sup_{t\in [T/2,T], \|\phi\|=1}|\langle e_\ell,\cK_{t,T}\phi\rangle \langle e_\ell,\partial_t \cK_{t,T}\phi\rangle |^2\]
 \\ &\leq &C\eps,
\end{eqnarray*}
which completes the proof of this lemma.
\end{proof}

\begin{lemma}\label{p-16}
  Fix  $k\in \mZ, m\in \{0,1\}.$ For any $0<\eps<\eps_0(T)$,  there  exist a set $\Omega_{\eps,k}^{1,m}$ and $C=C_{k,m,T}$ with
  \begin{eqnarray*}
   \mP((\Omega_{\eps,k}^{1,m}) ^c)\leq C\eps,
  \end{eqnarray*}
  such that on the set  $\Omega_{\eps,k}^{1,m},$   it holds that for any $\phi\in H$
  \begin{eqnarray}\label{pp-6}
    \sup_{t\in [T/2,T]} |\langle \cK_{t,T}\phi,\cos(kx+\frac{\pi}{2}m)\rangle|\leq \eps \|\phi\| \Rightarrow  \sup_{t\in [T/2,T]} |\langle \cK_{t,T}\phi,I_k^m(U_t))\rangle|\leq \eps^{1/10}\|\phi\|.
  \end{eqnarray}
\end{lemma}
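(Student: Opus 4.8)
The plan is to exploit the fact that $I_k^m(U_t)$ is, by construction, exactly the time derivative of the pairing of $\cK_{t,T}\phi$ against the test direction $\cos(kz+\frac{\pi}{2}m)$. Writing $v=\cos(kz+\frac{\pi}{2}m)$ and recalling that $\cK_{t,T}\phi$ solves the backward system (\ref{p-3}), namely $\partial_t\cK_{t,T}\phi = A\cK_{t,T}\phi+(\nabla N(U_t))^*\cK_{t,T}\phi$ with $\nabla N(U_t)=3U_t^2-1$ a self-adjoint multiplication operator, I would set
\[
  g_\phi(t):=\langle \cK_{t,T}\phi,v\rangle,\qquad t\in[T/2,T],
\]
and differentiate in $t$. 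Using self-adjointness of $A$ and of $3U_t^2-1$ gives
\[
  g_\phi'(t)=\langle \cK_{t,T}\phi,\,Av+3U_t^2v-v\rangle=\langle\cK_{t,T}\phi,\,I_k^m(U_t)\rangle,
\]
which is precisely the quantity on the right of (\ref{pp-6}). Thus the assertion reduces to showing that if $\sup_{[T/2,T]}|g_\phi|\le\eps\|\phi\|$ then $\sup_{[T/2,T]}|g_\phi'|\le\eps^{1/10}\|\phi\|$ on a set of large measure.

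This is an interpolation phenomenon of the same type already exploited in Lemma \ref{h-10}: a function that is uniformly small, but whose derivative has a controlled H\"older seminorm, must itself have a small derivative. Concretely, the deterministic interpolation inequality behind \cite[Lemma 6.2]{FGRT} yields, for a fixed $\alpha\in(0,1)$,
\[
  \sup_{[T/2,T]}|g_\phi'|\le C\Big(\sup_{[T/2,T]}|g_\phi|+\big(\sup_{[T/2,T]}|g_\phi|\big)^{\frac{\alpha}{1+\alpha}}\|g_\phi'\|_{C^\alpha[T/2,T]}^{\frac{1}{1+\alpha}}\Big).
\]
Setting $M:=\sup_{\phi\in H,\,\|\phi\|=1}\|g_\phi'\|_{C^\alpha[T/2,T]}$ and defining $\Omega_{\eps,k}^{1,m}:=\{M\le\eps^{-q}\}$ for a small $q>0$, the displayed bound turns $\sup|g_\phi|\le\eps\|\phi\|$ into $\sup|g_\phi'|\le C(\eps+\eps^{(\alpha-q)/(1+\alpha)})\|\phi\|$ by homogeneity in $\phi$; choosing $\alpha$ close to $1$ and $q$ small so that $\frac{\alpha-q}{1+\alpha}\ge\frac1{10}$ gives (\ref{pp-6}) on $\Omega_{\eps,k}^{1,m}$ for $\eps$ small.

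It then remains to bound $\mP((\Omega_{\eps,k}^{1,m})^c)=\mP(M>\eps^{-q})\le\eps^{pq}\,\mE M^p$ by $C\eps$, for which I must establish $\mE M^p<\infty$ for every $p\ge1$ and then take $p\ge 1/q$. This reduces to a uniform-in-$\phi$ time-H\"older estimate for $t\mapsto\langle\cK_{t,T}\phi,I_k^m(U_t)\rangle$ on $[T/2,T]$. Splitting a time increment as
\[
  \langle\cK_{t_1,T}\phi-\cK_{t_2,T}\phi,\,I_k^m(U_{t_1})\rangle+\langle\cK_{t_2,T}\phi,\,I_k^m(U_{t_1})-I_k^m(U_{t_2})\rangle,
\]
I would control the first term through the $H^{-2}$--$H^2$ duality, using Lemma \ref{p-10} to bound $\|\cK_{t_1,T}\phi-\cK_{t_2,T}\phi\|_{H^{-2}}\le|t_1-t_2|\sup_t\|\partial_t\cK_{t,T}\phi\|_{H^{-2}}$ against $\|I_k^m(U_{t_1})\|_{H^2}$, whose moments are finite by the positive-time smoothing of Lemmas \ref{8-1} and \ref{1-8} (here $t_1\ge T/2>0$); and the second term through $\|\cK_{t_2,T}\phi\|\le\|\phi\|$ (Lemma \ref{p-29}) together with the time-H\"older regularity of $t\mapsto U_t^2$, obtained from $U_{t_1}^2-U_{t_2}^2=(U_{t_1}-U_{t_2})(U_{t_1}+U_{t_2})$ and the moment bounds of Lemmas \ref{4-1}, \ref{2-1}, \ref{8-1}.

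The main obstacle is exactly this last regularity estimate: $\partial_t\cK_{t,T}\phi$ is only controlled in the weak norm $H^{-2}$, so the H\"older bound must be extracted by pairing it against the smooth function $I_k^m(U_t)$, which belongs to $H^2$ only because of the positive-time smoothing of $U$, while simultaneously tracking the nonlinear contribution $3U_t^2v$ through high Sobolev moments. Once these moment bounds are assembled, matching the exponents to produce the stated $\eps^{1/10}$ and $C\eps$ is routine bookkeeping.
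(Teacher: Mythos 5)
Your proposal follows essentially the same route as the paper: the same observation that $g_\phi'(t)=\langle \cK_{t,T}\phi, I_k^m(U_t)\rangle$ via the backward equation (\ref{p-3}) and the bracket structure, the same reduction to ``small function with controlled $C^\alpha$ norm of the derivative implies small derivative,'' and the same moment estimates (Lemmas \ref{p-10}, \ref{p-29}, \ref{8-1}, \ref{1-8}) to control $\sup_{\|\phi\|=1}\|g_\phi'\|_{C^\alpha[T/2,T]}$. The only packaging difference is that the paper invokes \cite[Lemma 6.2]{FGRT}, which is precisely your deterministic interpolation inequality combined with Chebyshev; you unpack it by hand and put the Chebyshev bound into the definition of the good event $\{M\le \eps^{-q}\}$. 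That part is fine.

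There is, however, one concrete error in your parameter choice: you cannot take ``$\alpha$ close to $1$.'' The map $t\mapsto U_t$ contains the additive noise term $\sum_{k\in\cZ_0}\beta_k e_k W_k(t)$, so its time regularity (in any spatial norm) is capped at H\"older exponent $1/2$; consequently the increments of $g_\phi'(t)=\langle \cK_{t,T}\phi, Af+3U_t^2f-f\rangle$ inherit the Brownian increments of $U_t^2$, and for $\alpha>1/2$ the quantity $M=\sup_{\|\phi\|=1}\|g_\phi'\|_{C^\alpha[T/2,T]}$ is almost surely infinite, so the bound $\mE M^p<\infty$ on which your Chebyshev step rests is false. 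Fortunately your own bookkeeping does not need $\alpha$ near $1$: the requirement is only $\frac{\alpha-q}{1+\alpha}>\frac1{10}$, which holds for any $\alpha\in(1/9,1/2)$ and $q$ small. Taking $\alpha=1/4$ (exactly the paper's choice, for which $\frac{\alpha}{2(1+\alpha)}=\frac1{10}$) repairs the argument with no other changes, since all the constituent moment bounds (Lemma \ref{p-10} for $\|\partial_t\cK_{t,T}\phi\|_{H^{-2}}$, Lemma \ref{8-1} for the Sobolev norms of $U$ on $[T/2,T]$, and the H\"older-in-time bound on $U_t^2 f$ with exponent $\alpha<1/2$) do hold at every order $p$.
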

\begin{proof}
  Define
   $ g_\phi(t):=\langle \cK_{t,T}\phi,\cos(kx+\frac{\pi}{2}m) \rangle, \forall t\in [0,T] $.  Observing   (\ref{p-3}), one has
   \begin{eqnarray*}
   g'_\phi(t)&=& \langle \cK_{t,T}\phi,[F(U_t),\cos(kz+\frac{\pi}{2}m)]\rangle
   \\ &=& \langle \cK_{t,T}\phi,I_k^m(U_t)\rangle.
   \end{eqnarray*}
Let $\alpha=\frac{1}{4}$  and define
\begin{eqnarray*}
\Omega_{\eps,k}^{1,m}&=& \bigcap_{\phi\in H,\|\phi\|=1} \Big\{\sup_{t\in [T/2,T]} |g_{\phi}(t)|\geq \eps \text{   ~or  } \sup_{t\in [T/2,T]} |g_{\phi}'(t)|\leq \eps^{\alpha/2(1+\alpha)} \Big\}.
\end{eqnarray*}
Then on  $\Omega_{\eps,k}^{1,m},$ (\ref{pp-6}) holds.
By \cite[Lemma 6.2]{FGRT}, it holds that
\begin{eqnarray}
 \nonumber \mP\left((\Omega_{\eps,k}^{1,m})^c\right)&\leq & \mP\left(\bigcup_{\phi\in H,\|\phi\|=1} \Big\{\sup_{t\in [T/2,T]} |g_{\phi}(t)|\leq \eps \text{  and } \sup_{t\in [T/2,T]} |g_{\phi}'(t)|\geq \eps^{\alpha/2(1+\alpha)} \Big\}\right)
 \\ \label{pp-15}&\leq & C\eps \mE\[\sup_{\phi, \|\phi\|=1}\|g_{\phi}'\|_{C^\alpha[T/2,T]}^{2/\alpha}\]
\end{eqnarray}

Note that
\begin{eqnarray*}
   g'_\phi(t)&=&  \langle \cK_{t,T}\phi,I_k^m(U_t)\rangle
  = \langle \cK_{t,T}\phi,  Af +3U_t^2f- f \rangle,
 \end{eqnarray*}
 where $f:\mT\rightarrow\mR$ is a function given by $f(z)=\cos(kz+\frac{\pi}{2}m ).$
We have
\begin{eqnarray*}
&& \|g_{\phi}'\|_{C^\alpha[T/2,T]}
  \\ & &\leq  C \sup_{t\in [T/2,T]}|\partial_t  \langle \cK_{t,T}\phi,Af \rangle|+ C \|\langle \cK_{t,T}\phi,U_t^2 f \rangle\|_{C^\alpha[T/2,T]}
 +  C \sup_{t\in [T/2,T]} |\partial_t  \langle \cK_{t,T}\phi, f \rangle|
\\ &&\leq  C \!\sup_{t\in [T/2,T]} \!|\partial_t  \langle \cK_{t,T}\phi,Af \rangle|+   C\!\sup_{t\in [T/2,T]} \| \partial_t \cK_{t,T}\phi\|_{H^{-2}} \!\sup_{t\in [T/2,T]}\| U_t^2f   \|_{2}
\\&&\quad +  C\sup_{t\in [T/2,T]}  \| \cK_{t,T}\phi\| \cdot \|  U_t^2f\|_{C^\alpha([T/2,T],H)}+  C \sup_{t\in [T/2,T]} |\partial_t  \langle \cK_{t,T}\phi, f\rangle|
\end{eqnarray*}
By (\ref{p-1}), Lemma \ref{8-1} and
\begin{eqnarray*}
  \frac{\|(U_{t_1}^2-U_{t_2}^2)f\|}{|t_1-t_2|^\alpha}\leq (\|U_{t_1}f\|+\|U_{t_2}f\|)\cdot \frac{\|U_{t_1}-U_{t_2}\|}{|t_1-t_2|^\alpha}\leq (\|U_{t_1}\|+\|U_{t_2}\|)\cdot \frac{\|U_{t_1}-U_{t_2}\|}{|t_1-t_2|^\alpha},
\end{eqnarray*}
we obtain
\begin{eqnarray*}
\mE \|  U_t^2f\|_{C^\alpha([T/2,T],H)}^p \leq C_{T,p}, \quad \forall p\geq 1.
\end{eqnarray*}
Therefore, by Lemmas \ref{8-1},\ref{p-29},\ref{p-10} and the above equality,  one arrives at that
\begin{eqnarray*}
\mE\[\sup_{\phi:\|\phi\|=1}\|g_{\phi}'\|_{C^\alpha[T/2,T]}^{2/\alpha}\]
&&\leq C_T.
\end{eqnarray*}
Combining the above inequality with (\ref{pp-15}), the proof   is completed.
\end{proof}

\begin{lemma}\label{p-17}
  Fix any $k\in \mZ, m\in \{0,1\}.$ For any $0<\eps<\eps_0(T)$,  there  exist a set $\Omega_{\eps,k}^{2,m}$ and $C=C_{T,k}$ with
  \begin{eqnarray}\label{pp-20}
   \mP((\Omega_{\eps,k}^{2,m}) ^c)\leq C\eps^{1/27},
  \end{eqnarray}
  such that on the set  $\Omega_{\eps,k}^{2,m},$ it holds that   for any $\phi\in H$
  \begin{eqnarray}
  \label{pp-18}
  \begin{split}
   &  \sup_{t\in [T/2,T]} |\langle \cK_{t,T}\phi,I_k^m(U_t)\rangle|\leq \eps \|\phi\|
    \\ &\Rightarrow  \sup_{\ell,j \in \cZ_0}\sup_{t\in [T/2,T]} |\beta_\ell \beta_j|\cdot |\langle \cK_{t,T}\phi,e_\ell e_j
   \cos(kx+\frac{\pi}{2}m)  \rangle| \leq \eps^{1/9}\|\phi\|.
   \end{split}
  \end{eqnarray}
\end{lemma}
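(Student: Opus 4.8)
The plan is to peel off the two forced directions $\ell$ and $j$ appearing in the target $e_\ell e_j\cos(kz+\frac{\pi}{2}m)$ by two successive It\^o (quadratic-variation) extractions. Write $f=\cos(kz+\frac{\pi}{2}m)$, so that $I_k^m(U_t)=Af+3U_t^2f-f$. Since $\cK_{t,T}\phi$ solves the pathwise backward equation (\ref{p-3}) and is of bounded variation in $t$, the only martingale contribution to $g_\phi(t):=\langle \cK_{t,T}\phi,I_k^m(U_t)\rangle$ comes from the It\^o differential of the quadratic term $3U_t^2f$. A direct computation using It\^o's formula for $U_t^2$ gives the decomposition
\begin{eqnarray*}
g_\phi(t)=g_\phi(T/2)+\int_{T/2}^t b_\phi(s)\,\dif s+M_\phi(t),\quad M_\phi(t)=6\sum_{\ell\in\cZ_0}\beta_\ell\int_{T/2}^t\langle \cK_{s,T}\phi,U_se_\ell f\rangle\,\dif W_\ell(s),
\end{eqnarray*}
whose quadratic variation has density $36\sum_{\ell\in\cZ_0}\beta_\ell^2\langle \cK_{s,T}\phi,U_se_\ell f\rangle^2$. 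The drift $b_\phi$ collects the Lie-bracket term $-\langle \cK_{s,T}\phi,\nabla F(U_s)I_k^m(U_s)\rangle$ together with the It\^o corrections, and all its moments are bounded uniformly in $\|\phi\|=1$ by Lemmas \ref{8-1} and \ref{p-29}.

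First I would extract the direction $\ell$. Using the hypothesis $\sup_{[T/2,T]}|g_\phi|\le\eps\|\phi\|$ together with a Norris-type estimate (the It\^o analogue of the interpolation lemma used in Lemma \ref{p-16}; concretely obtained by applying It\^o to $g_\phi^2$, so that $\langle M_\phi\rangle_T$ is controlled by the small increment $g_\phi(T)^2-g_\phi(T/2)^2$, by $\int 2g_\phi b_\phi\,\dif s$, and by a martingale whose own quadratic variation is $\le 4\eps^2\|\phi\|^2\langle M_\phi\rangle_T$), the smallness of $g_\phi$ forces, off a set of probability at most $C\eps^{q}$, that $\langle M_\phi\rangle_T\le \eps^{\theta}\|\phi\|^2$. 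As the summands are nonnegative this yields $\int_{T/2}^T\beta_\ell^2\langle \cK_{s,T}\phi,U_se_\ell f\rangle^2\,\dif s\le\eps^{\theta}\|\phi\|^2$ for each $\ell\in\cZ_0$. To upgrade this $L^2$-in-time bound to a supremum bound I would interpolate against the $C^\alpha$-regularity in time of $s\mapsto\beta_\ell\langle \cK_{s,T}\phi,U_se_\ell f\rangle$; exactly as in Lemma \ref{p-16}, all moments of this H\"older seminorm are bounded uniformly in $\|\phi\|=1$ using $\|\cK_{t,T}\phi\|\le\|\phi\|$ from (\ref{p-38}), the bound on $\partial_t\cK$ from Lemma \ref{p-10}, and the estimates on $\sup_t\|U_t\|_n$ from Lemma \ref{8-1}. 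The outcome of the first step is $\sup_{[T/2,T]}\beta_\ell|\langle \cK_{t,T}\phi,U_te_\ell f\rangle|\le\eps^{\theta'}\|\phi\|$.

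Next I would repeat the argument on $h_\phi(t):=\beta_\ell\langle \cK_{t,T}\phi,U_te_\ell f\rangle$, now known to be small. Because $U_te_\ell f$ is linear in $U_t$ there is no It\^o correction, and its $U$-gradient in the direction $e_j$ is exactly $e_je_\ell f$; hence the martingale part of $h_\phi$ is $\beta_\ell\sum_{j\in\cZ_0}\beta_j\int\langle \cK_{s,T}\phi,e_\ell e_jf\rangle\,\dif W_j$, with quadratic-variation density $\beta_\ell^2\sum_{j}\beta_j^2\langle \cK_{s,T}\phi,e_\ell e_jf\rangle^2$. The same Norris-plus-interpolation step then yields $\sup_{[T/2,T]}\beta_\ell\beta_j|\langle \cK_{t,T}\phi,e_\ell e_jf\rangle|\le\eps^{1/9}\|\phi\|$ for each $\ell,j\in\cZ_0$. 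Intersecting the finitely many exceptional sets over $\ell,j\in\cZ_0$ (finiteness is Hypothesis \ref{pp-2}({\romannumeral3})) defines $\Omega_{\eps,k}^{2,m}$, and tracking the degradation of the exponent through the two extractions gives both the probability estimate (\ref{pp-20}) and the implication (\ref{pp-18}).

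The main obstacle, and the reason this lemma is genuinely harder than Lemma \ref{p-16}, is that $g_\phi(t)=\langle \cK_{t,T}\phi,I_k^m(U_t)\rangle$ is a nondifferentiable It\^o process, so one cannot simply differentiate in $t$ and invoke the deterministic interpolation inequality \cite[Lemma 6.2]{FGRT}; passing from supremum smallness of the semimartingale to smallness of its diffusion coefficient requires a quantitative Norris-type lemma as in \cite{martin,FGRT}. The delicate points are (i) verifying, uniformly in $\|\phi\|=1$, the moment and time-H\"older bounds on the integrands $\langle \cK_{\cdot,T}\phi,U_\cdot e_\ell f\rangle$ and $\langle \cK_{\cdot,T}\phi,e_\ell e_jf\rangle$ that are needed to upgrade the $L^2$-in-time bounds to supremum bounds, and (ii) the bookkeeping of the exponent loss at each of the two extractions so that the final powers $\eps^{1/9}$ in (\ref{pp-18}) and $\eps^{1/27}$ in (\ref{pp-20}) come out exactly as stated.
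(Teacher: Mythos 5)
Your two-step bracket-extraction scheme mirrors the Lie-bracket algebra of subsection \ref{pp-13}, but the probabilistic engine you propose (an It\^o/Norris argument applied to $g_\phi(t)=\langle \cK_{t,T}\phi,I_k^m(U_t)\rangle$) breaks down at two points, and the paper's proof is specifically designed to avoid both. First, $g_\phi$ is not a semimartingale with respect to the forward filtration: $\cK_{t,T}\phi=\cJ_{t,T}^*\phi$ solves the backward problem (\ref{p-3}) and therefore depends on the noise on $[t,T]$, so the integrand in your ``martingale part'' $M_\phi(t)=6\sum_{\ell}\beta_\ell\int_{T/2}^t\langle \cK_{s,T}\phi,U_se_\ell f\rangle\,\dif W_\ell(s)$ anticipates the driving Brownian motions. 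The integral can still be defined pathwise (the $C^1$-in-time regularity of $\cK_{\cdot,T}\phi$ from Lemma \ref{p-10} makes it a Young integral), but it is \emph{not} a martingale, its bracket is not $36\sum_\ell\beta_\ell^2\int\langle\cdot\rangle^2\,\dif s$, and the exponential-martingale (Bernstein) inequality on which your Norris step rests is unavailable; switching to the backward filtration does not help, since then $U_s$ anticipates. Second, even granting a Norris estimate for each fixed $\phi$, the exceptional set it produces comes from a martingale inequality for the $\phi$-dependent process $\int g_\phi\,\dif M_\phi$ and hence depends on $\phi$. The lemma requires one set $\Omega^{2,m}_{\eps,k}$ on which the implication (\ref{pp-18}) holds for \emph{every} $\phi\in H$; the unit sphere of $H$ is not compact, so no union bound or chaining over a net can restore this uniformity. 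This is exactly the infinite-dimensional obstruction that led Hairer--Mattingly to replace Norris' lemma by the Wiener-polynomial method for additive noise.

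The paper's proof sidesteps both issues algebraically: writing $U_t=L_t+\sum_{\ell\in\cZ_0}\beta_\ell e_\ell W_\ell(t)$ with $L_t=U_0+\int_0^tF(U_s)\,\dif s$ and expanding $3U_t^2f$, one gets the exact identity $g_\phi(t)=A_0(t)+\sum_{\ell\in\cZ_0}A_\ell(t)W_\ell(t)+\sum_{\ell,j\in\cZ_0}A_{\ell,j}(t)W_\ell(t)W_j(t)$, a degree-two Wiener polynomial whose quadratic coefficients $A_{\ell,j}(t)=3\beta_\ell\beta_j\langle\cK_{t,T}\phi,e_\ell e_jf\rangle$ are precisely the targets, and whose coefficients are pathwise Lipschitz in $t$ with the moment bound (\ref{p-22}) supplied by Lemmas \ref{8-1}--\ref{p-10}. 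Then \cite[Theorem 6.4]{FGRT} (the Hairer--Mattingly dichotomy from \cite{martin}) provides a set $\Omega^{\#}_\eps$ with $\mP((\Omega^{\#}_\eps)^c)\le C\eps$ that depends \emph{only on the Wiener paths, not on the coefficients}: on it, smallness of the polynomial forces either $\cN_0(\phi)\le\eps^{1/9}$ or $\cN_1(\phi)\ge\eps^{-1/27}$, for all $\phi$ simultaneously and with no adaptedness assumption. Intersecting with $\bigcap_{\|\phi\|=1}\{\cN_1(\phi)<\eps^{-1/27}\}$, whose complement is controlled by Chebyshev and (\ref{p-22}), yields (\ref{pp-18}) and (\ref{pp-20}) in a single step --- both noise directions $\ell,j$ are extracted at once, and the exponents $1/9$, $1/27$ come straight from the dichotomy rather than from a two-stage exponent bookkeeping. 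If you insist on a Norris-type route, it can be made uniform in $\phi$ and tolerant of anticipating integrands, but only via the pathwise ``$\theta$-H\"older roughness'' Norris lemma of \cite{Hairer}, which is substantially heavier machinery than what your sketch invokes.
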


\begin{proof}
Denote $L_t=U_0+ \int_0^t F(U_s)\dif s,$ then we have
\begin{eqnarray*}
  I_k^m(U_t) &=& A\cos(kz+\frac{\pi}{2}m)-\cos(kz+\frac{\pi}{2}m)+3U_t^2 \cos(kz+\frac{\pi}{2}m)
  \\ &=&  A\cos(kz+\frac{\pi}{2}m)-\cos(kz+\frac{\pi}{2}m)+3\bigg(L_t+\sum_{k\in \cZ_0}\beta_ke_kW_k(t)\bigg)^2 \cos(kz+\frac{\pi}{2}m)
  \\ &=& A\cos(kz+\frac{\pi}{2}m) -\cos(kz+\frac{\pi}{2}m)
  \\ && +3\bigg(L_t^2+2L_t \sum_{\ell \in \cZ_0}\beta_\ell e_\ell W_\ell(t)+  \sum_{\ell,j \in \cZ_0 }\beta_\ell e_\ell \beta_j e_j W_\ell(t)W_j(t)  \bigg)  \cos(kz+\frac{\pi}{2}m).
\end{eqnarray*}
Therefore,
\begin{eqnarray*}
 &&  \langle \cK_{t,T}\phi,  I_k^m(U_t) \rangle
  \\ &&= \langle \cK_{t,T}\phi,  A\cos(kz+\frac{\pi}{2}m)- \cos(kz+\frac{\pi}{2}m) +3 L_t^2 \cos(kz+\frac{\pi}{2}m) \rangle
  \\ &&\quad +6 \sum_{\ell \in \cZ_0} \langle \cK_{t,T}\phi, L_t\beta_\ell e_\ell \cos(kz+\frac{\pi}{2}m)\rangle W_\ell (t)
  \\ &&\quad +3 \sum_{\ell,j \in \cZ_0  }\langle \cK_{t,T}\phi,\beta_\ell e_\ell \beta_j e_j
   \cos(kz+\frac{\pi}{2}m)  \rangle  W_\ell(t)W_j(t)
   \\ &&:=A_0(t)+\sum_{\ell \in \cZ_0} A_\ell W_\ell (t)+\sum_{\ell,j \in \cZ_0 }A_{\ell,j}W_\ell(t)W_j(t).
\end{eqnarray*}
By Lemmas \ref{8-1}-\ref{p-10},   for any $T,p>0,$ we have
\begin{eqnarray}
\nonumber   && \mE \[\sup_{s\neq t\in [T/2,T]}\Big|\frac{|A_0(t)-A_0(s)|}{|t-s|} +\sum_{\ell \in \cZ_0} \frac{|A_\ell(t)-A_\ell(s)|}{|t-s|}+\sum_{\ell,j \in \cZ_0 }\frac{|A_{\ell,j}(t)-A_{\ell,j}(s)|}{|t-s|}\Big|^p \]
 \\ \label{p-22} && \leq C_{T,p}.
\end{eqnarray}

Define
\begin{eqnarray*}
  \cN_1(\phi)&:=&\sup_{s\neq t\in [T/2,T]}\Big|\frac{|A_0(t)-A_0(s)|}{|t-s|} +\sum_{\ell \in \cZ_0} \frac{|A_\ell (t)-A_\ell (s)|}{|t-s|}+\sum_{\ell,j \in \cZ_0 }\frac{|A_{\ell,j}(t)-A_{\ell,j}(s)|}{|t-s|}\Big|,
  \\
    \cN_0(\phi)&:=&\sup_{s\neq t\in [T/2,T]}\Big| |A_0(t)| +\sum_{\ell \in \cZ_0} |A_\ell(t)|+\sum_{\ell,j \in \cZ_0 }|A_{\ell,j}(t)|\Big|.
\end{eqnarray*}
By \cite[Theorem 6.4]{FGRT}, there exists a set $\Omega^{\#}_\eps$ such that
\begin{eqnarray}\label{pp-21}
\mP( (\Omega^{\#}_\eps)^c)\leq C\eps,
\end{eqnarray}
and
on $\Omega^{\#}_\eps$, we have
\begin{eqnarray}\label{pp-17}
   \sup_{t\in [T/2,T]} |\langle \cK_{t,T}\phi,I_k^m(U)\rangle|\leq \eps \Rightarrow
   \left\{
   \begin{split}
     &  \text{either } \cN_0(\phi)\leq \eps^{1/9},
     \\
     &\text{or  } \cN_1(\phi)\geq \eps^{-1/27}.
   \end{split}
   \right.
\end{eqnarray}
Therefore, we obtain
\begin{eqnarray}\label{pp-19}
   \sup_{t\in [T/2,T]} |\langle \cK_{t,T}\phi,I_k^m(U)\rangle|\leq \eps \Rightarrow
  \cN_0(\phi)\leq \eps^{1/9}
\end{eqnarray}
on a  set
\begin{eqnarray*}
\Omega_{\eps,k}^{2,m}: =\Omega^{\#}_\eps \cap \cap_{\phi \in H, \|\phi\|=1 } \{\cN_1(\phi)<  \eps^{-1/27}  \}.
\end{eqnarray*}
Combining (\ref{pp-19})  with   the following  fact
 \begin{eqnarray*}
|\langle \cK_{t,T}\phi,\beta_\ell e_\ell \beta_j e_j
   \cos(kz+\frac{\pi}{2}m) \rangle|
  = |\beta_\ell \beta_j|\cdot |\langle \cK_{t,T}\phi,e_\ell e_j
   \cos(kz+\frac{\pi}{2}m)  \rangle|,
\end{eqnarray*}
one arrives at  (\ref{pp-18}). The desired result
(\ref{pp-20})  is implied  by
 (\ref{p-22}) and  (\ref{pp-21}).

\end{proof}

\begin{lemma}\label{h-11}
For any $n \in \mN$, and $ q_{n},C_{n}>0,$ there exist   $p_{n+1}, q_{n+1},C_{n+1}>0$, a constant   $C=C(n,T)$,  and   a set $\Omega_{\eps,n}$  with
  \begin{eqnarray*}
   \mP(\Omega_{\eps,n} ^c)\leq C \eps^{p_{n+1}},
  \end{eqnarray*}
  such that on the set  $\Omega_{\eps,n},$ it holds
    \begin{eqnarray*}
    && \sum_{k \in \cZ_{n},  }\sup_{t\in [T/2,T]} |\langle \cK_{t,T}\phi,e_k\rangle| \leq C_{n}\eps^{q_{n}} \|\phi\|
    \\ && \Rightarrow   \sum_{k  \in \cZ_{n+1} }\sup_{t\in [T/2,T]} |\langle \cK_{t,T}\phi,e_k\rangle|\leq C_{n+1}\eps^{q_{n+1}} \|\phi\|.
  \end{eqnarray*}
\end{lemma}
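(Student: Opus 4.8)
The plan is to read Lemma \ref{h-11} as a single turn of the iteration drawn in Figure \ref{t1}: from control of $\langle\cK_{t,T}\phi,e_k\rangle$ for the frequencies $k\in\cZ_n$ one passes, via Lemma \ref{p-16} and then Lemma \ref{p-17}, to control of the triple products $\langle\cK_{t,T}\phi,e_\ell e_j\cos(kz+\tfrac{\pi}{2}m)\rangle$, and finally, by the purely algebraic identities (\ref{pp-7})--(\ref{pp-10}), to control of $\langle\cK_{t,T}\phi,e_{k'}\rangle$ for the new frequencies $k'\in\cZ_{n+1}$. Two structural facts should be recorded first. Since the hypothesis bounds a finite sum, it yields the individual bound $\sup_{t\in[T/2,T]}|\langle\cK_{t,T}\phi,e_k\rangle|\leq C_n\eps^{q_n}\|\phi\|$ for every $k\in\cZ_n$. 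Moreover, induction on $n$ (using Hypothesis \ref{pp-2}(i) together with (\ref{pp-10})) shows that each $\cZ_n$ is \emph{finite} and \emph{symmetric}, i.e. $k\in\cZ_n\Rightarrow -k\in\cZ_n$. Finiteness keeps every sum and every set-intersection finite; symmetry guarantees that for each $k\in\cZ_n$ and each $m\in\{0,1\}$ the test function $\cos(kz+\tfrac{\pi}{2}m)$ equals $\pm e_{k'}$ for some $k'\in\cZ_n$, so that both parities are available as controlled quantities, $\sup_t|\langle\cK_{t,T}\phi,\cos(kz+\tfrac{\pi}{2}m)\rangle|\leq C_n\eps^{q_n}\|\phi\|$.

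I would then run the two analytic steps for each pair $(k,m)$ with $k\in\cZ_n$, $m\in\{0,1\}$. Applying Lemma \ref{p-16} at scale $\asymp\eps^{q_n}$ turns control of $\langle\cK_{t,T}\phi,\cos(kz+\tfrac{\pi}{2}m)\rangle$ into control of $\langle\cK_{t,T}\phi,I_k^m(U_t)\rangle$ at level $\asymp\eps^{q_n/10}$, on a set whose complement has probability $\leq C\eps^{q_n}$. Feeding this directly into the hypothesis of Lemma \ref{p-17} (applied at scale $\asymp\eps^{q_n/10}$) yields, on a set whose complement has probability $\leq C\eps^{q_n/270}$, the estimate $\sup_t|\beta_\ell\beta_j|\,|\langle\cK_{t,T}\phi,e_\ell e_j\cos(kz+\tfrac{\pi}{2}m)\rangle|\leq C\eps^{q_n/90}\|\phi\|$ for all $\ell,j\in\cZ_0$. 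I would define $\Omega_{\eps,n}$ as the intersection, over the finite index set $\{(k,m):k\in\cZ_n,\ m\in\{0,1\}\}$, of the two sets furnished by these lemmas; a union bound then gives $\mP(\Omega_{\eps,n}^c)\leq C\eps^{p_{n+1}}$ with, for instance, $p_{n+1}=q_n/270$.

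The last step is algebraic and holds on all of $\Omega$. Since $\cZ_0$ is finite and symmetric, $|\beta_\ell\beta_j|\geq(\min_{\cZ_0}|\beta|)^2>0$, so the previous estimate upgrades to $\sup_t|\langle\cK_{t,T}\phi,e_\ell e_j\cos(kz+\tfrac{\pi}{2}m)\rangle|\leq C\eps^{q_n/90}\|\phi\|$. Fixing $k'\in\cZ_{n+1}$ and writing $k'=k+\ell+j$ with $k\in\cZ_n$, $\ell,j\in\cZ_0$ through (\ref{pp-10}), the identities (\ref{pp-7}) express $\cos(k'z)$ and $\sin(k'z)$ as fixed finite linear combinations of the functions $K^{m,m',m''}_{k,\ell,j}(u)=6\cos(kz+\tfrac{\pi}{2}m)\cos(\ell z+\tfrac{\pi}{2}m')\cos(jz+\tfrac{\pi}{2}m'')$, and each of the latter two cosine factors equals $\pm e_{\ell'},\pm e_{j'}$ with $\ell',j'\in\cZ_0$. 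Consequently $\langle\cK_{t,T}\phi,e_{k'}\rangle$ is a fixed finite linear combination of the quantities controlled in the previous paragraph, whence $\sup_t|\langle\cK_{t,T}\phi,e_{k'}\rangle|\leq C\eps^{q_n/90}\|\phi\|$. Summing over the finite set $\cZ_{n+1}$ yields the conclusion with $q_{n+1}=q_n/90$ and a constant $C_{n+1}$ depending on $n,T$, on $\min_{\cZ_0}|\beta|$, and on $|\cZ_{n+1}|$.

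The routine content is the exponent/constant bookkeeping through the two scale substitutions and the final union bound. The part demanding the most care — and the genuine engine of the whole induction — is the algebraic passage (\ref{pp-7})--(\ref{pp-10}): one must verify that the recursion defining $\cZ_{n+1}$, combined with the symmetry of $\cZ_0$, makes every new frequency reachable through products in which the cosine factor $\cos(kz+\tfrac{\pi}{2}m)$ ranges over \emph{both} parities $m\in\{0,1\}$. This is precisely why establishing the symmetry of $\cZ_n$ at the outset is indispensable: it supplies both $\cos(k\,\cdot)$ and $\sin(k\,\cdot)$ as already-controlled test functions, so that Lemma \ref{p-16} may legitimately be invoked for each $m$. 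Once this is secured, no probabilistic input beyond Lemmas \ref{p-16} and \ref{p-17} is needed.
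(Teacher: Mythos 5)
Your proposal is correct and follows essentially the same route as the paper's own proof: establish the symmetry and finiteness of $\cZ_n$ so both parities $\cos(kz+\tfrac{\pi}{2}m)$ are controlled, chain Lemma \ref{p-16} into Lemma \ref{p-17} for each pair $(k,m)$, intersect the finitely many good sets, and close the loop algebraically via (\ref{pp-7})--(\ref{pp-10}) using the symmetry of $\cZ_0$. Your explicit exponent bookkeeping ($q_{n+1}=q_n/90$, $p_{n+1}=q_n/270$) and the lower bound $|\beta_\ell\beta_j|\geq(\min_{\cZ_0}|\beta|)^2$ merely make explicit what the paper absorbs into the unspecified constants $C_{n+1},q_{n+1},p_{n+1}$.
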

\begin{proof}
For any $n\geq 0,$ by Hypothesis  \ref{pp-2} and  the definition of  $\cZ_n$, one sees that
  \begin{eqnarray}
  \label{pz-1}
     \forall k \in\cZ_{n} \Rightarrow  -k\in \cZ_n.
  \end{eqnarray}
 Thus, on the set $\{\sum_{k \in \cZ_{n} }\sup_{t\in [T/2,T]} |\langle \cK_{t,T}\phi,e_k\rangle| \leq C_{n}\eps^{q_{n}} \|\phi\|\},$ it holds that
  \begin{eqnarray*}
 \sup_{t\in [T/2,T], k\in \cZ_n,  m\in \{0,1\}} |\langle \cK_{t,T}\phi,\cos(kz+\frac{\pi}{2}m) \rangle| \leq C_{n}\eps^{q_{n}} \|\phi\|.
  \end{eqnarray*}

   By Lemma \ref{p-16},
for  any $k\in \cZ_n, m\in \{0,1\},$  there  exist a set $\Omega_{\eps,k}^{1,m}$, $C=C_{k,m,T}$ and  $ p_n'>0$ with
  \begin{eqnarray}
  \label{m-1}
   \mP((\Omega_{\eps,k}^{1,m}) ^c)\leq C\eps^{p_n'},
  \end{eqnarray}
  such that on the set  $\Omega_{\eps,k}^{1,m},$   it holds that
  \begin{eqnarray}\label{pp-12}
   \nonumber  && \sup_{t\in [T/2,T]} |\langle \cK_{t,T}\phi,\cos(kz+\frac{\pi}{2}m)\rangle|\leq C_n\eps^{q_n} \|\phi\|
    \\ && \Rightarrow  \sup_{t\in [T/2,T]} |\langle \cK_{t,T}\phi,I_k^m(U_t))\rangle|\leq C_{n+1}'\eps^{q_{n+1}'}\|\phi\|
  \end{eqnarray}
for some $C_{n+1}', q_{n+1}'>0. $

    By Lemma \ref{p-17}, for any $k\in \cZ_n, m\in \{0,1\}, $ there exist $p_{n}^{''}, C_{n+1}, q_{n+1}$  and  a set   $\Omega_{\eps,k}^{2,m}$ such that  on $\Omega_{\eps,k}^{2,m}$,
  \begin{eqnarray}
   \nonumber    && \sup_{t\in [T/2,T]} |\langle \cK_{t,T}\phi,I_k^m(U) \rangle|\leq C_{n+1}' \eps^{q_{n+1}'} \|\phi\|
     \\ \label{p-11} && \Rightarrow  \sup_{t\in [T/2,T],\ell,j  \in\cZ_0} |\langle \cK_{t,T}\phi,e_\ell e_j
   \cos(kz+\frac{\pi}{2}m) \rangle|\leq C_{n+1}\eps^{q_{n+1}}\|\phi\|,
  \end{eqnarray}
  and
  \begin{eqnarray}
  \label{m-2}
    \mP(  (\Omega_{\eps,k}^{2,m})^c )\leq  C\eps^{p_{n}^{''}}.
  \end{eqnarray}

Let
  \begin{eqnarray*}
\Omega_{\eps,n}=\cap_{k  \in\cZ_{n}, m\in\{0,1\} }  \Omega_{\eps,k}^{1,m}\cap \Omega_{\eps,k}^{2,m} .
  \end{eqnarray*}
By (\ref{pp-12}) and (\ref{p-11}),  on the  set
$\Omega_{\eps,n}$,  it holds that
 \begin{eqnarray*}
 \begin{split}
     & \sum_{k \in \cZ_{n},  }\sup_{t\in [T/2,T]} |\langle \cK_{t,T}\phi,e_k\rangle| \leq C_{n}\eps^{q_{n}} \|\phi\|
    \\    \!\Rightarrow\!\!\!\!
   & \sup_{t\in [T/2,T]} \sup_{\mbox{\tiny$\begin{array}{c}
k\in \cZ_n,\ell,j \in \cZ_0  \\
m\in \{0,1\}
\end{array}$}}
|\langle \cK_{t,T}\phi,e_{\ell} e_j
   \cos(kz+\frac{\pi}{2}m) \rangle|\leq C_{n+1}\eps^{q_{n+1}}\|\phi\|
   \end{split}
 \end{eqnarray*}
for some  $C_{n+1},q_{n+1}>0.$
Since  (\ref{pz-1}) holds for $n=0,$
 on the  set
$\Omega_{\eps,n}$,  it also  holds that
 \begin{eqnarray*}
 \begin{split}
     & \sum_{k \in \cZ_{n},  }\sup_{t\in [T/2,T]} |\langle \cK_{t,T}\phi,e_k\rangle| \leq C_{n}\eps^{q_{n}} \|\phi\|
    \\    \!\Rightarrow\!\!\!\!
   & \sup_{t\in [T/2,T]} \sup_{\mbox{\tiny$\begin{array}{c}
k\in \cZ_n,\ell,j \in \cZ_0  \\
m,m',m''\in \{0,1\}
\end{array}$}}
|\langle \cK_{t,T}\phi,\cos (\ell z+\frac{\pi}{2}m') \cos (j z+\frac{\pi}{2}m'')
   \cos(kz+\frac{\pi}{2}m) \rangle|\leq C_{n+1}\eps^{q_{n+1}}\|\phi\|.
   \end{split}
 \end{eqnarray*}

Therefore, based on (\ref{pp-8})(\ref{pp-7}) and  (\ref{m-1})(\ref{m-2}), we complete  the proof.

\end{proof}

We are now in a position to give \textbf{a proof of Proposition \ref{h-5}}:
\begin{proof}
First,  we recall the definition of $\Omega_{\eps, \cM}$ in Lemma \ref{h-10} and let $C_0=1,q_0=\frac{1}{8}$.
For any $n\in \mN,$ after we   have defined     the constant $C_{n},q_{n},$
 we define $p_{n+1},q_{n+1},C_{n+1},\Omega_{\eps,n}$ by Lemma \ref{h-11}.

Let
\begin{eqnarray*}
\Omega_{\eps}= \Omega_{\eps, \cM}\cap \cap_{n=1}^{N} \Omega_{\eps, n}.
\end{eqnarray*}
Noting $\cK_{t,T}\phi=\phi$ for $t=T,$ by Lemmas \ref{h-10}, \ref{h-11},
for some positive constants   $p_N^*, q_N^*$,
 $C=C(T,N)$, we have
  \begin{eqnarray*}
    \mP((\Omega_\eps)^c )\leq C\eps ^{p_N^*},
  \end{eqnarray*}
  and
  \begin{eqnarray*}
\langle \cM_{0,T}\phi,\phi\rangle \leq \eps \|\phi\|^2 ~\Rightarrow~ \langle \cQ_N\phi,\phi\rangle \leq  C \eps^{q_N^*}\|\phi\|^2,
  \end{eqnarray*}
  which is valid  on the set  $\Omega_\eps$  for any  $\phi \in\cS_{\alpha,N}.$  The proof of  Proposition \ref{h-5} is finished.
\end{proof}

\subsection{Proof of Theorem \ref{p-9}}\label{p-59}
The aim of this subsection is to  give the proof of Theorem   \ref{p-9}.
\begin{proof}
Let  $\Omega_\eps$ be a set  given by  Proposition \ref{h-5}.
Let $\eps^{*}$  be a constant such that  for any $\eps\in (0,\eps^*]$
\begin{eqnarray}\label{p-56}
\frac{\alpha}{2}>C_2 \eps^{q_2},
\end{eqnarray}
where, $C_2,q_2$ are the  constants appeared  in   Proposition  \ref{h-5}.

By Proposition \ref{h-5},   for some $C_1,q_1>0,$ we have
 \begin{eqnarray*}
    \mP((\Omega_\eps)^c )\leq C_1\eps ^{q_1}.
  \end{eqnarray*}
On the set  $\Omega_\eps$,  for any  $\phi \in\cS_{\alpha,N},$ if
\begin{eqnarray*}
  \langle \cM_{0,T}\phi,\phi\rangle< \eps \|\phi\|^2,
\end{eqnarray*}
 by Proposition \ref{p-53} and Proposition \ref{h-5}, we have
\begin{eqnarray*}
\frac{\alpha}{2}\|\phi\|^2\leq \langle \cQ_N\phi,\phi\rangle \leq C_2\eps^{q_2}\|\phi\|^2,
\end{eqnarray*}
which contradicts	  with   (\ref{p-56}).  Therefore, (\ref{p-55}) holds on the set
$\Omega_\eps$ and we complete the proof of Theorem \ref{p-9}.
\end{proof}
 Using  Theorem \ref{p-9},  we obtain  the following gradient estimate. The method to prove the this   Proposition  is classical in this paper.  One can see
 \cite{FGRT}\cite{martin}\cite{Hairer02}\cite{Hairer} etc.

\begin{proposition}\label{p-28}
  For some $\gamma_0>0$ and every $\eta>0, U_0\in H$, the Markov semigroup $\{P_t\}_{t\geq 0}$ defined by (\ref{p-26}) satisfies the following estimate
  \begin{eqnarray*}
    \|\nabla P_t\Phi(U_0)\|\leq C\left(\sqrt{P_t(|\Phi|^2)(U_0)}+e^{-\gamma_0t}
    \sqrt{P_t(\|\nabla\Phi\|^2)(U_0)}\right)
  \end{eqnarray*}
  for every $t>0$ and $\Phi \in C_b(H)$, where $C$ is a constant   independent of $t,U_0$ and $\Phi.$
\end{proposition}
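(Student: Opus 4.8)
The plan is to run the Malliavin--Bismut argument of Hairer--Mattingly, reducing the estimate to the construction of a control (a Malliavin shift) whose residual decays exponentially while its cost stays bounded. Fix $T>0$ and $\xi\in H$ with $\|\xi\|=1$. For any control $v\in L^2([0,T];\mR^{|\cZ_0|})$ put $\rho_T:=\cJ_{0,T}\xi-\cA_{0,T}v$. Splitting $\langle\nabla\Phi(U_T),\cJ_{0,T}\xi\rangle=\langle\nabla\Phi(U_T),\rho_T\rangle+\langle\nabla\Phi(U_T),\cA_{0,T}v\rangle$ and applying the Malliavin integration-by-parts (Bismut) formula to the second term, using $\cD^j_sU_T=\cJ_{s,T}G\theta_j$, gives
\begin{equation*}
\nabla P_T\Phi(U_0)\xi=\mE\langle\nabla\Phi(U_T),\rho_T\rangle+\mE\big[\Phi(U_T)\,\mathfrak{I}(v)\big],
\end{equation*}
where $\mathfrak{I}(v)$ is the Skorokhod integral of $v$. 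By Cauchy--Schwarz, together with $\mE\|\nabla\Phi(U_T)\|^2=P_T(\|\nabla\Phi\|^2)(U_0)$ and $\mE|\Phi(U_T)|^2=P_T(|\Phi|^2)(U_0)$,
\begin{equation*}
|\nabla P_T\Phi(U_0)\xi|\le\sqrt{P_T(\|\nabla\Phi\|^2)(U_0)}\,\big(\mE\|\rho_T\|^2\big)^{1/2}+\sqrt{P_T(|\Phi|^2)(U_0)}\,\big(\mE|\mathfrak{I}(v)|^2\big)^{1/2}.
\end{equation*}
Hence it suffices to construct $v$ with $\mE\|\rho_T\|^2\le Ce^{-2\gamma_0T}$ and $\mE|\mathfrak{I}(v)|^2\le C$, both uniform in $U_0$ and $\xi$.

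Next I would build $v$ inductively on the unit intervals $[n,n+1]$. The cocycle property of $\cJ$ and the definition of $\cA$ give the recursion $\rho_{n+1}=\cJ_{n,n+1}\rho_n-\cA_{n,n+1}v^{(n)}$, where $v^{(n)}$ is the restriction of $v$ to $[n,n+1]$. On the good set $\Omega_\eps$ associated (via Theorem \ref{p-9}) to this interval I set $v^{(n)}=\cA_{n,n+1}^*(\cM_{n,n+1}+\beta)^{-1}\cJ_{n,n+1}\rho_n$, and $v^{(n)}=0$ on $\Omega_\eps^c$; the first choice yields $\rho_{n+1}=\beta(\cM_{n,n+1}+\beta)^{-1}\cJ_{n,n+1}\rho_n$. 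The one-step contraction is the crux. Writing $\cM=\cM_{n,n+1}$, let $E_{<}$ be the spectral projection of $\cM$ onto $[0,\eps)$; Theorem \ref{p-9} forces every eigenvector of $\cM$ with eigenvalue below $\eps$ to lie outside the cone $\cS_{\alpha,N}$, so $\|E_<P_N\|\le\sqrt\alpha$. Using $\|\beta(\cM+\beta)^{-1}E_{\ge}\|\le\beta/\eps$, $\|\beta(\cM+\beta)^{-1}\|\le1$, $\|\cJ_{n,n+1}\|\le1$ (Lemma \ref{p-29}) and the high-frequency smoothing bound (\ref{6-2}), I obtain with $\phi=\cJ_{n,n+1}\rho_n$
\begin{equation*}
\|\rho_{n+1}\|\le\big\|\beta(\cM+\beta)^{-1}E_{\ge}\phi\big\|+\|E_<\phi\|\le\Big(\frac{\beta}{\eps}+\sqrt\alpha+\|Q_N\cJ_{n,n+1}\|_{\cL(H,H)}\Big)\|\rho_n\|.
\end{equation*}
Choosing $\alpha$ and $\beta/\eps$ small, then $N$ large so that $\mE\|Q_N\cJ_{n,n+1}\|^2$ is small, and absorbing the bad event $\Omega_\eps^c$ (probability $\le r(\eps)$) through the pathwise contraction $\|\rho_{n+1}\|\le\|\rho_n\|$ of Lemma \ref{p-29}, gives $\mE\|\rho_{n+1}\|^2\le\theta^2\,\mE\|\rho_n\|^2$ with $\theta<1$, whence $\mE\|\rho_T\|^2\le Ce^{-2\gamma_0T}$.

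For the cost I would use Lemma \ref{rr-1}: since $\|\cA_{n,n+1}^*(\cM_{n,n+1}+\beta)^{-1/2}\|\le1$ and $\|(\cM_{n,n+1}+\beta)^{-1/2}\|\le\beta^{-1/2}$, one gets $\|v^{(n)}\|_{L^2}\le\beta^{-1/2}\|\rho_n\|$, so the geometric decay of $\mE\|\rho_n\|^2$ yields $\mE\|v\|_{L^2([0,T])}^2=\sum_n\mE\|v^{(n)}\|_{L^2}^2\le C$. The Skorokhod isometry gives $\mE|\mathfrak{I}(v)|^2\le C\big(\mE\|v\|_{L^2}^2+\mE\|\cD v\|^2\big)$, and $\cD v^{(n)}$ is expanded by the product rule into terms containing $\cD\cA^*$, $\cD\cM=\cD(\cA\cA^*)$, $\cD\cJ$ and resolvent factors $(\cM+\beta)^{-1}$; these are controlled by Lemmas \ref{rr-1} and \ref{a-2}, giving $\mE\|\cD v\|^2\le C$. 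Uniformity of $C$ in $U_0$ follows because every interval estimate for $n\ge1$ uses only the $U_0$-independent smoothing bounds of Lemmas \ref{4-1} and \ref{8-1} and the first bound in (\ref{6-2}), while the single interval $[0,1]$ contributes merely the $U_0$-free inequality $\|\rho_1\|\le\|\xi\|$.

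The main obstacle is precisely the one-step contraction: turning the \emph{quadratic-form} lower bound of Theorem \ref{p-9} on the cone $\cS_{\alpha,N}$ into an \emph{operator} bound on $\beta(\cM+\beta)^{-1}$. This rests on the observation that the small-eigenvalue subspace of $\cM$ is high-frequency dominated (so that $\|E_<P_N\|\le\sqrt\alpha$), and on simultaneously balancing $\alpha$, $\beta$, $N$ and the failure probability $r(\eps)$ so that the expected contraction factor is $<1$ uniformly in $U_0$. A secondary difficulty is the Skorokhod bound: because the control is anticipating, its Malliavin derivative involves resolvents bounded only by $\beta^{-1}$, and taming these negative powers of $\beta$ requires the full strength of Lemmas \ref{rr-1} and \ref{a-2}.
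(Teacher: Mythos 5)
Your proposal runs the same FGRT/Hairer--Mattingly scheme as the paper: Bismut's formula splits $\nabla P_T\Phi(U_0)\xi$ into a residual term and a Skorokhod-integral term, the control is built block-by-block from the regularized inverse of the Malliavin matrix, the residual contracts thanks to Theorem \ref{p-9} plus high-frequency smoothing, and the cost is controlled via Lemmas \ref{rr-1} and \ref{a-2}. Where you genuinely differ is the mechanism of the one-step contraction. The paper works on two-interval blocks: the control acts on $[n,n+1]$, is switched off on $[n+1,n+2]$, and the residual is split as $\rho_{n+2}=\cJ_{n+1,n+2}Q_N\cR^{\beta}_{n,n+1}\cJ_{n,n+1}\rho_n+\cJ_{n+1,n+2}P_N\cR^{\beta}_{n,n+1}\cJ_{n,n+1}\rho_n$ (with $\cR^{\beta}_{n,n+1}=\beta(\cM_{n,n+1}+I\beta)^{-1}$); the high-frequency piece is killed by the \emph{second}, $U$-dependent bound of (\ref{6-2}) applied to $\cJ_{n+1,n+2}Q_N$ together with the conditional moment bound $\mE\big(1+\|U_{n+1}^{8}\|^2\,\big|\,\cF_n\big)\leq C_0$ --- this is precisely why the idle interval is needed. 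You instead work on single intervals and split with the spectral projections $E_<,E_{\geq}$ of $\cM_{n,n+1}$; your key observation, that on $\Omega_\eps$ every nonzero vector in the range of $E_<$ satisfies $\langle\cM\phi,\phi\rangle<\eps\|\phi\|^2$ and hence lies outside $\cS_{\alpha,N}$, so that $\|E_<P_N\|=\|P_NE_<\|\leq\sqrt{\alpha}$, is correct, and it lets the high-frequency loss be charged to $Q_N\cJ_{n,n+1}$, i.e.\ to the \emph{first}, initial-condition-free bound in (\ref{6-2}). This is a legitimate alternative route: it dispenses with the on/off block structure and makes the uniformity in $U_0$ more transparent.

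There is, however, one genuine flaw as written: you define $v^{(n)}=\cA^{*}_{n,n+1}(\cM_{n,n+1}+\beta)^{-1}\cJ_{n,n+1}\rho_n$ on $\Omega_\eps$ and $v^{(n)}=0$ on $\Omega_\eps^{c}$, which inserts the factor $\mathbf{1}_{\Omega_\eps}$ into the control. An indicator $\mathbf{1}_A$ lies in the domain of the Malliavin derivative only when $\mP(A)\in\{0,1\}$, so your $v$ is not Malliavin differentiable and the bound $\mE|\mathfrak{I}(v)|^2\leq C\big(\mE\|v\|^2_{L^2}+\mE\|\cD v\|^2\big)$, on which your whole cost estimate rests, cannot be invoked. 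The repair is exactly what the paper does: define $v^{(n)}$ by the same formula on all of $\Omega$ and use the good/bad dichotomy only inside the estimates. Nothing is lost in the contraction, since on $\Omega_\eps^{c}$ you still have the pathwise bound $\|\rho_{n+1}\|=\|\cR^{\beta}_{n,n+1}\cJ_{n,n+1}\rho_n\|\leq\|\rho_n\|$ from $\|\cR^{\beta}_{n,n+1}\|\leq 1$ and (\ref{p-38}); your appeal to the case $v^{(n)}=0$ is unnecessary. Two further points you should make explicit, though they are shared with (and glossed over by) the paper: applying Theorem \ref{p-9} conditionally on $\cF_n$ with the random initial datum $U_n$ requires the failure probability $r(\eps)$ and threshold $\eps^{*}$ to be uniform over initial conditions, which is what the smoothing estimates of Section 2 provide; and in the Skorokhod cost, $\cD_s v^{(n)}$ for $s<n$ also contains the term involving $\cD_s\rho_n$, absent from your product-rule list, which must be controlled recursively using the geometric decay of $\mE\|\rho_n\|^{p}$.
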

\begin{proof}
Our proof is very similar to that in \cite[Section 3]{FGRT} except some little changes.

We build the control $v$ and derive the associated $\rho_t=\cJ_{0,t}\xi-\cA_{0,t}v$  in (\ref{a-1}) using the same interative construction  as that in  \cite{FGRT}. Denote by $v_{s,t}$ the control $v$ restricted to the time interval $[s,t]$.
Obviously,  $\rho_0=\xi$ and $\rho_t$ depends on $\xi,t,v_{0,t}.$
  For each even non-negative integer $n\in 2\mN,$ having determined $v_{0,n}$  and $\rho_n$, we set
\begin{eqnarray*}
  v_{n,n+1}(r):&=&(\cA^{*}_{n,n+1}(\cM_{n,n+1}+I\beta)^{-1}\cJ_{n,n+1}\rho_n)(r),
\quad v_{n+1,n+2}(r)=0,
\end{eqnarray*}
for $r\in[n,n+2],$ where $\beta=\beta(n)>0$ is to be determined in (\ref{a-4}) below.

We define
\begin{eqnarray*}
  \cR_{n,n+1}^\beta:=\beta(\cM_{n,n+1}+I\beta)^{-1}.
\end{eqnarray*}
As that in \cite{FGRT}, we split $\rho_{n+2}=\rho_{n+2}^H+\rho_{n+2}^L,$ where
\begin{eqnarray}
\label{a-3}
\rho_{n+2}^H=\cJ_{n+1,n+2}Q_N \cR_{n,n+1}^\beta\cJ_{n,n+1}\rho_n,~~\rho_{n+2}^L=\cJ_{n+1,n+2}P_N \cR_{n,n+1}^\beta\cJ_{n,n+1}\rho_n.
\end{eqnarray}
By (\ref{3-1}),
for some absolute constant $C_0>1$, we have
\begin{eqnarray*}
\mE
 (1+\|U_{n+1}^8\|^2)
   \big|\cF_n\leq C_0.
\end{eqnarray*}
Set  $\delta=\frac{1}{2^9C_0}.$
By the above inequality, Lemma \ref{rr-1} and   (\ref{p-38})(\ref{6-2}), one sees that
 \begin{eqnarray}
  \nonumber  \mE  (\|\rho_{n+2}^H\|^8 |\cF_n) &\leq & \|\rho_n\|^8 \mE \Big(
   \mE \big(\|\cJ_{n+1,n+2}Q_N\|^8|\cF_{n+1}\big)\cdot \|\cJ_{n,n+1}\|^8
   \big|\cF_n\Big)
   \\ \nonumber  &\leq &
    \|\rho_n\|^8 \mE \big(
   \delta (1+\|U_{n+1}^8\|^2))\cdot \|\cJ_{n,n+1}\|^8
   \big|\cF_n\big)
   \\ \label{50-1}&\leq &C_0 \delta \|\rho_n\|^8
 \end{eqnarray}
for   appropriate $N=N(\delta).$
  Fix such an $N$ in  (\ref{a-3}).
  Following  the lines in the  \cite[Lemma 3.1]{FGRT},  noting  Lemmas \ref{p-29}-\ref{a-2} and   Theorem \ref{p-9},      there exists $\beta=\beta(n)>0$ such that
 \begin{eqnarray}
 \label{a-4}
   \mE (\|\rho_{n+2}^L\|^8 |\cF_n) &\leq & \delta \|\rho_n\|^8.
 \end{eqnarray}
By (\ref{50-1}) and  (\ref{a-4}),  we have
\begin{eqnarray*}
 \mE ( \|\rho_{n+2}\|^8 |\cF_n) \leq 2^7 \mE ( \|\rho_{n+2}^L\|^8 +\|\rho_{n+2}^H\|^8|\cF_n) \leq
 2^7 \cdot 2 C_0 \delta     \|\rho_{n}\|^8= \frac{1}{2}   \|\rho_{n}\|^8
\end{eqnarray*}
which implies that, for any  even non-negative integer $n,$ we have
\begin{eqnarray}
\label{rr-2}
\mE   \|\rho_{n}\|^8\leq 2^{-n/2}\|\xi\|^8.
\end{eqnarray}

Based on (\ref{rr-2}) the  estimates in Section 2,  following the lines in   \cite[Section 3]{FGRT}, we deduce that
 \begin{eqnarray*}
   \sup_{\|\xi\|=1,t\geq 0}\mE \left|\int_0^t v \cdot \dif W\right|\leq C
 \end{eqnarray*}
 and
 for some $\gamma_0>0,$
 \begin{eqnarray*}
   \sup_{\|\xi\|=1}\mE \|\rho_t\|^2\leq Ce^{-\gamma_0t}.
 \end{eqnarray*}
 By \cite[Section 3.1]{FGRT}, we complete our  proof.

\end{proof}

\section{Proof of Theorem  \ref{p-27}}

Let $H$ be a Banach space.
Recall that
\begin{eqnarray*}
   d(x,y)=1\wedge \delta^{-1}\|x-y\|, ~~\forall x,y\in H,
 \end{eqnarray*}
 where $\delta$ is a small parameter to be adjusted later on.
 On the set
 \begin{eqnarray*}
Pr_1(H):=\left\{\mu\in Pr(H):\int_H d(0,u)\dif \mu(u)<\infty\right\},
 \end{eqnarray*}
the    metric $d$  induces a   Wasserstein-Kantorovich  distance defined   by
\begin{eqnarray*}
  d(\mu_1,\mu_2)=\sup_{\|\Phi\|_{d}\leq 1}\left|\int_{H}\Phi(x)\mu(\dif x)-\int_{H}\Phi(x)\nu(\dif x)\right|
\end{eqnarray*}
 where $\|\Phi\|_{d}$ denotes the Lipschitz constant of $\Phi$ in the metric $d$.

We recall the following  abstract results.
Then, we give a proof of Theorem  \ref{p-27}.
\begin{thm}(See \cite[Theorem 2.5]{Hairer02}.)
\label{i-1}
Let $(P_t)_{t>0}$ be a Markov semigroup over a Banach space $H$
satisfying
\begin{itemize}
  \item[(1)]
  there   exist constants $\alpha\in  (0,1)$,  $C > 0$ and $T_1 > 0$ such
that
\begin{eqnarray}
\label{r-1}
 \|D P_t\Phi\|_\infty\leq C\|\Phi\|_\infty+\alpha_1 \|D\Phi\|_\infty,
\end{eqnarray}
for every $t \geq T_1$  and every Fr\'echet differentiable function $\Phi:H\rightarrow \mR;$
\item[(2)]
for  every $\delta > 0$, there exists a $T_2=T_2(\delta)$  so that for any
$t > T_2$ there exists an $a > 0$ so that
\begin{eqnarray}
\label{r-2}
  \sup_{\Gamma \in \cC(P_t^*\delta_{U_0},P_t^*\delta_{\widetilde{U}_0}) } \Gamma\{(U',U'')\in H\times H:\|U'-U''\|<\delta\}\geq a,
\end{eqnarray}
for every $U_0,\widetilde{U}_0\in H.$
Here $\delta_U$ is the dirac measure concentrated at $U$, the operator $P_t^*$ is defined by (\ref{e-1}) and  $\cC(\mu_1,\mu_2)$ denotes   the set of all measures $\pi$
on $H \times H$ such that $\pi(A \times H)=\mu_1(A)$ and
$\pi(H\times A)=\mu_2(A)$ for every Borel set~$ A \subset H,$
\end{itemize}
 Then, there exist constants $\delta > 0, \alpha < 1$ and $T > 0$
such that
\begin{eqnarray}
\label{e-2}
  d(P_T^*\mu_1,P_T^*\mu_2)\leq  \alpha d(\mu_1,\mu_2)
\end{eqnarray}
for every pair of probability measures $\mu_1,\mu_2$ on $H$. In particular, $(P_t)_{t>0}$ has
a unique invariant measure $\mu_*$ and its transition probabilities converge exponentially fast to $\mu_*.$
\end{thm}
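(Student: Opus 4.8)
The plan is to reduce everything to a single strict contraction in the Wasserstein metric $d$: I aim to produce $T>0$, $\delta>0$ and $\alpha<1$ with
\begin{eqnarray*}
  d(P_T^*\mu_1,P_T^*\mu_2)\leq \alpha\, d(\mu_1,\mu_2)
\end{eqnarray*}
for all $\mu_1,\mu_2\in Pr_1(H)$. Since $(Pr_1(H),d)$ is complete, the Banach fixed point theorem then yields a unique $P_T^*$-fixed point $\mu_*$ with $d(P_{nT}^*\mu,\mu_*)\leq\alpha^n d(\mu,\mu_*)$; averaging $P_s^*\mu_*$ over $s\in[0,T]$ and using $P_{s+T}^*\mu_*=P_s^*\mu_*$ upgrades $\mu_*$ to a measure invariant under every $P_t$ and converts the geometric decay along $nT$ into the stated continuous-time exponential estimate. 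Finally, because the Wasserstein functional is convex and $d(P_T^*\mu_1,P_T^*\mu_2)\leq\int d(P_T^*\delta_x,P_T^*\delta_y)\,\dif\Gamma_0$ for an optimal coupling $\Gamma_0$ of $\mu_1,\mu_2$, it suffices to prove $d(P_T^*\delta_x,P_T^*\delta_y)\leq\alpha\,d(x,y)$ for Dirac masses, and by the duality $d(P_T^*\delta_x,P_T^*\delta_y)=\sup_{\|\Phi\|_d\leq1}|P_T\Phi(x)-P_T\Phi(y)|$ this becomes an estimate on test functions.

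First I would iterate the gradient bound (\ref{r-1}) along multiples of $T_1$ to obtain, for every Fr\'echet differentiable $\Phi$,
\begin{eqnarray*}
  \|DP_{nT_1}\Phi\|_\infty\leq \frac{C}{1-\alpha_1}\|\Phi\|_\infty+\alpha_1^{\,n}\|D\Phi\|_\infty .
\end{eqnarray*}
Any admissible test function may be centred so that $\|\Phi\|_\infty\leq 1$, while $\|\Phi\|_d\leq1$ forces the Euclidean Lipschitz bound $\|D\Phi\|_\infty\leq\delta^{-1}$. Hence on the ``close'' regime $\|x-y\|\leq\delta$, where $d(x,y)=\delta^{-1}\|x-y\|$, the mean value inequality gives
\begin{eqnarray*}
  |P_T\Phi(x)-P_T\Phi(y)|\leq\Big(\frac{C\delta}{1-\alpha_1}+\alpha_1^{\,T/T_1}\Big)\,d(x,y),
\end{eqnarray*}
which is a genuine contraction once $T$ is a large multiple of $T_1$ and $\delta$ is small.

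For the ``far'' regime $\|x-y\|>\delta$, where $d(x,y)=1$, the gradient estimate is useless and condition (\ref{r-2}) must enter. Here I would split the time as $T=t^*+S$ with $S$ a large multiple of $T_1$, write $P_T\Phi=P_{t^*}\Psi$ with $\Psi:=P_S\Phi$, and record two facts: $\Psi$ has oscillation at most that of $\Phi$, hence at most $1$, while by the iterated gradient bound its Euclidean Lipschitz constant obeys $L_\Psi\leq\frac{C}{1-\alpha_1}+\alpha_1^{\,S/T_1}\delta^{-1}$. Applying (\ref{r-2}) at the coupling time $t^*>T_2(\delta)$ produces $\Gamma\in\cC(P_{t^*}^*\delta_x,P_{t^*}^*\delta_y)$ with $\Gamma\{\|U'-U''\|<\delta\}\geq a$; splitting the integral of $\Psi(U')-\Psi(U'')$ over this event and its complement gives
\begin{eqnarray*}
  |P_T\Phi(x)-P_T\Phi(y)|\leq L_\Psi\,\delta+(1-a)\leq \frac{C\delta}{1-\alpha_1}+\alpha_1^{\,S/T_1}+(1-a).
\end{eqnarray*}
Taking $S$ large enough that $\alpha_1^{\,S/T_1}<a/2$ forces this below $1$ as soon as $C\delta/(1-\alpha_1)<a/2$, and the same $\delta$ keeps the close-regime factor bounded away from $1$; setting $\alpha$ to be the larger of the two factors closes the Dirac contraction.

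The main obstacle is precisely this last calibration. The threshold $\delta$ in (\ref{r-2}) is the very $\delta$ defining $d$, so the overlap constant $a=a(\delta)$ depends on it, and the far-regime estimate demands $C\delta/(1-\alpha_1)<a(\delta)/2$: one must shrink $\delta$ to kill the Lipschitz loss $L_\Psi\delta$ without destroying the coupling mass $a(\delta)$ faster than linearly in $\delta$. The delicate point is to verify that the quantifiers of (\ref{r-1}) and (\ref{r-2}) can be closed in a consistent order --- fix $\delta$ small; read off $T_2(\delta)$, a coupling time $t^*$ and the overlap $a$; then enlarge $S$ and set $T=t^*+S$ without further disturbing $\delta$. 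The remaining ingredients, namely completeness of $(Pr_1(H),d)$, the convexity reduction from general measures to Dirac masses, and the promotion from the discrete times $nT$ to continuous time, are routine once the Dirac contraction is secured.
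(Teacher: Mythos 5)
You should first note that the paper does not prove Theorem \ref{i-1} at all: it is quoted verbatim from \cite[Theorem 2.5]{Hairer02}, so there is no internal proof to compare against and your argument must stand on its own. Its architecture --- reduction to Dirac masses by convexity and duality, a ``close'' regime handled by iterating the gradient bound (\ref{r-1}), a ``far'' regime handled by the overlap condition (\ref{r-2}), then a fixed-point argument plus semigroup commutation for the invariant measure --- is indeed the standard Hairer--Mattingly scheme, and those surrounding steps are fine. But the far-regime calibration, which you yourself flag as ``the main obstacle,'' is a genuine gap, and the quantifier order you propose does not close it. Your far-regime estimate $|P_T\Phi(x)-P_T\Phi(y)|\leq L_\Psi\delta+(1-a)$ forces the requirement $C\delta/(1-\alpha_1)<a(\delta)$; however, condition (\ref{r-2}) asserts only that \emph{some} $a>0$ exists for each threshold and gives no quantitative lower bound --- $a(\delta)$ could perfectly well decay like $e^{-1/\delta}$ --- so ``shrink $\delta$ without destroying $a(\delta)$ faster than linearly'' is not verifiable from the hypotheses, and enlarging $S$ only removes the $\alpha_1^{S/T_1}$ term, never the fixed positive term $C\delta/(1-\alpha_1)$. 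As written, the contraction is not established.

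The gap is, however, an artifact of an unnecessarily lossy estimate, and it has a one-line repair. Splitting over the event $A=\{\|U'-U''\|<\delta\}$ and keeping the coupling mass multiplicative gives
\begin{equation*}
|P_T\Phi(x)-P_T\Phi(y)|\;\leq\; L_\Psi\delta\,\Gamma(A)+\mathrm{osc}(\Psi)\big(1-\Gamma(A)\big)\;=\;1-\Gamma(A)\big(1-L_\Psi\delta\big),
\end{equation*}
so all that is needed is $L_\Psi\delta<1$ together with $\Gamma(A)\geq a/2>0$ (admissible, since the supremum in (\ref{r-2}) is at least $a$). Now $L_\Psi\delta\leq \frac{C\delta}{1-\alpha_1}+\alpha_1^{S/T_1}$ is exactly the quantity you already force below $1$ for the close regime, so no relation whatsoever between $\delta$ and $a$ is required: fix $\delta$ and $S$ so that $\frac{C\delta}{1-\alpha_1}+\alpha_1^{S/T_1}\leq 2/3$, then read off $T_2(\delta)$, $t^*>T_2(\delta)$ and $a>0$ from (\ref{r-2}), set $T=t^*+S$, and the two contraction factors are $2/3$ and $1-\frac{a}{2}\cdot\frac13$, both strictly below $1$. (Alternatively, one may decouple the two roles of $\delta$: since (\ref{r-2}) is universally quantified in its threshold, apply it with a second parameter $\delta'\ll\delta$ chosen \emph{after} $\delta$ and $S$, so the loss on the coupling event is $L_\Psi\delta'$, arbitrarily small, while $a(\delta')$ need only be positive.) One further technicality you should acknowledge: the dual supremum defining $d(\mu_1,\mu_2)$ runs over all $d$-Lipschitz $\Phi$, whereas (\ref{r-1}) applies to Fr\'echet differentiable $\Phi$, so one must either restrict the supremum to smooth test functions by an approximation argument or mollify. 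With these repairs your proof closes; without them, the far-regime step fails.
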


\begin{thm}
  (See \cite[Theorem 2.1]{KW}.)
  \label{s-3}
  Let $(P_t)_{t\geq 0}$ be a Feller Markov semigroup on a metric space $(H,d)$ with the continuity property: $\lim_{t\rightarrow 0}P_t\Phi(U_0)=\Phi(U_0)$ for all $\Phi\in C_b(H),U_0\in H.$ Let $P_t(U_0,A)$ be the associated transition functions. Suppose that $(P_t)_{t\geq 0}$ satisfy
  \begin{itemize}
    \item[(1)]
     for some $C,\gamma>0$  and  every   $\mu_1,\mu_2\in Pr_1(H)$,   \
    \begin{eqnarray}
    \label{i-2}
      d(P_t^*\mu_1,P_t^*\mu_2)\leq Ce^{-\gamma t} d(\mu_1,\mu_2),
    \end{eqnarray}
    \item[(2)] for every $R>0$
  \begin{eqnarray}
  \label{s-2}
    \sup_{t\geq 0}\sup_{U_0\in B_R}\int_H |d(0,U)|^3P_t(U_0,\dif U)<\infty,
  \end{eqnarray}
  where $B_R:=\{U_0\in H,~d(0,U_0)<R\}.$
  \end{itemize}
  Then, there exists a unique invariant probability measure $\mu_*\in Pr_1(H)$ such that for any $\Phi\in C^1(H)$ and any $U_0\in H$
   \begin{eqnarray*}
     \lim_{T\rightarrow \infty}\frac{1}{T}\int_0^T \Phi(U(t,U_0))\dif t=\int_H \Phi(\bar{U})\dif \mu_*(\bar{U})=:m_{\Phi} \text{  in probability.}
   \end{eqnarray*}
   Moreover, the limit $\sigma^2=\lim_{T\rightarrow \infty}\frac{1}{T}\mE \left(\int_0^T (\Phi(U(t,U_0))-m_{\Phi}) \dif t\right)^2$ exists and
      \begin{eqnarray*}
        \lim_{T\rightarrow \infty}\mP\left(\frac{1}{\sqrt{T}}\int_0^T (\Phi(U(t,U_0))-m_{\Phi})\dif t<\xi\right)=\cX_\sigma(\xi),
      \end{eqnarray*}
      where $\cX_\sigma$ is the distribution function of a normal random variable with zero mean and variance $\sigma^2$.
\end{thm}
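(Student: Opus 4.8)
The plan is to exploit the two hypotheses in complementary ways: the Wasserstein contraction (\ref{i-2}) produces existence, uniqueness and an exponential decay of correlations, while the uniform cubic moment bound (\ref{s-2}) supplies the integrability that converts this decay into a quantitative variance estimate and, eventually, into a martingale central limit theorem. Throughout I treat $\Phi\in C^1(H)$ as $d$-Lipschitz, and I set $\bar\Phi:=\Phi-m_\Phi$.

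First I would settle existence and uniqueness of $\mu_*$. Uniqueness is immediate: two invariant measures $\mu_1,\mu_2$ satisfy $d(\mu_1,\mu_2)=d(P_t^*\mu_1,P_t^*\mu_2)\leq Ce^{-\gamma t}d(\mu_1,\mu_2)$ for all $t$, forcing $\mu_1=\mu_2$. For existence I fix $U_0$ and show that $t\mapsto P_t^*\delta_{U_0}$ is $d$-Cauchy via
\begin{equation*}
 d(P_{t+s}^*\delta_{U_0},P_t^*\delta_{U_0})=d\big(P_t^*(P_s^*\delta_{U_0}),P_t^*\delta_{U_0}\big)\leq Ce^{-\gamma t}\,d(P_s^*\delta_{U_0},\delta_{U_0}),
\end{equation*}
where (\ref{s-2}) bounds $d(P_s^*\delta_{U_0},\delta_{U_0})$ uniformly in $s$ through the uniform first--moment control. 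Then, $(Pr_1(H),d)$ being complete, I obtain a limit $\mu_*$; the Feller and continuity hypotheses give invariance upon passing to the limit in $P_{t+s}^*\delta_{U_0}=P_t^*(P_s^*\delta_{U_0})$, and (\ref{s-2}) places $\mu_*$ in $Pr_1(H)$. Uniqueness moreover makes $\mu_*$ ergodic.

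Next, for the weak law of large numbers I would record the decay forced by (\ref{i-2}): since $\Phi$ is $d$-Lipschitz, $|P_t\Phi(x)-P_t\Phi(y)|\leq Ce^{-\gamma t}d(x,y)$, and hence
\begin{equation*}
 |P_t\bar\Phi(x)|=\Big|\int_H\big(P_t\Phi(x)-P_t\Phi(y)\big)\,\dif\mu_*(y)\Big|\leq Ce^{-\gamma t}\big(1+d(0,x)\big).
\end{equation*}
By the Markov property $\mE[\bar\Phi(U_s)\bar\Phi(U_t)]=\mE[\bar\Phi(U_s)\,(P_{t-s}\bar\Phi)(U_s)]$ for $t\geq s$, and combining the exponential factor with (\ref{s-2}) through H\"older's inequality bounds this covariance by $Ce^{-\gamma(t-s)}$, uniformly in $s$. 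Integrating over $[0,T]^2$ yields $\mE\big(\frac1T\int_0^T\bar\Phi(U_t)\,\dif t\big)^2\leq C/T$, i.e. convergence in $L^2$, hence in probability, to $m_\Phi$.

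Finally I would obtain the central limit theorem by the Gordin--Kipnis--Varadhan martingale approximation. Define the corrector $g:=\int_0^\infty P_s\bar\Phi\,\dif s$; the bound above makes the integral absolutely convergent and shows $g$ is $d$-Lipschitz with at most linear growth, and $P_tg=g-\int_0^t P_s\bar\Phi\,\dif s$. Using the Markov property one checks that
\begin{equation*}
 M_t:=g(U_t)-g(U_0)+\int_0^t\bar\Phi(U_s)\,\dif s
\end{equation*}
is a martingale, whence
\begin{equation*}
 \frac{1}{\sqrt T}\int_0^T\bar\Phi(U_s)\,\dif s=\frac{1}{\sqrt T}M_T+\frac{1}{\sqrt T}\big(g(U_0)-g(U_T)\big).
\end{equation*}
The boundary term vanishes in probability after division by $\sqrt T$ because $g(U_T)$ has moments bounded uniformly in $T$ by (\ref{s-2}). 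For $\frac1{\sqrt T}M_T$ I would invoke the martingale central limit theorem: $\frac1T\langle M\rangle_T\to\sigma^2$ in probability by the ergodic theorem under $\mu_*$ (transferring to a general initial condition via the mixing of the previous step together with uniqueness of $\mu_*$), and the Lindeberg condition follows from the cubic moment bound on the increments. The identity $\sigma^2=\lim_{T\to\infty}\frac1T\mE\big(\int_0^T\bar\Phi(U_t)\,\dif t\big)^2$ then comes out of the decomposition, the boundary and cross terms being of lower order. The main obstacle is precisely this last step: because the mixing in (\ref{i-2}) holds only in the weak Wasserstein metric rather than in total variation, the corrector $g$ is merely Lipschitz with growth and not bounded, so the cubic bound (\ref{s-2}) must be used with care both to control the quadratic variation of $M$ and to verify Lindeberg, and to show that the transient from a non-stationary start does not affect the limiting variance.
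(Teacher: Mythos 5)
This theorem is not proved in the paper at all: it is quoted, with attribution, as Theorem 2.1 of Komorowski and Walczuk \cite{KW}, and the paper's ``proof'' consists of that citation. So the meaningful comparison is with the argument in \cite{KW}, and your sketch reconstructs essentially that proof: uniqueness and existence of $\mu_*$ from the contraction (\ref{i-2}) together with completeness of $(Pr_1(H),d)$; the pointwise decay $|P_t\bar\Phi(x)|\leq Ce^{-\gamma t}(1+d(0,x))$ combined with the moment bound (\ref{s-2}) giving an $O(1/T)$ variance and hence the weak law of large numbers; and the Gordin--Kipnis--Varadhan corrector $g=\int_0^\infty P_s\bar\Phi\,\dif s$ with the martingale decomposition $M_t=g(U_t)-g(U_0)+\int_0^t\bar\Phi(U_s)\,\dif s$ for the central limit theorem. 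Two places where your outline is thinner than the actual argument deserve mention. First, your appeal to ``the ergodic theorem under $\mu_*$'' for $\frac1T\langle M\rangle_T\to\sigma^2$ tacitly assumes that $\langle M\rangle_T$ is an additive functional $\int_0^T h(U_s)\,\dif s$, which would require $g$ and $g^2$ to lie in the domain of the generator so that a carr\'e du champ is available; this is exactly what one cannot guarantee when the mixing holds only in the Wasserstein metric. The proof in \cite{KW} circumvents this by discretizing time, writing $M_n$ as a sum of increments $M_k-M_{k-1}$ and proving convergence of $\frac1n\sum_k\mE\big[(M_k-M_{k-1})^2\mid\cF_{k-1}\big]$ in $L^1$ directly from the spectral-gap estimate (\ref{i-2}) and the moment bound (\ref{s-2}), rather than invoking a pointwise ergodic theorem from a non-stationary start. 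Second, the exponent $3$ in (\ref{s-2}) is precisely what delivers moments of order $2+\epsilon$ for the martingale increments (since $\bar\Phi$ and $g$ grow at most linearly in $d(0,\cdot)$), and hence the Lindeberg condition; your sketch gestures at this correctly but it is the only point where the cubic, rather than quadratic, moment is used. With these two steps carried out as in \cite{KW}, your proposal is a correct reconstruction of the cited proof.
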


We are now in a position to give a proof of Theorem  \ref{p-27}.
\begin{proof}

Recall that $U_t=U(t,U_0)$ is the solution of
(\ref{p-1}). For any $t>0,U_0\in H$ and $E\in \cB(H),$
$P_t(U_0,E),P_t$ and $P_t^*$ are defined by (\ref{zhang-1})-(\ref{e-1}) respectively.

We divide  our proof into two parts (a) and (b). In the first part (a), we  use Theorem \ref{i-1} to   prove   (\ref{p-31}). In the second part (b), we   use  Theorem \ref{s-3} to give a proof of    (\ref{p-35}) and (\ref{p-36}).

(a)  First, by Proposition \ref{p-28},  (\ref{r-1}) holds.
For any $r>0,$ we use $B_r$ to denote $\{U'\in H,\|U'\| \leq r \}.$  Following the same way as that  in \cite[Page 2489]{FGRT}, one arrives at that   for any  $\gimel,\delta>0$ there exists $T_*=T_*(\gimel,\delta)\geq 0$ such that
\begin{eqnarray}
\label{r-3}
\inf_{\|U\|\leq \gimel } P_T(U, B_\delta)>0,
\end{eqnarray}
for any $T>T_*.$

By Lemma \ref{4-1}, there exist positive  constants  $C_1$ and $\gamma$ such that  for any $\gimel,\delta>0$ and  $T>t=1$, we have
\begin{eqnarray}
  \nonumber && P_T\Big(\overline{U}_0,B_{\frac{\delta}{2}}\Big)=\int_{H}P_t\left(\overline{U}_0,\dif U\right)P_{T-t}\Big(U, B_{\frac{\delta}{2}}  \Big)
  \\ \nonumber  &&\geq \int_{B_\gimel}P_t\left(\overline{U}_0,\dif U\right)\inf_{U\in B_\gimel }P_{T-t}\Big(U,B_{\frac{\delta}{2}}  \Big)
  \geq (1-\frac{\mE\|U_t\|}{\gimel})\inf_{U\in B_\gimel }P_{T-t}\Big(U, B_{\frac{\delta}{2}} \Big)
    \\ \label{y-1} &&\geq (1-\frac{C_1(1+t^{-\gamma})}{\gimel})\inf_{U\in B_\gimel }P_{T-t}\Big(U, B_{\frac{\delta}{2}}  \Big)
       \geq (1-\frac{2C_1 }{\gimel})\inf_{U\in B_\gimel }P_{T-t}\Big(U, B_{\frac{\delta}{2}} \Big),
\end{eqnarray}
where $U_t$ is the solution to equation (\ref{p-1}) with initial value $\overline{U}_0.$
In the above inequality, we set $\gimel=4C_1$. By (\ref{r-3}), there exists  $T^*=T^*(\gimel,\delta)$ such that for any $T>T^*,$
\begin{eqnarray*}
\inf_{\|U \|\leq \gimel } P_{T-t}(U, B_{\frac{\delta}{2}})>0.
\end{eqnarray*}
 Combining  the above inequality with (\ref{y-1}),  noting  $\gimel=4C_1$,  one arrives at that
\begin{eqnarray}
\label{u-1}
\inf_{ \overline{U}_0 \in H  } P_{T}(\overline{U}_0, B_{\frac{\delta}{2}})>0
\end{eqnarray}
for $T\geq T^*.$

For any $U_0,\widetilde{U}_0\in H$ and $T>0$, we define  $\tilde{\Gamma}_{U_0,\widetilde{U}_0}\in Pr(H\times H)$ by
\begin{eqnarray*}
\tilde{\Gamma}_{U_0,\widetilde{U}_0}(A_1\times A_2):=P_T(U_0,A_1)P_T(\widetilde{U}_0,A_2)\quad \text{ for any } A_1,A_2\in \cB(H).
\end{eqnarray*}
Then, by (\ref{u-1}),  we have
\begin{eqnarray*}
  && \sup_{\Gamma \in \cC(P_t^*\delta_{U_0},P_t^*\delta_{\widetilde{U}_0})} \Gamma\{(U',U'')\in H\times H:\|U'-U''\|<\delta\}
  \\ && \geq \tilde{\Gamma}_{U_0,\widetilde{U}_0}\left\{ (U',U'')\in H\times H:\|U'-U''\|<\delta \right\}
  \\ &&\geq P_T\left(U_0,B_{\frac{\delta}{2}}\right)\cdot P_T\left(\widetilde{U}_0,B_{\frac{\delta}{2}}\right)
  \geq \left(\inf_{ \overline{U}_0 \in H  } P_{T}(\overline{U}_0, B_{\frac{\delta}{2}})\right)^2>0
\end{eqnarray*}
which yields (\ref{r-2}).

By  Theorem \ref{i-1}, for some $\alpha<1,T>0$ and every pair of probability measures
$\mu_1,\mu_2$ on $H$, we have
\begin{eqnarray*}
  d(P_T^*\mu_1, P_T^*\mu_2)\leq \alpha d(\mu_1,\mu_2).
\end{eqnarray*}
Therefore, for some $C,\gamma>0$  and  every   $\mu_1,\mu_2\in Pr_1(H)$,   we have
    \begin{eqnarray}
    \label{iii-2}
      d(P_t^*\mu_1,P_t^*\mu_2)\leq Ce^{-\gamma t} d(\mu_1,\mu_2).
    \end{eqnarray}
Also by Theorem \ref{i-1},  $(P_t)_{t>0}$ has
a unique invariant measure $\mu_*.$

In (\ref{iii-2}), letting $\mu_1=P_t^*\delta_{U_0}$ and $\mu_2=\mu_*$, one sees that
  \begin{eqnarray*}
      d(P_t^*\delta_{U_0},P_t^*\mu^*)\leq Ce^{-\gamma t} d(\delta_{U_0},\mu^*),
    \end{eqnarray*}
    which implies
    \begin{eqnarray*}
  \sup_{\|\Phi\|_{d}\leq 1}\left|P_t\Phi(U_0)-\int_{H}\Phi(z)\mu^*(\dif z)\right|\leq  Ce^{-\gamma t}.
\end{eqnarray*}
We complete the proof of  (\ref{p-31}).

(b) By $It\^{o}$ formula and (\ref{p-1}), for any $\eta>0,$ it gives that
  \begin{eqnarray*}
  &&  \nonumber  \eta \|U_t\|^2-\eta \|U_0\|^2+2\eta \int_0^t \| U_s\|_1^2\dif s
    \\ \nonumber  && = \eta \cE_0t +2\eta \int_0^t \langle U_s, G\dif W_s\rangle
    +2\eta \int_0^t \langle U_s,U_s\rangle\dif s-2\eta\int_0^t \langle U_s,U_s^3\rangle\dif s
   \\   && \leq  \eta (\cE_0+4 \pi)t +2\eta \int_0^t \langle U_s, G\dif W_s\rangle
   -2\eta \int_0^t \|U_s(z)\|^2\dif s.
  \end{eqnarray*}
  Let  $\bar{U}(t)=\eta \|U_t\|^2, \bar{Z}(t)=\eta  \|U_t\|_1^2+\eta \|U_t\|^2 $,
  then we have
  \begin{eqnarray*}
    \eta (\cE_0+4 \pi)-2\eta \|U_s\|_1^2-2\eta \|U_s\|^2
    &\leq  &   \eta (\cE_0+4 \pi) -2\bar{Z}(t),
    \\ 4\eta^2 |\langle U_s,G\rangle|^2 &\leq &  4\eta \cE_0  \bar{Z}(t).
  \end{eqnarray*}
By \cite[lemma 5.1]{Hairer02},   there exists $\eta^*>0,$ such that for any $\eta\in (0,\eta^*]$
  \begin{eqnarray*}
    \mE\[\exp\Big\{\eta \|U_t\|^2_H+\frac{1}{2}e^{-t/2}\int_0^t \eta \| U_s\|^2_1\dif s \Big\}\]\leq C(\eta,\cE_0)\exp\{\eta \|U(0)\|^2e^{- t}\},
  \end{eqnarray*}
  which yields (\ref{s-2}).

The  Feller property and stochastic continuity of $P_t$ follow immediately from the well-posedness properties of (\ref{p-1}) as recalled in Proposition \ref{9-1}.

Therefore, by (\ref{iii-2}) and the arguments above,    the conditions of  Theorem \ref{s-3} hold  for $P_t$ and   we finish the proof of   (\ref{p-35}) and (\ref{p-36}).

\end{proof}

\appendix


\end{document}